\renewcommand{\dots}{\ifmmode\mathinner{\ldotp\kern-0.2em\ldotp\kern-0.2em\ldotp}\else.\kern-0.13em.\kern-0.13em.\fi}
\newtheorem{theorem}{Theorem}
\newtheorem{lemma}{Lemma}
\newtheorem{proposition}[lemma]{Proposition}
\newtheorem{remark}[lemma]{Remark}
\definecolor{darkgreen}{rgb}{0.1,0.7,0.1}
\definecolor{darkred}{rgb}{0.7,0.1,0.1}
\definecolor{darkblue}{rgb}{0.1,0.1,0.7}
\newcommand{\E}{\mathbb{E}}
\renewcommand{\P}{\mathbb{P}}
\newcommand{\bP}{\mathbf{P}}
\newcommand{\bX}{\mathbf{X}}
\newcommand{\bbE}{\mathbb{E}}
\newcommand{\bbN}{\mathbb{N}}
\newcommand{\bbP}{\mathbb{P}}
\newcommand{\bbR}{\mathbb{R}}
\newcommand{\bbZ}{\mathbb{Z}}
\newcommand{\cA}{\mathcal{A}}
\newcommand{\cB}{\mathcal{B}}
\newcommand{\cC}{\mathcal{C}}
\newcommand{\cF}{\mathcal{F}}
\newcommand{\cG}{\mathcal{G}}
\newcommand{\cL}{\mathcal{L}}
\newcommand{\cP}{\mathcal{P}}
\newcommand{\cR}{\mathcal{R}}
\newcommand{\cS}{\mathcal{S}}
\newcommand{\cT}{\mathcal{T}}
\newcommand{\Beta}{\mathrm{Beta}}
\newcommand{\ga}{\alpha}
\newcommand{\gd}{\delta}
\newcommand{\gep}{\varepsilon}       
\newcommand{\gG}{\Gamma}
\newcommand{\gO}{\Omega}
\newcommand{\gl}{\lambda}
\newcommand{\ind}{\mathbf{1}}
\DeclareMathOperator{\gap}{\mathrm{gap}}
\newcommand{\lint}{\llbracket}
\newcommand{\rint}{\rrbracket}
\DeclareMathSymbol{\leqslant}{\mathalpha}{AMSa}{"36} 
\DeclareMathSymbol{\geqslant}{\mathalpha}{AMSa}{"3E} 
\DeclareMathSymbol{\eset}{\mathalpha}{AMSb}{"3F}     
\renewcommand{\leq}{\;\leqslant\;}                   
\renewcommand{\geq}{\;\geqslant\;}                   
\newcommand{\dd}{\,\text{\rm d}}             
\newcommand{\inftwo}[2]{\inf_{\substack{#1 \\ #2}}} 
\newcommand{\var}{{\rm Var}}
\newcommand{\cc}{\complement}
\newcommand{\eq}{\mbox{\tiny eq}}
\renewcommand{\tilde}{\widetilde}
\renewcommand{\geq}{\ge}
\renewcommand{\leq}{\le}
\begin{document}

\title{Mixing time of the adjacent walk on the simplex}

\author{Pietro Caputo}
\address{Department of Mathematics and Physics, Roma Tre University, Largo San Murialdo 1, 00146 Roma, Italy.}
\email{caputo@mat.uniroma3.it}
\author{Cyril Labb\'e}
\address{Universit\'e Paris-Dauphine, PSL University, Ceremade, CNRS, 75775 Paris Cedex 16, France.}
\email{labbe@ceremade.dauphine.fr}
\author{Hubert Lacoin}
\address{IMPA, Estrada Dona Castorina 110, Rio de Janeiro, Brasil.}
\email{lacoin@impa.br}

\pagestyle{fancy}
\fancyhead[LO]{}
\fancyhead[CO]{\sc{P.~Caputo, C.~Labb\'e and H.~Lacoin}}
\fancyhead[RO]{}
\fancyhead[LE]{}
\fancyhead[CE]{\sc{Mixing time of the adjacent walk on the simplex}}
\fancyhead[RE]{}

\date{\small{January 26, 2020}}

\begin{abstract}
By viewing the $N$-simplex as the set of positions of $N-1$ ordered particles on the unit interval, the adjacent walk  is the continuous time Markov chain obtained by updating independently at rate 1 the position of each particle with a sample from the uniform distribution over the interval given by the two particles adjacent to it. We determine its spectral gap and prove that both the total variation distance and the separation distance to the uniform distribution exhibit a cutoff phenomenon, with mixing times that differ by a factor $2$. The results are extended to the family of log-concave distributions obtained by replacing the uniform sampling by a symmetric log-concave Beta distribution. 

\medskip

\noindent
{\bf MSC 2010 subject classifications}: Primary 60J25; Secondary 37A25, 82C22.\\
 \noindent
{\bf Keywords}: {\it Spectral gap; Mixing time; Cutoff; Adjacent walk.}
\end{abstract}

\maketitle

\setcounter{tocdepth}{1}
\tableofcontents

\section{Introduction}
Randomized algorithms based on Markov chains are commonly used for sampling points uniformly at random in a convex body and to simulate other log-concave distributions in $N$-dimensional Euclidean space. The mixing time of the associated random walk in $\bbR^N$ is often known to be polynomial in $N$, see e.g.\ \cite{dyer1991random,lovasz2003hit}.  
In analogy with the  setting of Markov chains with discrete state space, where the theory seems to be more advanced, see e.g.\ the monographs \cite{aldous2002reversible,LevPerWil}, it is of interest  
to develop a finer analysis of the asymptotic growth  of mixing times in high dimensions.
%

In this paper we address that question for 
a specific model, namely the adjacent walk on the $N$-simplex. 
By viewing the $N$-simplex as the set of positions of $N-1$ ordered particles on the unit interval, this is the process obtained by updating independently at rate 1 the position of each particle with a sample from the uniform distribution over the interval given by the two particles adjacent to it. The process defines a Gibbs sampler for the uniform distribution over the simplex. 

The adjacent walk on the $N$-simplex has been previously analysed in \cite{RW05}, where the authors
proved upper and lower bounds on the mixing time that are tight up to constant factors; see also \cite{RW05b,Smith1,Smith2} for similar estimates in related models. A version of this model with open boundary conditions was introduced in~\cite{KMP82} to study the heat flow in a chain of one-dimensional oscillators.  

Here we determine the mixing time to leading order and we establish the so-called cutoff phenomenon for
the adjacent walk on the $N$-simplex.  We also show that the same results hold if the uniform distribution over the sampling interval is replaced by a symmetric log-concave Beta law, in which case the chain converges to a log-concave distribution over the simplex.

\subsection{Model and results}
It is convenient to replace the unit interval by the interval $[0,N]$, that is to rescale the $N$-simplex to the set $\gO_N$ of positions of $N-1$ ordered particles on the interval $[0,N]$ defined by
\begin{equation}\label{Eq:omegan}
 \gO_N:=\left\{ x=(x_1,\dots,x_{N-1})\in \bbR^{N-1}: 0\le x_1 \le \dots \le x_{N-1} \le N\right\}.
\end{equation}
We also set $x_0=0,x_N=N$. An element $x$ in $\gO_N$ can be viewed as a non-decreasing interface $k \mapsto x_k$ connecting $(0,0)$ to $(N,N)$: this viewpoint will be adopted throughout the article.
Given an initial condition $x\in \gO_N$, we consider the process 
$${\bf X}^x(t)=(X_1(t),\dots,X_{N-1}(t)),$$ 
where ${\bf X}(0)=x$, and independently for each $i$, 
at rate $1$, $X_i$ is resampled according to the uniform distribution on $[X_{i-1},X_{i+1}]$, where we use the convention  $X_0 := 0$ and $X_N := N$.

More formally, this is the continuous time Markov chain with generator  
\begin{equation}\label{Eq:Generator}
(\cL_N f)(x) = \sum_{i=1}^{N-1} \int_{0}^1 \left(f(x^{(i,u)}) - f(x)\right)du\;,
\end{equation}
where, given $u\in [0,1]$ and $i$, the transformation $x\mapsto x^{(i,u)}$ is defined by
$$x^{(i,u)}_j =\begin{cases}x_j \;,\quad & i\ne j \;,\\
u\, x_{i-1} + (1-u) x_{i+1}\;,\quad & i=j\,.\end{cases}$$
 
Let $\pi_N$ denote the uniform distribution over $\gO_N$. Noting that $\int_{0}^1 f(x^{(i,u)}) du$ coincides with the conditional expectation $\pi_N[f\,|\,x_j, j\neq i]$ shows that the generator is a finite linear combination  of orthogonal projections in $L^2(\Omega_N,\pi_N)$. In particular, $\cL_N$ is a bounded self adjoint operator, and the Markov chain is reversible 
with respect to $\pi_N$. 
Note that $\pi_N$ can be obtained by conditioning $N$ i.i.d.~exponential random variables of parameter $1$ to have a sum equal to $N$, and by taking the $x_i$'s to be their partial sums.

The generator is nonpositive definite and has the trivial eigenvalue zero associated to constant functions. Our first result is a characterization of the spectral gap, defined as the smallest nontrivial eigenvalue of $-\cL_N$.

\begin{proposition}\label{gapz}
For any $N\geq 2$, the spectral gap of the generator is given by 
\begin{equation}\label{deff1}
\gap_N = 1 - \cos\left(\frac{\pi}{N}\right),
\end{equation} 
and the
corresponding eigenfunction is 
\begin{equation}\label{deffn1}
f_N (x) = \sum_{k=1}^{N-1} \sin\left( \frac{\pi k}{N}\right) (x_k-k)\;.
\end{equation}
\end{proposition}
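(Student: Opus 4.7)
My plan is to prove the two inequalities $\gap_N \leq 1 - \cos(\pi/N)$ and $\gap_N \geq 1 - \cos(\pi/N)$ separately, the first being a direct eigenfunction computation and the second containing the real content.

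For the upper bound I would verify that $f_N$ is an eigenfunction. The key simplification is to pass to the shifted coordinates $y_k := x_k - k$ (so $y_0 = y_N = 0$) and to notice that since each update is affine in $x_i$, the generator $\cL_N$ preserves the space of linear functionals. A short calculation on $f(x) = \sum_{k=1}^{N-1} a_k y_k$, using $\int_0^1 u\,\dd u = 1/2$ together with $\tfrac{(i-1)+(i+1)}{2}=i$, gives $\cL_N f = \tfrac12\sum_j y_j(a_{j-1}-2a_j+a_{j+1})$ after summation by parts with the convention $a_0 = a_N = 0$. Thus on linear functionals $\cL_N$ acts as one-half of the discrete Dirichlet Laplacian on $\{1,\dots,N-1\}$ applied to the coefficient sequence. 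Its sine eigenvectors $a_k = \sin(\pi k/N)$, combined with the identity $\sin(\pi(j-1)/N)+\sin(\pi(j+1)/N) = 2\cos(\pi/N)\sin(\pi j/N)$, yield $\cL_N f_N = -(1-\cos(\pi/N))f_N$, while $\pi_N[f_N]=0$ follows from the reflection $x_k \mapsto N-x_{N-k}$ which preserves $\pi_N$ but flips the sign of $f_N$.

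For the matching lower bound I would seek to prove the Poincar\'e inequality $\mathrm{Var}_{\pi_N}(f) \leq (1-\cos(\pi/N))^{-1}\cE_N(f,f)$, with
\begin{equation*}
\cE_N(f,f) = \sum_{i=1}^{N-1}\pi_N\bigl[\mathrm{Var}_{\pi_N}(f\mid x_j,\,j\ne i)\bigr],
\end{equation*}
as already suggested by the fact noted in the excerpt that $\cL_N$ is a sum of orthogonal projections. A useful starting point is that $\cL_N$ preserves total polynomial degree, so its spectrum decomposes along the filtration by degree, and the upper-bound computation already identifies the full degree-one spectrum as $\{1-\cos(\pi k/N):1\le k\le N-1\}$. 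The main obstacle is to rule out smaller eigenvalues coming from higher-degree subspaces, where naive variance tensorisation fails because $\pi_N$ is not a product measure. I would attack this by moving to the gap coordinates $\xi_k := x_k-x_{k-1}$, under which $\pi_N$ becomes the Dirichlet$(1,\dots,1)$ distribution and the dynamics becomes a Kipnis--Marchioro--Presutti-type chain uniformly redistributing the mass $\xi_{i-1}+\xi_i$ between adjacent sites. The Dirichlet form then splits across adjacent pairs, each controlled by a one-site Poincar\'e inequality for the Beta$(1,1)$ law; reassembling these local bounds via the spectral decomposition of the discrete Dirichlet Laplacian on $\{1,\dots,N-1\}$ is expected to produce exactly the global constant $(1-\cos(\pi/N))^{-1}$ and match the linear eigenfunction $f_N$ found above.
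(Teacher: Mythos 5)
Your first half is fine and coincides with the paper's own computation: the generator preserves linear functionals, acts on their coefficients as one half of the discrete Dirichlet Laplacian, and the sine vector gives $\cL_N f_N=-(1-\cos(\pi/N))f_N$, whence $\gap_N\le 1-\cos(\pi/N)$. The genuine gap is in your lower bound. The sentence ``the Dirichlet form splits across adjacent pairs, each controlled by a one-site Poincar\'e inequality for the $\mathrm{Beta}(1,1)$ law; reassembling these local bounds \dots is expected to produce exactly the global constant'' is not an argument but a restatement of the problem: the Dirichlet form \emph{is already} the sum over sites of the conditional variances $\pi_N[\var_{\pi_N}(f\mid x_j,j\ne i)]$, so there is nothing left to ``control locally''; the entire content of the Poincar\'e inequality is the comparison of the \emph{global} variance with this sum, with the sharp constant. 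Moreover, all standard local-to-global schemes for conservative dynamics of KMP type (tensorisation, martingale/recursive decompositions, comparison with the mean-field chain) lose constant factors and yield only $\gap_N\ge cN^{-2}$ --- indeed this is exactly what the paper itself can prove for $\alpha\in(0,1)$ by comparison with the complete-graph dynamics, and it is why identifying the precise constant is the delicate point. Your remark that the degree filtration reduces the problem to ruling out small eigenvalues in higher-degree polynomial subspaces is correct, but that is precisely where your proposal stops.

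For comparison, the paper's route is quite different and does not go through a functional inequality at all. It first uses stability of polynomials to get pure point spectrum, then argues: if $g$ is a mean-zero polynomial eigenfunction with eigenvalue $\mu<1-\cos(\pi/N)$, then for small $\epsilon>0$ the function $f_N+\epsilon g$ is increasing, and since the semigroup preserves monotonicity (grand coupling) while $P_t(f_N+\epsilon g)=e^{-\gap_N t}f_N+\epsilon e^{-\mu t}g$ is asymptotically parallel to $g$, the limit $g$ must itself be non-decreasing. The FKG inequality (this is where $\alpha\ge1$, in particular $\alpha=1$, enters) then forces $\pi_N(\bar h_k g)=0$ for every coordinate, and a conditioning argument on the event $\{x_1\ge N-\epsilon\}$ shows that a non-decreasing, mean-zero $g$ with these orthogonality properties must vanish identically. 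If you want to complete your proposal, you would need either to reproduce an argument of this monotonicity/FKG type or to find a genuinely new proof of the sharp Poincar\'e inequality; the tensorisation sketch as written would not close.
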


We are interested in the time needed for the total variation distance to equilibrium, starting from the ``worst'' initial condition, to pass below some given threshold $\gep \in (0,1)$. When this time is, to leading order in $N$, insensitive to the choice of $\gep$, one speaks of a cutoff phenomenon. More precisely, we set
\begin{equation}\label{defdn} 
d_N(t) := \sup_{x\in\Omega_N} \| P_t^x - \pi_N \|_{TV}\;,
\end{equation}
where $P_t^x$ is the law of ${\bf X}^x(t)$, and for  $\mu,\nu$ probability measures on $\gO_N$, 
the total variation distance is given by 
$$
\|\mu-\nu\|_{TV}= \sup_{B\in \cB(\gO_N)} \left(\mu(B)-\nu(B)\right),
$$
the supremum ranging over all Borel subsets of $\gO_N$. 
For any $\gep \in (0,1)$, the mixing time is defined by
$$ T_N(\gep) := \inf\{t\ge 0: d_N(t) < \gep\}\;.$$
Our main result is the following.
\begin{theorem}\label{Th:Main}
For any $\gep \in (0,1)$,
$$ \lim_{N\to\infty} \frac{T_N(\gep)}{N^2 \log N} = \frac1{\pi^2}\;.$$
As a consequence, the sequence of Markov chains displays a cutoff phenomenon.
\end{theorem}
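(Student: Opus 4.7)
The plan is to prove matching lower and upper bounds of the form $T_N(\varepsilon) = (1+o(1)) N^2 \log N / \pi^2$, using the first eigenfunction identified in Proposition \ref{gapz} as the pivotal object. Cutoff is an automatic corollary of these leading-order expansions (the dependence on $\varepsilon$ is absorbed in the $o(1)$).

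For the lower bound I would apply Wilson's method with the test function $f_N$ from \eqref{deffn1}, taking the extremal initial condition $x^\star=(0,\dots,0)\in\Omega_N$. Since $f_N$ is an eigenfunction with eigenvalue $\gap_N$, we get exactly $\E_{x^\star}[f_N(\bX(t))] = e^{-\gap_N t} f_N(x^\star)$, and a direct computation gives $f_N(x^\star) = -\sum_{k=1}^{N-1} k\sin(\pi k/N) \sim -N^2/\pi$. Meanwhile, using the Dirichlet covariance structure $\mathrm{Cov}_{\pi_N}(x_k,x_\ell)\approx k(N-\ell)/N$ for $k\le\ell$, one finds $\mathrm{Var}_{\pi_N}(f_N)$ is of order $N^3$. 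A standard variance-along-the-trajectory estimate (typically obtained by differentiating $\mathrm{Var}_{x^\star}(f_N(\bX(t)))$ and using the carré du champ identity together with reversibility) shows this annealed variance is comparable to $\mathrm{Var}_{\pi_N}(f_N)$. Chebyshev's inequality applied to the distinguishing set $\{y:f_N(y)\le \tfrac12\E_{x^\star}[f_N(\bX(t))]\}$ then forces $d_N(t)\to 1$ whenever $t\le (1-\delta)N^2\log N/\pi^2$, since the mean displacement $e^{-\gap_N t}N^2$ dominates the stationary standard deviation $N^{3/2}$ exactly at the threshold $\gap_N t=\tfrac12\log N$.

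For the upper bound the strategy is a combination of a monotone coupling with an $L^2$-decay estimate. The adjacent walk admits a natural monotone coupling: two copies started from $x\le y$ (componentwise) can be coupled to remain ordered by reusing the same update fractions $u\in[0,1]$ at each site. It thus suffices to bound the time until the coupled top and bottom configurations $\top=(N,\dots,N)$ and $\bot=(0,\dots,0)$ agree in law, which itself can be controlled through the sup-norm of the interface difference. The plan is to split time into (i) a burn-in phase of length $O(N^2)$ using the polynomial estimates of \cite{RW05} to bring the law close to $\pi_N$ in a Sobolev-type sense, and (ii) a precision phase during which the spectral decay
\begin{equation*}
\|P^{x}_{t_0+s}/\pi_N - 1\|_{L^2(\pi_N)}^2 \leq e^{-2\gap_N s} \|P^{x}_{t_0}/\pi_N - 1\|_{L^2(\pi_N)}^2
\end{equation*}
is effective, after which a Cauchy--Schwarz step converts the $L^2$ bound into a bound on the total variation distance.

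The hard part will be the upper bound, and specifically matching the sharp constant $1/\pi^2$. A naive $L^2$ bound after burn-in pays a prefactor $\|h_{t_0}-1\|_{L^2}$ that is typically polynomial or even sub-exponential in $N$, and such a prefactor threatens to absorb part of the precious $\tfrac12\log N$ budget needed on top of $1/\gap_N$. To circumvent this, one most likely needs a more refined spectral decomposition: the rigidity of the higher Fourier modes $\sin(\pi k/N)$, $k\ge 2$, whose eigenvalues $1-\cos(\pi k/N)\gtrsim k^2/N^2$ are genuinely faster, can be exploited to show that after a time of order $N^2$ only the first mode carries non-negligible $L^2$ mass. One can then propagate that information through the precision phase and perform the $L^2$--to--TV conversion only on the nearly one-dimensional residual, which is exactly what yields the matching constant $1/\pi^2$ from the lower bound. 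Implementing this projection cleanly in a continuous-state setting, together with the interplay with the monotone coupling, is the main technical obstacle.
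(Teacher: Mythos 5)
Your upper bound is where the proposal breaks down, and the missing piece is precisely the heart of the theorem. After a burn-in of length $O(N^2)$ from a worst-case (point-mass) initial condition the chain is still at total variation distance close to $1$ from $\pi_N$, and its $\chi^2$-distance, when finite at all, is exponentially large in $N$; the bound $\|h_{t_0+s}-1\|_{L^2}\le e^{-\gap_N s}\|h_{t_0}-1\|_{L^2}$ then requires $s$ of order $N^3$, not $\tfrac{\log N}{2\gap_N}$, so the prefactor does not merely ``absorb part of the $\log N$ budget''--- it destroys the bound. Your proposed repair, that after time $O(N^2)$ only the first mode carries non-negligible $L^2$ mass, is false as stated: the spectrum of $\cL_N$ is not exhausted by the linear eigenfunctions $f_N^{(j)}$ (the eigenfunctions are polynomials of every degree), and from a point mass the overwhelming part of the $\chi^2$-mass sits on those non-linear directions until essentially the mixing time itself; no mechanism is offered to prove the ``nearly one-dimensional residual'' claim. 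Moreover, the coupling half of your plan is also shaky: the monotone grand coupling of Proposition~\ref{grandgradient} never coalesces ($\tau=\infty$ a.s.), and even the Randall--Winkler coalescing coupling only yields $CN^2\log N$ with a large constant. The paper's proof of the sharp upper bound is of a completely different nature: it first shows that the extremal start $\wedge$ equilibrates by time $(1+\delta)\tfrac{\log N}{2\gap_N}$ via an adaptation of the Peres--Winkler censoring inequality to singular initial laws, FKG, and a two-scale ``special particles'' argument (Proposition~\ref{Prop:AbsCont}), and then couples an arbitrary start to $\bX^\wedge$ with a coupling that maximizes sticking probabilities, driving the area between the two interfaces through a cascade of thresholds by supermartingale and angle-bracket estimates in an extra time $O(N^2)$ (Proposition~\ref{Prop:Coalesce}). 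Nothing in your sketch substitutes for this.

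On the lower bound you follow the paper's route (Wilson's method with the eigenfunction $f_N$ of Proposition~\ref{gapz}), but the variance step you call ``standard'' is exactly the delicate point, and your choice of the extremal initial condition $x^\star=(0,\dots,0)$ makes it harder, not easier. The carr\'e du champ of $f_N$ along the trajectory is controlled by $\sum_k \eta_k(s)^2$, which at time $0$ equals $N^2$ (one gap of size $N$), and the only free bound is $\sum_k\eta_k(s)^2\le N^2$ for all $s$; plugging this into the bracket integral gives $\var[f_N(\bX(t))]\lesssim N^2/\gap_N\asymp N^4$, which is useless at the relevant time. To reach $O(N^3)$ you must prove that the squared gradients relax to $O(N)$ well before time $t$, and the paper's device for this --- dominating the increments, in the order $\succcurlyeq$, by an i.i.d.\ stationary-type configuration with uniformly bounded second moments, which is why it starts from a half-spread random initial condition rather than an extremal one --- is unavailable for $x^\star$, since any configuration dominating it in the gradient order must itself contain a gap of size $N$. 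So this step needs a genuine argument that you have not supplied.
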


In view of Proposition \ref{gapz}, the above theorem can be restated as
$$  T_N(\gep) \stackrel{N\to \infty}{\sim} \frac{\log N}{2 \gap_N}.$$
Remark that 
if $f_N$ is as in \eqref{deffn1} and we start with the extremal initial condition $x_i\equiv N$, then $t=\frac{\log N}{2 \gap_N}$ is exactly the time it takes for the expected value 
$$
\bbE\left[f_N({\bf X}^x(t))\right]=f_N(x)\,e^{-\gap_Nt}\,,
$$
to drop from the initial value $f_N(x) = \Theta(N^2)$ to a value $\Theta(N^{3/2})$, which is the typical size of fluctuations of $f_N$ at equilibrium.  

Strikingly, Proposition \ref{gapz} and Theorem \ref{Th:Main} take the exact same form in the case of the symmetric simple exclusion process on the $N$-segment; see \cite{Wil04,Lac16}. 

\subsection{Generalization to Beta-resampling}
Given $\alpha\in (0,\infty)$,  the symmetric Beta distribution of parameter 
$\alpha$ is the probability measure on $[0,1]$ with density
\begin{equation}\label{Eq:Beta} 
\rho_{\alpha}(u):=\frac{\gG(2\alpha)}{\gG(\alpha)^2} [u(1-u)]^{\alpha-1}.
\end{equation}
We define a generalization of the process described in \eqref{Eq:Generator} by resampling points according to a Beta distribution
\begin{equation}\label{Eq:Generator2}
(\cL_{N,\alpha} f)(x) = \sum_{i=1}^{N-1} \int_{0}^1 \left(f(x^{(i,u)}) - f(x)\right)\rho_{\alpha}(u)du
\;.
\end{equation}
While $\rho_{\alpha}(u)du$ could be replaced by any probability on $[0,1]$, the   Beta laws are  the only resampling laws that make the dynamics reversible with respect to probability measures with a product structure; see Remark \ref{rem:betarev} below. The associated equilibrium distribution is given by 
\begin{equation}\label{Eq:pinalpha}
 \pi_{N,\alpha}( \dd x):=\frac{\gG(N\alpha)}{\gG(\alpha)^N N^{N\alpha-1}}\prod_{i=1}^N (x_i-x_{i-1})^{\alpha-1} \dd x
\end{equation}
where $\dd x$ is Lebesgue's measure on $\gO_N$. Since $\int_{0}^1 f(x^{(i,u)}) \rho_{\alpha}(u)du$ equals the conditional expectation $\pi_{N,\alpha}[f\,|\,x_j, j\neq i]$, it follows that $\cL_{N,\alpha}$ is a bounded self adjoint operator in $L^2(\Omega_N,\pi_{N,\alpha})$. The following theorem extends the results of Proposition \ref{gapz} and Theorem \ref{Th:Main}.

\begin{theorem}\label{Th:Maingen}
For any $\ga\geq 1$, the spectral gap and the corresponding eigenfunction are given by
$$\gap_{N} = 1 - \cos\left(\frac{\pi}{N}\right)\;,\quad f_N (x) = \sum_{k=1}^{N-1} \sin\left( \frac{\pi k}{N}\right) (x_k-k)\;.$$
Moreover, for any $\gep \in (0,1)$ the $\gep$-mixing time associated with the Beta-resampling process satisfies
$$ \lim_{N\to\infty} \frac{T_{N,\alpha}(\gep)}{N^2 \log N} = \frac1{\pi^2}\;.$$
In particular, the sequence of Markov chains displays a cutoff phenomenon.
\end{theorem}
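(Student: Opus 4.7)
The plan is to extend Proposition \ref{gapz} and Theorem \ref{Th:Main} from the uniform case $\alpha=1$ to general $\alpha\geq 1$ by exploiting two structural features of the Beta resampling: the symmetry of $\rho_\alpha$ about $1/2$, which makes the action of $\cL_{N,\alpha}$ on linear functions insensitive to $\alpha$, and the log-concavity of both $\rho_\alpha$ and the equilibrium measure $\pi_{N,\alpha}$ for $\alpha\geq 1$.

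\textbf{Spectral gap.} Since $\rho_\alpha$ has mean $1/2$,
$$(\cL_{N,\alpha}\,x_i)(x)=\tfrac{1}{2}(x_{i-1}+x_{i+1})-x_i,\qquad 1\leq i\leq N-1,$$
so $\cL_{N,\alpha}$ acts on the linear coordinates as the discrete Dirichlet Laplacian on $y_k=x_k-k$, independently of $\alpha$. The $N-1$ linear combinations $f_N^{(j)}(x)=\sum_{k=1}^{N-1}\sin(\pi jk/N)(x_k-k)$ thus diagonalize $\cL_{N,\alpha}$ on this linear subspace with eigenvalues $-(1-\cos(\pi j/N))$, yielding the eigenfunction $f_N=f_N^{(1)}$ of the statement and the upper bound $\gap_N\leq 1-\cos(\pi/N)$. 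The matching lower bound would follow by adapting the argument of Proposition \ref{gapz}: since each summand in $\cL_{N,\alpha}$ is an orthogonal projection in $L^2(\pi_{N,\alpha})$, one can decompose $L^2(\pi_{N,\alpha})$ accordingly and reduce the problem to a one-dimensional variance inequality in which log-concavity of $\rho_\alpha$ provides the necessary control, ruling out eigenvalues smaller than $1-\cos(\pi/N)$ coming from non-linear test functions.

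\textbf{Upper bound on mixing.} Here I would mimic the strategy underlying Theorem \ref{Th:Main}. For $\alpha\geq 1$ the update $u\mapsto u\,x_{i-1}+(1-u)x_{i+1}$ admits a monotone grand coupling on $\gO_N$ that preserves the coordinate-wise partial order between configurations; starting two copies from the extremal profiles $x_k\equiv 0$ and $x_k\equiv N$, the difference is a linear combination of coordinates whose expectation evolves under the discrete heat equation with spectral gap $\gap_N$. Combined with a concentration estimate on $\pi_{N,\alpha}$, available by log-concavity when $\alpha\geq 1$, controlling the residual equilibrium fluctuations of the coordinates, this yields $T_{N,\alpha}(\gep)\leq (1+o(1))\log N/(2\gap_N)$.

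\textbf{Lower bound and principal obstacle.} For the lower bound I would employ $f_N$ as a distinguishing statistic from the extremal condition $x_k\equiv N$: one has $\bbE[f_N(\bX^x(t))]=f_N(x)\,e^{-\gap_N t}$ with $f_N(x)=\Theta(N^2)$, while the equilibrium variance $\var_{\pi_{N,\alpha}}(f_N)=\Theta(N^3)$ can be read off from the Beta-spacing representation of $\pi_{N,\alpha}$; Chebyshev then gives $T_{N,\alpha}(\gep)\geq (1-o(1))\log N/(2\gap_N)$. The main technical obstacle is the upper bound for $\alpha>1$: for $\alpha=1$ the analysis of Theorem \ref{Th:Main} can exploit the exact representation of $\pi_N$ as i.i.d.\ exponentials conditioned to sum to $N$, which is no longer available when $\alpha>1$. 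Extracting the sharp constant $1/\pi^2$, rather than merely the correct order $N^2\log N$, therefore demands an argument relying purely on log-concavity, most likely a careful monotone coupling together with a log-Sobolev or transport inequality for $\pi_{N,\alpha}$.
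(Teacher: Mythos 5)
Your identification of the linear invariant subspace is correct and matches the paper: since $\rho_\alpha$ is symmetric, $\cL_{N,\alpha}$ acts on the coordinates as $\tfrac12\Delta$, so the functions $f_N^{(j)}$ are eigenfunctions and $\gap_N\le 1-\cos(\pi/N)$ for every $\alpha>0$. But beyond this point the proposal has genuine gaps. For the matching lower bound on the gap, your suggestion to ``decompose $L^2(\pi_{N,\alpha})$ along the single-site projections and reduce to a one-dimensional variance inequality'' is not an argument that yields the exact constant: such decompositions (as in the comparison with the mean-field generator \eqref{Eq:GeneratorMF}) only give bounds of the right order $N^{-2}$, and the paper's mean-field example shows that the gap of a related dynamics is attained by a \emph{quadratic}, not a linear, eigenfunction — so the fact that no nonlinear eigenfunction beats $1-\cos(\pi/N)$ genuinely needs a proof. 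The paper's actual argument is quite different: it shows the spectrum is pure point on polynomials, then uses the order-preserving grand coupling to prove that any eigenfunction $g$ with eigenvalue strictly inside the gap must be non-decreasing (by following $P_t(f_N+\epsilon g)$, normalized, as $t\to\infty$), and then uses FKG together with a conditioning on the event $\{x_1\ge N-\gep\}$ to force $g\equiv 0$. Nothing resembling this appears in your sketch.

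The larger gap is the sharp upper bound on the mixing time, which is the bulk of the paper. Your plan — monotone grand coupling of the two extremal profiles, heat-equation decay of the expected difference, plus an equilibrium concentration estimate — does not work: in a continuous state space the monotone grand coupling never merges two trajectories (the paper points out that its coalescence time is a.s.\ infinite), and for $\alpha>1$ the paper explicitly notes that no simple monotone grand coupling achieves efficient coalescence of the extremal processes. Moreover, ``expected area is small plus equilibrium fluctuations are $O(N^{3/2})$'' does not bound total variation. The paper instead (i) proves, via a continuous-state adaptation of the Peres--Winkler censoring inequality (handling the singularity of $\delta_\wedge$ through the classes $\cS_k$), FKG, and a two-scale ``special particles'' argument, that the extremal process itself equilibrates by time $(1+\delta)\frac{\log N}{2\gap_N}$; and (ii) couples an arbitrary initial condition with the extremal process using a maximal-sticking (non-grand) coupling and a multi-stage supermartingale/hitting-time analysis of the area, with angle-bracket lower bounds relying on gradient estimates, to bring the area from $N^{3/2}$ to $0$ in negligible additional time. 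None of this is replaced by the log-Sobolev/transport inequality you gesture at. Finally, your lower bound via Chebyshev is incomplete as stated: besides the equilibrium variance you need $\var[f_N(\bX(t))]=O(N^3)$ \emph{along the dynamics} (Wilson's method), which the paper obtains through a martingale-bracket computation and a gradient-order comparison that requires choosing a flattened random initial condition rather than the extremal one you propose; with $x\equiv N$ the huge initial increment makes that comparison unavailable without further work.
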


\begin{remark}
 We have chosen $x_N:= N$ so that average inter-particle spacing at equilibrium is one. However, this convention has no influence on the result and we may take as well $x_N=1$ or any other constant (the only effect of this change being a dilation or contraction of space). In the course of the proof it is sometimes convenient to consider also the case of a random $x_N$ (since the process leaves $x_N$ fixed this makes the dynamics non-irreducible in that case).
\end{remark}

\begin{remark}
The restriction $\alpha\ge 1$ is due to the fact that certain parts of our proof require log-concavity of the probability density $\rho_{\alpha}$, which in turn implies the validity of the FKG property for the equilibrium measure $\pi_{N,\alpha}$, see Section \ref{sec:fkg}. However, we believe the result to be valid for all $\ga>0$.
\end{remark}

\begin{remark}
The spectrum of $\cL_{N,\alpha}$ restricted to the invariant subspace consisting of linear functions can be computed explicitly (see Section \ref{Sec:Eigen} below) and the fact that $\gap_{N,\alpha} = 1 - \cos\left(\frac{\pi}{N}\right)$ is equivalent to the statement that the spectral gap is attained within this subspace. 
As the following mean field example shows, this phenomenon is not  always to be expected for the exchange dynamics obtained by replacing the segment with another graph. Consider for instance the exchange process on the complete graph with generator 
\begin{equation}\label{Eq:GeneratorMF}
(\cG_{N,\alpha} f)(\eta) = \frac1N\sum_{1\leq i< j\leq N} \int_{0}^1 \left(f(\eta^{(i,j,u)})-f(\eta)\right) \rho_{\alpha}(u)du \,,
\end{equation}
where 
$$
\eta^{(i,j,u)}_k =\begin{cases}\eta_k \;,\quad & k\ne i,j \;,\\
u (\eta_{i} + \eta_{j})\;,\quad & k=i\;,\\
(1-u)(\eta_{i} + \eta_{j})\;,\quad & k=j\,,\end{cases}
$$
and  $\eta=\{\eta_k\}$ denotes the increment variables $\eta_k=x_k-x_{k-1}$, $k=1,\dots,N$. In other words, $\cG_{N,\alpha}$ defines the mean field version of \eqref{Eq:Generator2}. Notice that $\cG_{N,\alpha}$ is reversible w.r.t.\ $\pi_{N,\alpha}$. 
Using the arguments in \cite{CCL2003} or \cite{Cap08} one can show that for each $N\geq 2,\alpha>0$ the spectral gap of $ \cG_{N,\alpha}$ satisfies
\begin{equation}\label{Eq:GapMF8}
\gap(\cG_{N,\alpha})= 
\frac{\ga N +1}{(2\ga+1)N}\,,
\end{equation}
and that the corresponding eigenfunction is the symmetric quadratic function $$g_N(\eta) = {\rm const.}+\sum_{i=1}^N \eta_i^2.$$ 
\end{remark}

\subsection{Mixing time for the separation distance}

Another distance for which mixing time can be considered is the separation distance (see \cite{Gibbs02}). Its use in the study of mixing times for Markovian systems was initially advocated by Aldous and Diaconis 
because of its appealing relation to strong-stationary times \cite{AD87}.
While to our knowledge, separation distance has been considered so far only for countable state spaces, 
it can be generalized to a continuous setup via the definition
\begin{equation}\label{defseparation}
 d^{\,\mathrm{sep}}_{N,\alpha}(t):= 1- 
 \inftwo{A \in \cB(\gO_N)}{ B\in \cB(\gO_N) }\frac{ P_t(A,B)}{\pi_{N,\alpha}(A)\pi_{N,\alpha}(B)}
\end{equation}
where $$  P_t(A,B):= \int_{A} P^x_t(B)  \pi_{N,\alpha}(\dd x), $$
and
the infimum ranges over all Borel subsets of $\gO_N$ of positive Lebesgue measure. We can then define the separation mixing time associated with our chain as 
$$T^{\,\mathrm{sep}}_{N,\alpha}(\gep):= \inf\{ t\ge 0 \ : \  d^{\,\mathrm{sep}}_{N,\alpha}(t)<\gep\}.$$

\medskip

The separation mixing time is always larger than or equal to its total variation counterpart. It is known (cf. \cite[Lemma 19.3]{LevPerWil} in the discrete setup, and Section \ref{sec:sepa} below for an adaptation of the argument to the continuous case) that in the case of reversible chains, the separation mixing time is at most twice as large as that in total variation. Because of this factor $2$, cutoff in total variation does not necessarily imply that the same phenomenon holds for the separation distance (see \cite{HLP16} for counter-examples)

\medskip

We prove nonetheless that our dynamics displays a cutoff phenomenon for the separation distance and that the associated mixing times are twice as large as those for the total variation distance. 

\begin{theorem}\label{Th:Separation}
For all $\alpha\ge 1$ and every $\gep \in (0,1)$, the $\gep$-mixing time for the separation distance satisfies
$$ \lim_{N\to\infty} \frac{T^{\,\mathrm{sep}}_{N,\alpha}(\gep)}{N^2 \log N} = \frac{2}{\pi^2}\;.$$
\end{theorem}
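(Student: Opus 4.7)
The plan is to derive the standard reversibility-based inequality
\[d^{\,\mathrm{sep}}_{N,\alpha}(2t) \;\le\; 1 - (1 - d_{N,\alpha}(t))^2\]
by adapting the classical discrete-state proof of \cite[Lemma~19.3]{LevPerWil} to the continuous setting. The key identity, which uses reversibility of the semigroup with respect to $\pi_{N,\alpha}$, reads
\[\frac{P_{2t}(A,B)}{\pi_{N,\alpha}(A)\pi_{N,\alpha}(B)} \;=\; \bbE_{\pi_{N,\alpha}}\!\left[\frac{P^Y_t(A)}{\pi_{N,\alpha}(A)} \cdot \frac{P^Y_t(B)}{\pi_{N,\alpha}(B)}\right],\]
and the desired bound follows from elementary inequalities together with the controls $\int \min(P^Y_t(A)/\pi_{N,\alpha}(A),\,1)\,\pi_{N,\alpha}(dY)\ge 1 - d_{N,\alpha}(t)$ and its analogue for $B$. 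This adaptation is carried out in Section~\ref{sec:sepa}. Plugging in Theorem~\ref{Th:Maingen} at $t = T_{N,\alpha}(\delta)$ for $\delta$ small then yields $T^{\,\mathrm{sep}}_{N,\alpha}(\gep)\le (2+o(1))\,N^2\log N/\pi^2$ for every $\gep\in(0,1)$.

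\textbf{Lower bound.} Fix $\alpha' < 2$ and set $t_N = \alpha' N^2 \log N/\pi^2$. I would choose $A$ to be a small Euclidean ball around the extremal initial state $x^{\min}=(0,\dots,0)$ -- which has positive $\pi_{N,\alpha}$-mass by inspection of the density in \eqref{Eq:pinalpha} -- and reduce the ratio $P_{t_N}(A,B)/(\pi_{N,\alpha}(A)\pi_{N,\alpha}(B))$ to $P^{x^{\min}}_{t_N}(B)/\pi_{N,\alpha}(B)$ up to negligible continuity corrections. For $B$, the first eigenfunction $f_N$ from Proposition~\ref{gapz} serves as a discriminating observable: since $f_N$ is exactly an eigenvector,
\[\bbE_{x^{\min}}\bigl[f_N(\mathbf{X}(t_N))\bigr] \;=\; f_N(x^{\min})\,e^{-\gap_N t_N} \;\sim\; -\,N^{2-\alpha'/2}/\pi,\]
whereas $\bbE_{\pi_{N,\alpha}}[f_N]=0$ and $\mathrm{Var}_{\pi_{N,\alpha}}(f_N) = \sigma_N^2 \asymp N^3$. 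I would take $B = \{y\in\gO_N : f_N(y)\ge s_N\sigma_N\}$ for a threshold $s_N$ to be chosen. Moderate-deviation bounds would give $\pi_{N,\alpha}(B)\ge \exp(-(1+o(1))s_N^2/2)$ and $P^{x^{\min}}_{t_N}(B)\le \exp(-(1-o(1))(s_N + N^{1/2-\alpha'/2}/\pi)^2/2)$, whose ratio is dominated by $\exp(-c\,s_N\,N^{1/2-\alpha'/2})$; this tends to $0$ provided $s_N$ is chosen with $s_N\cdot N^{1/2-\alpha'/2}\to\infty$.

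\textbf{Main obstacle.} The technical heart lies in establishing matching moderate-deviation upper and lower tails for $f_N$ on the range $s \in [C, o(N^{1/2})]$ -- a window that goes beyond the CLT scale -- both under $\pi_{N,\alpha}$ and along the dynamic trajectory started at $x^{\min}$. I would exploit the linear representation
\[f_N(x) \;=\; \sum_{l=1}^{N-1} h_l\,(\xi_l - 1), \qquad \xi_l := x_l - x_{l-1}, \qquad h_l := \sum_{k=l}^{N-1}\sin(\pi k/N)=O(N),\]
and apply Cramér-type moderate-deviation asymptotics for weighted sums of $\Beta$-distributed log-concave random variables (this is where the assumption $\alpha\ge 1$ enters), handling the conditioning $\sum_l \xi_l = N$ via a standard tilting/desingularization argument. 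The matching concentration of $f_N(\mathbf{X}(t_N))$ under the dynamic follows from sub-Gaussian bounds driven by the spectral gap. The condition $\alpha'<2$ is precisely what permits a simultaneous choice of $s_N = o(N^{1/2})$ and $s_N\cdot N^{1/2-\alpha'/2}\to\infty$; together with the continuity reduction to $x^{\min}$, this yields the lower bound $T^{\,\mathrm{sep}}_{N,\alpha}(\gep)\ge (2-o(1))\,N^2\log N/\pi^2$ and completes the proof.
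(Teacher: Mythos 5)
Your upper bound is exactly the paper's argument: the reversibility/Cauchy--Schwarz adaptation of \cite[Lemma 19.3]{LevPerWil} carried out in Section \ref{sec:sepa} (the paper gets $d^{\,\mathrm{sep}}_{N,\alpha}(2t)\le 1-(1-2d_{N,\alpha}(t))_+^2$; your version without the factor $2$ is at best what one gets after extra work, but this is immaterial once Theorem \ref{Th:Maingen} is invoked). Your lower bound, however, follows a genuinely different route from the paper, and as written it has a gap that I do not see how to close. You want to conclude from $\pi_{N,\alpha}(B)\ge \exp(-(1+o(1))s_N^2/2)$ and $P^{x^{\min}}_{t_N}(B)\le \exp(-(1-o(1))(s_N+\mu_N)^2/2)$, with $\mu_N\asymp N^{(1-\alpha')/2}\to 0$, that the ratio vanishes. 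But the entire gain you are exploiting is $s_N\mu_N+\mu_N^2/2=o(s_N^2)$, while the unspecified $o(1)\cdot s_N^2$ errors in your two exponents are generically of larger order. To make the argument work you need the tail exponents on \emph{both} sides to agree with the Gaussian one up to relative precision $o(\mu_N/s_N)$, a polynomially small error. On the equilibrium side this already runs into the Cram\'er correction terms of order $s_N^3/\sqrt N$, which exceed $s_N\mu_N$ as soon as $\alpha'\ge 4/3$ unless they cancel against identical corrections on the dynamic side; and on the dynamic side the law of $f_N(\bX(t_N))$ started from $x^{\min}$ is not a sum of independent variables, so no such expansion is available off the shelf. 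In particular ``sub-Gaussian bounds driven by the spectral gap'' cannot suffice: they give an exponent $c\,s_N^2$ with some constant $c$, and any $c<1/2$ makes your ratio diverge rather than vanish. So the claimed range $\alpha'<2$ is not reached by the estimates you propose, and the main obstacle you flag is not merely technical but is where the proof currently fails.

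For comparison, the paper's proof of Proposition \ref{Prop:lowbosep} avoids sharp tail asymptotics altogether. It takes $A,B$ to be thin neighbourhoods $\bar\wedge,\underline\vee$ of the two extremal configurations, uses reversibility to rewrite $P_{t_0}(A,B)$ as a composition of two half-time kernels, and then uses the FKG inequality together with the fact (Lemma \ref{lem:muqi}, Section \ref{Sec:AbsCont}) that the time-$t_0/2$ law started from $\bar\wedge$ has an increasing density, to reduce everything to showing that starting near $\wedge$ the middle coordinate satisfies $X_{N/2}(t_0/2)\ge N/2$ with probability close to one. That statement only requires first and second moments of the Fourier coefficients $f_N^{(j)}(\bX(t))$ (Lemma \ref{Lemma:Wilson}), i.e.\ CLT-scale information; the factor $2$ comes from splitting $t_0$ into two halves via reversibility, not from probing moderate-deviation tails. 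If you want to salvage your tail-deficiency mechanism, you would have to prove genuinely sharp (Cram\'er-type, with matching correction terms) tail estimates for the non-equilibrium law of $f_N(\bX(t_N))$, which is a substantially harder problem than the one you are trying to solve.
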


Here again, our result is analogous to the one obtained for the symmetric simple exclusion process~\cite{Wil04,Lac16}.

\begin{remark}
Note that another natural definition of the separation distance in the continuous setup would be  
 \begin{equation}\label{defseparation2}
 d'_{N,\alpha}(t):= 1- 
 \inftwo{x\in \gO_N}{ B\in \cB(\gO_N) }\frac{ P_t(x,B)}{\pi_{N,\alpha}(B)}.
\end{equation}
Using explicit bounds on the total-mass of the singular part of the law at time $t$, see the proof of Lemma \ref{lem:mutn}, one can prove that the two distances have a cutoff phenomenon at the same time. Because of the regularity of our setting, we strongly believe that $d'_{N,\alpha}(t)= d^{\,\mathrm{sep}}_{N,\alpha}(t)$, however this is certainly not true for every reversible Markov chain.
\end{remark}

\subsection{Comments on the proof and related work}
The cutoff phenomenon is a widely studied topic in Markov chain theory, see \cite{diaconis1996cutoff,aldous2002reversible,LevPerWil} for an introduction. Thanks to recent remarkable efforts, many interesting examples are known of Markov chains exhibiting this particular type of phase transition. Unfortunately,  these seem to be mostly confined to the setting of Markov chains with discrete state space, see however \cite{HoughJiang17} for a recent exception. One of the reasons is possibly the fact that the analysis of total variation distance in a continuous state space is more demanding.  

\medskip
Let us briefly describe the main arguments used to establish the results of this article. 
Concerning the spectrum, the first observation is that when restricted to the invariant subspace of  linear functions, the generator can be easily diagonalised. In particular, one finds that  
$f_N$ is an eigenfunction of $\cL_{N,\alpha}$ with eigenvalue $\cos(\pi/N)-1$ and therefore, for all $\ga>0$,  the spectral gap 
is at most $1-\cos(\pi/N)$. Proving that this is actually the spectral gap is more delicate. We prove it only in the case $\ga\geq 1$ since we rely on the FKG inequality.
In this case, using the strict monotonicity of $f_N$, we show that any eigenfunction associated to an eigenvalue larger than $\cos(\pi/N)-1$ needs to be non-decreasing as well, and the FKG inequality is then applied to conclude that the only such eigenfunction is the constant one; see Section \ref{Sec:Eigen} below. Concerning the case $\alpha\in(0,1)$, we can only assert that the spectral gap is not smaller than $c N^{-2}$ for some constant $c=c(\alpha)>0$. Indeed, this is a direct consequence of  the mean field spectral gap \eqref{Eq:GapMF8} and a comparison argument (see e.g.\ \cite{Cap08}). 
%

\medskip
To establish the lower bound on the mixing time of Theorem \ref{Th:Maingen}, we use Wilson's method~\cite{Wil04}. For $\ga=1$, this was already used in \cite{RW05} to obtain the weaker lower bound
$$T_N(1/4)\geq c\,N^2\log N\;,$$
for some constant $c>0$. Here we sharpen this,  and obtain for any $\ga>0$ and for all $\gep>0$ $$T_{N,\alpha}(\gep)\geq \frac1{2\gap_N}(\log N - C_{\ga,\gep}),$$
see Proposition \ref{loow} below: note that we prove this result also for $\ga \in (0,1)$, but in this case $\gap_N$ is not defined as the spectral gap but simply as being $1-\cos(\pi/N)$. This suggests a cutoff window of order $N^2$, but we do not have a  corresponding upper bound. The proof of  the above lower bound is based on  Wilson's method with a careful choice of the initial condition $x$ and a comparison argument which allows us to control the variance of $f_N({\bf X}^x(t))$; see Section \ref{sec:lower}.
 
\medskip 
The proof of the upper bound on the mixing time of Theorem \ref{Th:Maingen} is the most involved part of the paper, and is worked out in Section \ref{sec:ub} and Section \ref{Sec:AbsCont}. Our strategy can be roughly outlined as follows; see Section \ref{sec:upperbound} below for a more detailed overview. 
In \cite{RW05}, and for $\alpha = 1$, it was shown that 
\begin{equation}\label{Eq:roughUB}
T_{N,\alpha}(1/4)\leq C N^2\log N,
\end{equation}
for some constant $C > 0$. This upper bound was obtained by estimating, under some monotone grand coupling, the hitting time of $0$ for the area comprised between the two extremal processes, that is, the processes starting from the highest ($x_1=\ldots=x_{N-1} = N$) and the lowest ($x_1=\ldots=x_{N-1} = 0$) initial conditions. Note that under such a coupling this hitting time bounds from above the coalescing time starting from any two initial conditions. The proof consisted of two steps that we now briefly recall. First, using the decay of the heat equation solved by the expectation of the area, one can show that after a time $C(\delta) N^2 \log N$, the area lies below $N^{-\delta}$ with large probability. Choosing $\delta$ large enough, the second step relies on a brute force argument that shrinks the area to $0$ in a time of order $\log N$ with large probability. In Appendix \ref{App:roughUB}, we give a proof of \eqref{Eq:roughUB} in the general case $\alpha > 0$: in contrast with the case $\alpha = 1$, there is no simple monotone  grand coupling for a general $\alpha$ that achieves an efficient coalescing time of the two extremal processes.
 We are then lead to controlling the coalescing time of the  stationary process and the process starting from some arbitrary initial condition.

The constant $C$ of \eqref{Eq:roughUB} is dictated by the smallest value $C(\delta)$ that one can take in the aforementioned strategy: since $\delta$ needs to be very large for the second step to apply, this value is much larger than desired. Our main contribution consists in introducing a sequence of intermediate steps that reduce the time necessary to bring the area to a small enough threshold from which a brute force argument can be applied. Namely, we use the first step above up to the target mixing time, which guarantees a shrinking of the expectation of the area down to $N^{3/2}$ (corresponding to $\delta = - 3/2$), and then present a coupling under which, through a sequence of successive stages, we are able to bring the area from $N^{3/2}$ down to $N^{-1}$ within a time of order $N^2$ with large probability, that is, a time negligible compared to the target mixing time. The last step is then a (slightly different) brute force argument that shrinks the area to $0$ with large probability within a time of order $\log N$. This program is carried out in Section \ref{sec:ub}. 

At a technical level, we use estimates on the derivative of the predictable bracket process associated to the evolution of the area together with diffusive estimates on supermartingales in order to upper bound the time needed for the area to descend through the intermediate values mentioned above. 
A key ingredient of these hitting time estimates, is the control of the gradients of the particle positions for the extremal processes, that is the process with maximal and minimal initial particle positions. The control of the gradients, in turn,  is obtained by coupling the extremal processes with the stationary distribution. To this end we need to establish, with an independent argument,  a sharp upper bound on the time needed for convergence to equilibrium of the 
two extremal processes.


The sharp upper bound for the extremal processes (by symmetry, we may  consider only the highest process) is performed in Section \ref{Sec:AbsCont}. This part of the proof is based on a strategy developed in the discrete setting by \cite{Lac16} for the adjacent transposition process. It relies on the FKG inequality and a version of the censoring inequality  from~\cite{PWcensoring}. Censoring inequalities are known to hold provided one has monotonicity of the density of the initial condition with respect to the invariant measure. While in the discrete setting this adapts well to the extremal process, in our context a nontrivial adaptation is required since the initial condition is singular with respect to the invariant measure.
%

%
%

\section{Preliminaries}
Let us consider a larger state-space where there is no constraint on $x_N$ 
\begin{equation}
 \gO^+_N:=\{ x=(x_1,\dots,x_{N})\in \bbR^{N} \ : \ 0\le x_1 \le x_2 \le \dots \le x_{N} \}.
\end{equation}
The generator $\cL_{N,\alpha}$ defines a dynamics also on this enlarged space.
If we let $$(\eta_i(x))^N_{i=1}:=(x_i-x_{i-1})_{i=1}^N$$ denote the increments of $x$, 
one can easily check that the distribution under which the increments $\eta_i$ are i.i.d.\
random variables with distribution $\Gamma(\alpha, \gl)$, $\gl>0$ being an arbitrary parameter, is a reversible measure for the dynamics. Here $\Gamma(\alpha, \gl)$ stands for the Gamma distribution with mean $\alpha / \gl$ and variance $\alpha / \gl^2$.

\begin{remark}\label{rem:betarev}
We remark that the Beta resampling rule \eqref{Eq:Beta} is a natural choice in our context. Indeed, suppose we define a generator as in \eqref{Eq:Generator2} with $\rho_\ga(u)du$ replaced by another probability $\nu(du)$ on the unit interval.
Then one can check that if $\mu$ is a probability measure on $\gO^+_N$ under which the increments $\eta_i$ are independent, the generator is reversible with respect to $\mu$ if and only if $\mu$ is the product of $\Gamma(\ga,\gl)$,  and $\nu(du)=\rho_\ga(u)du$, for some $\ga,\gl>0$. This rests on the well known fact that if $X,Y$ are two independent  random variables such that $X+Y$ and $X/(X+Y)$ are independent, then $X$ and $Y$ must have the gamma distribution, see \cite{lukacs1955}.
\end{remark}

%
%

\subsection{Monotone grand coupling}

There are two natural orders on $\gO^+_N$ associated with the dynamics.
The coordinate order 
\begin{equation}\label{coord}
x\ge x' \  \Leftrightarrow \ \left\{ \forall i \in \lint 1,N \rint, \  x_i\ge x'_i \right\},
\end{equation}
 and the gradient order 
 \begin{equation}\label{curlyorder}
 x\succcurlyeq x' \  \Leftrightarrow \ \left\{ \forall i \in \lint 1,N \rint, \  \eta_i(x)\ge \eta'_i(x') \right\}.
 \end{equation}
 Here and below we use the notation $\lint i,j\rint$ for the integer interval $[i,j]\cap\bbZ$. Note that the coordinate order is natural in both $\gO_N$ and $\gO_N^+$, while the gradient order is only relevant for the unconstrained space $\gO_N^+$. An important observation is that the Beta resampling dynamics preserves both orders in the following sense.

\begin{proposition}[Existence of a grand coupling] \label{grandgradient}
 For any $\alpha>0$, and $N\ge 2$, we can construct the trajectories $(\bX^x(t))_{t\ge 0}$ of the Markov chain on $\gO^+_N$ with generator $\cL_{N,\alpha}$ 
 on the same probability space in such a way that $\bbP$-a.s,
 for all $x,x'\in \gO^+_N$
 \begin{equation}
 \label{Eq:orders}
 \begin{split}
  x\ge  x' \  \Leftrightarrow \left\{ \forall t \ge 0,  \bX^x(t) \ge \bX^{x'}(t) \right\},\\
    x \succcurlyeq  x' \  \Leftrightarrow \left\{ \forall t \ge 0,  \bX^x(t) \succcurlyeq \bX^{x'}(t) \right\},\\
 \end{split}\end{equation}

\end{proposition}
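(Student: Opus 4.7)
The plan is to build a single probability space on which all trajectories $\bX^x$ live by sharing both the Poisson clocks and the resampling variables across initial conditions. Concretely, I would introduce, for each $i\in\lint 1,N-1\rint$, an independent rate-$1$ Poisson process $\cN_i=(T_k^i)_{k\geq 1}$ and an independent i.i.d.\ sequence $(U_k^i)_{k\geq 1}$ with density $\rho_\alpha$. At each time $T_k^i$ and for \emph{every} initial condition $x$, the $i$-th coordinate is updated by the deterministic rule
$$X_i^x(T_k^i) \;:=\; U_k^i\, X_{i-1}^x(T_k^{i-}) + (1-U_k^i)\, X_{i+1}^x(T_k^{i-}),$$
using the same realisation $U_k^i$ regardless of $x$. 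Each marginal is then the Markov chain with generator $\cL_{N,\alpha}$. Since the ``$\Leftarrow$'' directions in \eqref{Eq:orders} are trivial by evaluating at $t=0$, the content reduces to propagating each order across update events.

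The coordinate order is immediate to propagate: if $\bX^x(t^-)\geq \bX^{x'}(t^-)$ at an update time $t=T_k^i$, then coordinates $j\neq i$ are unchanged, and for $j=i$ the convex combination with coefficients $U_k^i,1-U_k^i\in[0,1]$ of the (ordered) neighbour values is order-preserving. For the gradient order I would use the fact that a resampling at site $i$ affects only the two increments $\eta_i$ and $\eta_{i+1}$, which are transformed into
$$\eta_i^{\mathrm{new}}=(1-U_k^i)\,(\eta_i+\eta_{i+1}),\qquad \eta_{i+1}^{\mathrm{new}}=U_k^i\,(\eta_i+\eta_{i+1}).$$
Both are nonnegative multiples of $\eta_i+\eta_{i+1}$ with coefficients that depend only on $U_k^i$, so if $\eta_j(x)\geq \eta_j(x')$ for every $j$ just before the update, the same inequality survives at indices $i$ and $i+1$ after the update; all other increments remain untouched.

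An induction on the almost surely discrete sequence of jump times of $\bigcup_i \cN_i$ then propagates both monotonicities from $t=0$ to every $t\geq 0$, since the configurations are constant between jumps. I do not expect any substantial obstacle: the one thing to check carefully is that this construction indeed defines a well-posed process for every initial condition simultaneously, which is immediate because on any bounded time interval only finitely many clocks ring and the update map is a measurable deterministic function of the pre-jump state and of $U_k^i$. The only mildly delicate point is to keep track of the fact that the gradient-preserving step uses the \emph{same} resampling variable $U_k^i$ across initial conditions, which is precisely the reason to share the $U_k^i$'s rather than resample them independently for each $x$.
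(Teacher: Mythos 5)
Your construction is exactly the graphical construction used in the paper: shared Poisson clocks $(\cT^{(k)}_i)$ and shared symmetric Beta variables $(U^{(k)}_i)$ applied identically to all initial conditions, with order preservation checked at each update (the paper leaves the inspection of the two orders to the reader, which you carry out correctly via the convex-combination argument for $\ge$ and the identities $\eta_i^{\mathrm{new}}=(1-u)(\eta_i+\eta_{i+1})$, $\eta_{i+1}^{\mathrm{new}}=u(\eta_i+\eta_{i+1})$ for $\succcurlyeq$). The proposal is correct and takes essentially the same approach as the paper.
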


\begin{proof}
 The coupling invoked above is the usual graphical construction (see e.g. \cite{Liggettbook}). To each $k\in\lint 1, N-1\rint$ we associate a Poisson clock process $(\cT^{(k)}_i)_{i\ge 1}$ whose increments are i.i.d.\ rate one exponentials,
 and a sequence $(U^{(k)}_i)_{i\ge 1}$ of i.i.d.\ symmetric Beta variables of paramenter $\alpha$. 
 Then, for all $x\in\gO^+_N$,  $(\bX^x(t))_{t\ge 0}$ is chosen to be càd-làg and 
 constant outside of the update times $(\cT^{(k)}_i)_{k\in \lint 1, N-1\rint, i\ge 1}$.
 At time $t=\cT^{(k)}_i$, if $U^{(k)}_i=u$  the $k$-th coordinate is updated as follows
 $$ X_k^x(t)= u X_{k-1}^x(t_{-})+(1-u) X_{k+1}^x(t_{-}) \text{ and } X_j^x(t):= X^x_j(t_{-}) \text{ for } j\ne k.$$ 
 One then checks by inspection that the process generated in this manner is the desired Markov chain. Moreover, the above construction implies that it preserves the two above mentioned orders in the sense of \eqref{Eq:orders}.
\end{proof}

Let us finally introduce the maximal $\wedge$ and minimal $\vee$ configurations for the coordinate order $\ge$ that we defined above:
\begin{equation}\label{Eq:WedgeVee}
\wedge_k = N\;,\quad \vee_k = 0\;,\quad \forall k\in \lint 1,N-1\rint\;.
\end{equation}

\subsection{The FKG inequality}\label{sec:fkg}

When $\alpha\ge 1$, the equilibrum measure 
$\pi_{N,\alpha}$ can be interpreted as a one dimensional gradient field associated with a convex potential.
More precisely  we can write

$$\pi_{N,\alpha}(\dd x):= C(N,\alpha) \exp\left(-\sum_{k=1}^{N} V(x_{i}-x_{i-1}) \right) \dd x,$$
where $V$ is the convex potential
\begin{equation}
V(u)= \begin{cases} +\infty \quad &\text{ if } u\le 0,\\
              - (\alpha-1) \log u \quad  &\text{ if } u>0.
             \end{cases}
\end{equation}
Below we use a standard application  of Holley's criterion (see \cite{Preston}) to show that $\pi_{N,\alpha}$ 
satisfies the so-called FKG inequality which entails positive correlation between increasing functions.

\medskip

We say that $f$ defined on $\gO_N$ is increasing if 
\begin{equation}\label{eq:increas}
 \forall x,x'\in \gO_N, \quad x\ge x' \Rightarrow f(x)\ge f(x').
 \end{equation}
A subset $A\subset \gO_N$ is said to be increasing if $\ind_A$ is an increasing function.
We let $\wedge$ and $\vee$ denote the following operations in $\gO_N$
\begin{equation}\label{minimax}
(x\vee x'):=(\max(x_i,x'_i))_{i=1}^{N-1} \;\text{  and  } \; (x\wedge x'):=(\min(x_i,x'_i))_{i=1}^{N-1}. 
\end{equation}
(Note that there is a little clash of notation with the maximal and minimal configurations introduced in \eqref{Eq:WedgeVee}: however we believe that this will never raise any confusion in the sequel).
\begin{proposition}[FKG inequality]\label{prop:fkg}
For any $\alpha\ge 1$ and $N\ge 2$, 
if $f$ and $g$ are increasing then 
$$\pi_{N,\alpha}(fg)\ge \pi_{N,\alpha}(f)\pi_{N,\alpha}(g).$$
Moreover, the inequality remains valid if $\pi_{N,\alpha}$ is replaced by its restriction $\pi_{N,\alpha}(\cdot|A)$ to any set $A$ which is stable under the operations $\vee$ and $\wedge$.
\end{proposition}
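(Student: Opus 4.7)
My plan is to verify Holley's criterion for the density of $\pi_{N,\alpha}$, which reduces the FKG inequality to the pointwise lattice condition
\begin{equation}\label{eq:fkg-plan-1}
H(x \vee x') + H(x \wedge x') \le H(x) + H(x'), \qquad \forall\, x, x' \in \gO_N,
\end{equation}
where $H(x) = \sum_{k=1}^{N} V(x_k - x_{k-1})$; the classical Holley coupling then delivers the FKG inequality from \eqref{eq:fkg-plan-1}, see \cite{Preston}. I would first observe that $\gO_N$ is itself stable under $\vee$ and $\wedge$, since the coordinatewise max and min of two non-decreasing sequences remain non-decreasing, so both sides of \eqref{eq:fkg-plan-1} are well-defined.

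Since $H$ decouples into a sum over consecutive pairs $(x_{k-1}, x_k)$, and the same is true of the left-hand side of \eqref{eq:fkg-plan-1}, it suffices to establish, for each $k$, the pointwise inequality
\begin{equation}\label{eq:fkg-plan-2}
V\bigl(\max(b,b') - \max(a,a')\bigr) + V\bigl(\min(b,b') - \min(a,a')\bigr) \le V(b-a) + V(b'-a'),
\end{equation}
valid for all $0 \le a \le b$ and $0 \le a' \le b'$, with $a = x_{k-1},\, b = x_k,\, a' = x'_{k-1},\, b' = x'_k$. If $(a,a')$ and $(b,b')$ are ordered in the same way, both sides coincide. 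In the remaining case, say $a \le a'$ and $b \ge b'$, setting $\delta = a'-a \ge 0$ and $\varepsilon = b-b' \ge 0$, the inequality \eqref{eq:fkg-plan-2} rewrites as
\begin{equation}\label{eq:fkg-plan-3}
V(b-a-\delta) + V(b-a-\varepsilon) \le V(b-a) + V(b-a-\delta-\varepsilon).
\end{equation}
The pairs $\{b-a,\, b-a-\delta-\varepsilon\}$ and $\{b-a-\delta,\, b-a-\varepsilon\}$ share the same sum, and the former majorizes the latter (its maximum is larger and its minimum is smaller); moreover, all four arguments are non-negative because $x \vee x'$ and $x \wedge x'$ still lie in $\gO_N$, hence in the domain where $V(u) = -(\alpha-1)\log u$ is convex (using $\alpha \ge 1$). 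Schur-convexity of $(s_1, s_2) \mapsto V(s_1) + V(s_2)$ then yields \eqref{eq:fkg-plan-3}, and summing over $k$ gives \eqref{eq:fkg-plan-1}.

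For the restriction to a subset $A \subset \gO_N$ stable under $\vee$ and $\wedge$, the density of $\pi_{N,\alpha}(\,\cdot\mid A)$ is proportional to $\ind_A(x)\,e^{-H(x)}$, and stability gives the log-supermodularity $\ind_A(x \vee x')\,\ind_A(x \wedge x') \ge \ind_A(x)\,\ind_A(x')$. Hence \eqref{eq:fkg-plan-1} continues to hold with $H$ replaced by $H - \log \ind_A$ (with the convention $-\log 0 = +\infty$), and Holley's argument carries over verbatim. I do not anticipate a real obstacle: the only step with any genuine content is the pointwise inequality \eqref{eq:fkg-plan-2}, whose proof is disposed of by the short majorization observation above.
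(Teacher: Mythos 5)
Your proposal is correct and follows essentially the same route as the paper: Holley's criterion applied to the density $e^{-H}$, with the lattice condition $H(x\vee x')+H(x\wedge x')\le H(x)+H(x')$ deduced from the convexity of $V$ (valid since $\alpha\ge 1$), and the restricted case handled by replacing $H$ with the potential $H_A$ that is $+\infty$ off the $\vee,\wedge$-stable set $A$. The only difference is that you verify the lattice inequality explicitly, termwise via the majorization/Schur-convexity argument, where the paper simply cites the standard reference (Giacomin, Appendix B1) for this consequence of convexity.
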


\begin{proof}
 Setting $H(x):=\sum_{k=1}^{N} V(x_{k}-x_{k-1})$, the density of 
 $\pi_{N,\alpha}$ with respect to Lebesgue is given by $e^{-H(x)}$ and 
 Holley's criterion shows that the inequality holds provided that 
 \begin{equation}\label{creat}
  H(x\vee x')+H(x\wedge x')\le  H(x)+H(x').
\end{equation}
The inequality \eqref{creat} can be deduced as a consequence of the convexity of $V$ see \cite[Appendix B1]{Giacomin}. For the measure restricted to a set $A$
it is sufficient to check that \eqref{creat} is valid when $H$ is replaced by
 \begin{equation}\label{defA}
  H_A(x):=\begin{cases} \sum_{k=1}^{N} V(x_{i}-x_{i-1}),  \quad &\text{if $x\in A$},\\
  +\infty, &\text{if $x\in A^{c}$},
                     \end{cases}
                     \end{equation}
                     which is an immediate consequence of \eqref{creat} under our assumption on $A$.
\end{proof}
Let us point out that the first inequality of Proposition \ref{prop:fkg} could be obtained directly from the monotonicity stated in Proposition \ref{grandgradient}, see for instance~\cite[Th 22.16]{LevPerWil}, and therefore does not require the convexity of the potential (that is, it holds also for $\alpha \in (0,1)$).

For two probability measures $\mu,\nu$ on $\Omega_N$, we write $\mu\leq \nu$ and say that $\mu$ is stochastically dominated by $\nu$ if for all increasing $f$ (in the sense of \eqref{eq:increas}) one has $\mu(f)\le\nu(f)$. 
Let us mention another application of Holley's criterion which we are going to use in the proof.

\begin{lemma}\label{lem:piapib}
 Let $A$ and $B$ be two increasing subsets of $\gO_N$ such that 
 $$\left\{ x\in A \ \text{ and } \  x'\in B\right\} \Rightarrow x \wedge x'\in B.$$
Then 
$$\pi_{N,\alpha}( \cdot \ | \ A)\geq \pi_{N,\alpha}( \cdot \ | \ B).$$
\end{lemma}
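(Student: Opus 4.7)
The plan is to apply Holley's criterion for stochastic domination in exactly the same spirit as was used to establish the restricted FKG inequality in Proposition \ref{prop:fkg}. Recall Holley's criterion: if $\mu_1$ and $\mu_2$ are probability measures on $\gO_N$ with densities $g_1$ and $g_2$ with respect to Lebesgue measure, a sufficient condition for $\mu_1 \geq \mu_2$ (stochastic domination) is
\[
g_1(x\vee x')\, g_2(x\wedge x') \;\ge\; g_1(x)\,g_2(x'), \qquad \forall x,x'\in\gO_N.
\]
I would apply this with the densities corresponding to $\pi_{N,\alpha}(\cdot\mid A)$ and $\pi_{N,\alpha}(\cdot\mid B)$, that is (up to normalization) $g_1 = \ind_A\, e^{-H}$ and $g_2 = \ind_B\, e^{-H}$, where $H(x)=\sum_k V(x_k-x_{k-1})$ is the convex Hamiltonian appearing in Section~\ref{sec:fkg}. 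Normalizing constants play no role in Holley's inequality since they cancel on both sides.

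The verification then reduces to checking, for each pair $x,x'$, the pointwise inequality
\[
\ind_A(x\vee x')\,\ind_B(x\wedge x')\, e^{-H(x\vee x')-H(x\wedge x')} \;\ge\; \ind_A(x)\,\ind_B(x')\, e^{-H(x)-H(x')}.
\]
If $x\notin A$ or $x'\notin B$ the right-hand side vanishes and there is nothing to prove. Otherwise $x\in A$ and $x'\in B$, and two things happen simultaneously: since $A$ is increasing and $x\vee x'\ge x\in A$, we get $x\vee x'\in A$; and the hypothesis on the pair $(A,B)$ gives exactly $x\wedge x'\in B$. Hence both indicators on the left-hand side equal $1$, and the inequality reduces to the energy bound $H(x\vee x')+H(x\wedge x')\le H(x)+H(x')$, which is the convexity inequality \eqref{creat} already invoked in the proof of Proposition~\ref{prop:fkg}.

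There is no real obstacle here: the only structural input beyond Holley's criterion is the observation that the asymmetric absorption property in the hypothesis combines with $A$ being increasing to supply exactly the two indicator membership facts needed after applying $\vee$ and $\wedge$. Once that is noted, the energy part is furnished verbatim by the same convexity argument used for the restricted FKG inequality, so the proof is essentially one line after Holley's criterion has been stated.
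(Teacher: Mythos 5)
Your proposal is correct and follows essentially the same route as the paper: both apply Holley's/Preston's domination criterion to the conditioned measures, whose (unnormalized) densities are $\ind_A e^{-H}$ and $\ind_B e^{-H}$, and reduce the required inequality to the convexity bound \eqref{creat} together with the membership facts $x\vee x'\in A$ (from $A$ increasing) and $x\wedge x'\in B$ (from the hypothesis). You merely spell out explicitly the indicator bookkeeping that the paper leaves as an immediate consequence of its definition of $H_A,H_B$ in \eqref{defA}.
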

\begin{proof}
 Let $\mu_A:=\pi_{N,\alpha}( \cdot \ | \ A)$
 and $\mu_B:=\pi_{N,\alpha}( \cdot \ | \ B)$.
 These probability measures have density proportional to 
 $\exp(-H_A(x))$ and $\exp(-H_B(x))$ respectively, where the potentials $H_A,H_B$ are defined as in \eqref{defA}. The result then directly follows from \cite[Proposition 1]{Preston} if one can show that for every $x,x'\in \gO_N$
\begin{equation}
  H_A(x\vee x')+H_B(x\wedge x')\le  H_A(x)+H_B(x').
\end{equation}
This in turn follows from the inequality \eqref{creat} for $H$ and our assumption on $A,B$.
 
\end{proof}

\subsection{Identification of the spectral gap 
}\label{Sec:Eigen}
Here we prove the first statement of Theorem \ref{Th:Maingen}.
Fix $\ga\geq 1$ and $N\geq 2$, and write $\cL$ for the generator $\cL_{N,\ga}$. 
Using the expression \eqref{Eq:Generator2}, the action of the generator on the coordinate map $h_k(x):=x_k$, $k=1,\dots, N-1$, is given by
\begin{equation}\label{genlinear}
(\cL h_k)(x) = \frac12 \Delta h_k(x)\;,
\end{equation}
where $\Delta$ denotes the discrete Laplace operator
$$ \Delta x_k := x_{k+1} - 2 x_k + x_{k-1}\;.$$
Summation by parts and \eqref{genlinear} 
then shows that for every $j\in \lint 1,N-1\rint$ the map
\begin{equation}\label{defj}
f_N^{(j)} (x) := \sum_{k=1}^{N-1} \sin\left( \frac{j \pi k}{N}\right) (x_k-k)\;,
\end{equation}
is an eigenfunction of $\cL$  with the eigenvalue $-\lambda_N^{(j)}$ where
$$ \lambda_N^{(j)} := 1 - \cos\left( \frac{j\pi}{N}\right)\;.$$
In the case $j=1$, we simply write $f_N$ for $f_N^{(1)}$ and $\gl_N$ for $ \lambda_N^{(1)}$. We now turn to   show that $\gl_N$ is actually the spectral gap.

It is not hard to check that for any $n\ge 1$, the set of all polynomials of degree at most $n$ in the variables $x_1,\ldots,x_{N-1}$ is stable under the action of the generator.
When restricted to any such set, the generator admits a finite complete  decomposition into eigenvalues / eigenfunctions. By density of polynomials in $L^2(\gO_N,\pi_{N,\alpha})$, there exists an orthonormal basis of polynomial eigenfunctions and therefore the generator has pure point spectrum in $L^2(\gO_N,\pi_{N,\alpha})$.

$\cL$ maps to zero all constant functions and any nontrivial eigenvalue of $\cL$ must be associated with an eigenfunction with mean zero. Therefore it is sufficient to show that if $g$ is a normalized polynomial eigenfunction such that $\pi_{N,\alpha}(g)=0$ and $\cL g=-\mu g$, with $\mu < \lambda_N$, then $g=0$. Since $g$ is polynomial, and since $f_N$ is strictly increasing in all its variables, there exists $\epsilon > 0$ such that $f_N + \epsilon g$ remains increasing in all its variables. Next, we define the following normalized function
$$ v_t := \frac{P_t (f_N + \epsilon g)}{\|P_t (f_N + \epsilon g)\|_2 }\;,$$
where $P_t=e^{t\cL}$ denotes the semigroup generated by $\cL$, and $\|\cdot\|_2$ is the $L^2(\gO_N,\pi_{N,\alpha})$-norm.
From our assumptions one has $\pi_{N,\alpha}(f_Ng)=0$ and 
$$
P_t (f_N + \epsilon g) = e^{-\gl_Nt}f_N + \epsilon \,e^{-\mu t}g\,,
$$ with $\mu<\gl_N$. It follows that $v_t \to g$ as $t\to\infty$ in $L^2(\gO_N,\pi_{N,\alpha})$. On the other hand, the semigroup preserves monotonicity by Proposition \ref{grandgradient}, so that $v_t$ is a non-decreasing function at any time $t$.
Thus, $g$ must be also non-decreasing. Notice that so far the argument is valid for any $\ga>0$. We shall now use the assumption $\ga\geq	1$. 
 
 The FKG inequality  and the orthogonality of $f_N$ and $g$  imply that the centered coordinate maps $\bar h_k(x):=x_k -k$ satisfy
$$
\pi_{N,\alpha}(\bar h_k g)=0,\qquad k\in\lint 1,N-1\rint.
$$
Indeed, by Proposition \ref{prop:fkg} one has $\pi_{N,\alpha}(\bar h_k g)\geq 0$ and if this is positive for some $k$, then $\pi_{N,\alpha}(f_N g)>0$.

Let $A_{\gep}$ be the event $ h_1\ge N-\gep$, with $\gep\in(0,1)$. Since both $A_\gep$ and $A_\gep^\cc$ are stable for the operations $\wedge$ and $\vee$ introduced in \eqref{minimax}, by Proposition \ref{prop:fkg} the restrictions $\pi_{N,\alpha}(\cdot \ | \ A_{\gep})$ and $\pi_{N,\alpha}\left(\cdot \ | \ A^\cc_{\gep}\right)$ satisfy the FKG inequality.
Therefore,
\begin{align}
 0=\pi_{N,\alpha}\left(\bar h_1g\right) &=\pi_{N,\alpha}\left(A_{\gep}\right)\pi_{N,\alpha}\left(\bar h_1 g \ | \ A_{\gep}\right)+\pi_{N,\alpha}\left(A_{\gep}^{\cc}\right)\pi_{N,\alpha}\left(\bar h_1 g \ | \ A_{\gep}^{\cc}\right)\nonumber\\
 &\ge \pi_{N,\alpha}\left(\bar h_1 \ind_{A_{\gep}}\right)\pi_{N,\alpha}\left(g \ | \ A_{\gep}\right)+ \pi_{N,\alpha}\left(\bar h_1 \ind_{A_{\gep}^{\cc}}\right)\pi_{N,\alpha}\left(g \ | \ A_{\gep}^{\cc}\right).
\label{Eq:fkgh}
\end{align}
The FKG inequality implies that both terms in the last expression are nonnegative: indeed, $A_\gep$ is increasing and therefore \begin{align*}
\pi_{N,\alpha}\left(\bar h_1 \ind_{A_{\gep}}\right)&\geq \pi_{N,\alpha}\left(\bar h_1\right) \pi_{N,\alpha}\left(A_{\gep}\right)=0\;,\\
\pi_{N,\alpha}\left(g \,\ind_{A_{\gep}}\right)&\geq \pi_{N,\alpha}\left(g\right) \pi_{N,\alpha}\left(A_{\gep}\right)= 0,\\
\pi_{N,\alpha}\left(\bar h_1 \ind_{A_{\gep}^{\cc}}\right)&\leq \pi_{N,\alpha}\left(\bar h_1\right) \pi_{N,\alpha}\left(A_{\gep}^{\cc}\right)=0\;,\\ \pi_{N,\alpha}\left(g \,\ind_{A_{\gep}^{\cc}}\right)&\leq \pi_{N,\alpha}\left(g\right) \pi_{N,\alpha}\left(A_{\gep}^{\cc}\right)= 0.
 \end{align*}
Hence the first term in the right hand side of \eqref{Eq:fkgh} must be zero. Since $\bar h_1 \ind_{A_{\gep}}\geq N-1-\gep>0$, this is possible only if $\pi_{N,\alpha}(g \, |  A_{\gep})=0$.
However, that and the continuity of $g$ imply that the extremal configuration $x_i\equiv N$ satisfies
$$g( N,\dots,N)= \lim_{\gep\to 0^+}\inf_{x\in A_{\gep}} g(x)=0 .$$
Hence $g(x)\leq 0$ for all $x\in\gO_N$ and therefore $g\equiv 0$.
\qed

\subsection{Absolute continuity}
Recall that under $\pi_{N,\alpha}$, the r.v.~$(\eta_k)_{k=1}^N$ sum up to $N$. At several places in the proofs, it will be convenient to deal with independent r.v.~instead. To that end, we use the following informal fact: for any $p\in (0,1)$, the law of $(\eta_1,\eta_2,\ldots,\eta_{\lfloor pN \rfloor})$ under $\pi_{N,\alpha}$ is uniformly over $N$ absolutely continuous w.r.t.~the law of $\lfloor pN \rfloor$ independent $\gG(\alpha,\alpha)$ r.v. The formal version is stated below:
\begin{lemma}\label{Lemma:AbsCont}
Fix $p\in (0,1)$. There exists a constant $C_p>0$ such that for all $N\ge 1$, writing $n=\lfloor pN\rfloor$, for any bounded measurable function $f:\bbR^n \to \bbR_+$ we have
$$ \pi_{N,\alpha}[f(\eta_1,\ldots,\eta_n)] \le C_p\,\nu_N[f(\eta_1,\ldots,\eta_n)]\;,$$
where $\nu_N$ is the law on $\Omega_N^+$ under which the $\eta_k$'s are IID~$\gG(\alpha,\alpha)$ r.v.
\end{lemma}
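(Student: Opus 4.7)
The plan is to identify $\pi_{N,\alpha}$ as the conditional law of $\nu_N$ given $\sum_{i=1}^N \eta_i = N$, use this to write the Radon--Nikodym derivative of the marginal of $(\eta_1, \ldots, \eta_n)$ under $\pi_{N,\alpha}$ with respect to that under $\nu_N$ as an explicit ratio of Gamma densities, and bound that ratio uniformly via Stirling's formula.

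To carry out the identification, let $g_m$ denote the density of the $\gG(m\alpha, \alpha)$ distribution; under $\nu_N$ the sum $\eta_1 + \cdots + \eta_N$ has density $g_N$. Writing out the conditional density of $(\eta_1, \ldots, \eta_{N-1})$ given $\sum_i \eta_i = N$ and noting that the factor $e^{-\alpha \sum_i \eta_i}$ collapses to a constant under this conditioning, one recovers exactly the density \eqref{Eq:pinalpha} of $\pi_{N,\alpha}$. Consequently, since $\eta_{n+1} + \cdots + \eta_N \sim \gG((N-n)\alpha, \alpha)$ is independent of $(\eta_1, \ldots, \eta_n)$ under $\nu_N$, a standard conditioning computation gives
\begin{equation*}
\frac{d\pi_{N,\alpha}^{(n)}}{d\nu_N^{(n)}}(\eta_1, \ldots, \eta_n) \;=\; \frac{g_{N-n}(N - s)}{g_N(N)} \, \ind_{\{s \le N\}}, \qquad s := \eta_1 + \cdots + \eta_n,
\end{equation*}
where the superscript $(n)$ denotes the marginal of the first $n$ coordinates. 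The lemma thus reduces to showing this Radon--Nikodym derivative is bounded, uniformly in $N$ and $\eta$, by a constant $C_p$.

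The main technical step is then to maximize this ratio. Setting $y = N - s \in [0, N]$, the explicit formula
\begin{equation*}
\frac{g_{N-n}(y)}{g_N(N)} \;=\; \frac{\gG(N\alpha)}{\gG((N-n)\alpha)\, \alpha^{n\alpha}} \cdot \frac{y^{(N-n)\alpha - 1}}{N^{N\alpha - 1}} \, e^{\alpha(N - y)}
\end{equation*}
shows that, as soon as $(N - n)\alpha \ge 1$ (automatic for $\alpha \ge 1$, and true in general once $N$ exceeds a threshold depending on $p, \alpha$), the supremum over $y \in [0, N]$ is attained at the mode $y^\ast = (N-n) - 1/\alpha$ of $g_{N-n}$. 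Substituting $y^\ast$ and applying Stirling's expansion $\gG(z) = \sqrt{2\pi/z}\,(z/e)^z (1 + O(1/z))$ to $\gG(N\alpha)$ and $\gG((N-n)\alpha)$, all $N$-dependent factors cancel after elementary manipulation, leaving
\begin{equation*}
\sup_{s \in [0, N]} \frac{g_{N-n}(N-s)}{g_N(N)} \;\xrightarrow[N \to \infty]{}\; \frac{1}{\sqrt{1-p}}.
\end{equation*}
This produces the desired bound $C_p$ for $N$ larger than some threshold; the finitely many smaller values of $N$ are handled by observing that the ratio is bounded on the compact set $[0, N]$, so the constant can be enlarged to accommodate them. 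The only real obstacle is keeping the Stirling bookkeeping clean enough that the cancellations are transparent, which is routine.
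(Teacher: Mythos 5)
Your argument is correct and is essentially the paper's proof: both identify $\pi_{N,\alpha}$ as $\nu_N$ conditioned on $\sum_i\eta_i=N$, write $\pi_{N,\alpha}[f(\eta_1,\dots,\eta_n)]=\nu_N\big[f(\eta_1,\dots,\eta_n)\,g_{N-n}(N-s)\big]/g_N(N)$ with $s=\eta_1+\dots+\eta_n$, and bound this weight uniformly by a quantity tending to $1/\sqrt{1-p}$ --- the only difference being that the paper quotes the local limit theorem for the centered Gamma densities where you locate the mode and apply Stirling, an interchangeable technicality. One caveat, shared with the paper's own ``tune $C_p$ for the first few $N$'' remark: your claim that the ratio is bounded on $[0,N]$ for small $N$ needs $(N-n)\alpha\ge 1$ (otherwise $g_{N-n}$ blows up at $0$), which is automatic for $\alpha\ge 1$, the regime in which the lemma is actually used.
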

\begin{proof}
Let $g(\cdot)$ be the density function of a centered Gaussian distribution of unit variance. Let $q_k$ be the density function of a \emph{centered} $\gG(k\alpha,\alpha)$ r.v. By the Local Limit Theorem~\cite[Th.VII.2.7]{Petrov}, we have
\begin{align*}
\lim_{N\to \infty}\sqrt{\frac{N}{\alpha}} q_N(0) - g(0) = 0,\\
\lim_{N\to \infty}\sup_{y\in\bbR} \Big|\sqrt{\frac{N-n_N}{\alpha}} q_{N-n_N}\Big(y\sqrt{\frac{N-n_N}{\alpha}}\Big) - g(y)\Big|=0\;.
\end{align*}
 Using the above together with the fact that $g$ is maximized at $0$, we have for all $N$ sufficiently large 
\begin{multline*}
\pi_{N,\alpha}[f(\eta_1,\ldots,\eta_n)]= \frac{\nu_N[f(\eta_1,\ldots,\eta_n) q_{N-n}(n-x_n)]}{q_N(0)}\le \frac{2 \nu_N[f(\eta_1,\ldots,\eta_n)]}{\sqrt{1-p}}  \;.
\end{multline*}
The result follows by tuning the value of $C_p$ to also include the first few values of $N$.
\end{proof}

\section{Lower bound on total variation distance}\label{sec:lower}
In this section we prove the lower bound on the mixing time
displayed in Theorems \ref{Th:Main} and \ref{Th:Maingen}.
We obtain in fact a more quantitative lower bound which is valid for all values $\alpha>0$. 

\begin{proposition}\label{loow}
 For any $\alpha>0$, for any $\gep\in(0,1)$ there exists $C_{\alpha,\gep}>0$
 such that for all $N\ge 2$
 \begin{equation}\label{Eq:lowbo}
  T_{N,\alpha}(1-\gep)\ge \frac{N^2}{\pi^2}\left(\log N-C_{\alpha,\gep}\right).
 \end{equation}
\end{proposition}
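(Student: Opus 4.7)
The plan is to apply Wilson's method~\cite{Wil04} with the eigenfunction $f_N$. Note that the identity $\cL_{N,\ga}f_N=-\gl_N f_N$ with $\gl_N:=1-\cos(\pi/N)$ holds for every $\ga>0$ by the linearity computation \eqref{genlinear}, independently of the spectral gap identification of Section \ref{Sec:Eigen}. I would start from the configuration $x^*\in\gO_N$ defined by
\[
x^*_k:=k+\tfrac{N-1}{\pi}\sin\!\big(\tfrac{\pi k}{N}\big),\qquad k=0,\dots,N.
\]
The gradients $\eta_k(x^*)=1+\tfrac{N-1}{\pi}[\sin(\pi k/N)-\sin(\pi(k-1)/N)]$ all lie in $(0,2)$, so $x^*\in\gO_N$, and a direct computation yields $f_N(x^*)=\tfrac{N-1}{\pi}\sum_{k=1}^{N-1}\sin^2(\pi k/N)=(N-1)^2/(2\pi)$. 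Since $f_N$ is an eigenfunction, $\bbE_{x^*}[f_N(\bX(t))]=e^{-\gl_N t}f_N(x^*)$ while $\pi_{N,\ga}(f_N)=0$.

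The main technical input is the uniform variance bound $V(t):=\mathrm{Var}_{x^*}[f_N(\bX(t))]\le C_\ga N^3$. From $\cL(f_N^2)=2f_N\cL f_N+\Gamma(f_N,f_N)$ and the eigenvalue equation one gets
\[
V(t)=\int_0^t e^{-2\gl_N(t-s)}\bbE_{x^*}[\Gamma(f_N,f_N)(\bX(s))]\dd s,
\]
and a short computation gives $\Gamma(f_N,f_N)(x)=\sum_{i=1}^{N-1}\sin^2(\pi i/N)\int_0^1((1-u)\eta_{i+1}(x)-u\,\eta_i(x))^2\rho_\ga(u)\dd u\le C_\ga\sum_i\eta_i(x)^2$. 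Hence the variance bound reduces to proving the uniform second moment estimate
\begin{equation}\label{sketchkey}
\sup_{s\ge 0}\bbE_{x^*}\Big[\sum_{i=1}^N\eta_i(\bX(s))^2\Big]\le K_\ga N,
\end{equation}
which immediately yields $V(t)\le C'_\ga N/\gl_N=O_\ga(N^3)$. The bound \eqref{sketchkey} is the main obstacle and constitutes the \emph{comparison argument}: since $\eta_k(x^*)$ is deterministically bounded, one couples $\bX^{x^*}$ with the dynamics on the enlarged space $\gO_N^+$ started from a suitably chosen product-Gamma configuration $y$ that dominates $x^*$ in the gradient order of Proposition \ref{grandgradient}; the absolute continuity of Lemma \ref{Lemma:AbsCont} together with the invariance of product-Gamma measures on $\gO_N^+$ (Remark \ref{rem:betarev}) then keeps the $y$-side second moments bounded by $O_\ga(N)$ uniformly in time.

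Once \eqref{sketchkey} is established, the stationary bound $\mathrm{Var}_{\pi_{N,\ga}}(f_N)=\bbE_\pi[\Gamma(f_N,f_N)]/(2\gl_N)=O(N^3)$ follows from the same carré du champ estimate applied under $\pi_{N,\ga}$. Setting $A_t:=\{y\in\gO_N:f_N(y)>\tfrac12 e^{-\gl_N t}f_N(x^*)\}$, Chebyshev's inequality gives
\[
P_t^{x^*}(A_t)\ge 1-\frac{4V(t)}{(e^{-\gl_N t}f_N(x^*))^2},\qquad \pi_{N,\ga}(A_t)\le\frac{4\,\mathrm{Var}_{\pi_{N,\ga}}(f_N)}{(e^{-\gl_N t}f_N(x^*))^2},
\]
and combining these with $f_N(x^*)\sim N^2/(2\pi)$,
\[
\|P_t^{x^*}-\pi_{N,\ga}\|_{TV}\ge P_t^{x^*}(A_t)-\pi_{N,\ga}(A_t)\ge 1-\frac{C''_\ga}{N\,e^{-2\gl_N t}}\,.
\]
The right-hand side is at least $1-\gep$ whenever $2\gl_N t\le\log N-\log(C''_\ga/\gep)$; since $\gl_N\le\pi^2/(2N^2)$, this is satisfied for $t\le\tfrac{N^2}{\pi^2}(\log N-C_{\ga,\gep})$ with $C_{\ga,\gep}:=\log(C''_\ga/\gep)$, proving \eqref{Eq:lowbo}.
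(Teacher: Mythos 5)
Your overall architecture is the same as the paper's: Wilson's method with the eigenfunction $f_N$ (valid for all $\ga>0$ via \eqref{genlinear}), a variance bound of order $N^3$ obtained from the carr\'e du champ / bracket of the associated martingale, reduction to a uniform-in-time bound on $\sum_i\bbE[\eta_i(\bX(s))^2]$, and a final Chebyshev argument; that last step, and the reduction $\Gamma(f_N,f_N)\le C\sum_i\eta_i^2$, are correct (up to the harmless slip $f_N(x^*)=N(N-1)/(2\pi)$, not $(N-1)^2/(2\pi)$).

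The genuine gap is exactly at the step you flag as ``the main obstacle'', estimate \eqref{sketchkey}, and the mechanism you propose for it cannot work with your deterministic initial condition $x^*$. Gradient-order domination $x^*\preccurlyeq y$ must hold almost surely for the coupling of Proposition \ref{grandgradient} to give $\bbE[\eta_k(\bX^{x^*}(s))^2]\le\bbE[\eta_k(\bX^{y}(s))^2]$; but a configuration $y$ with i.i.d.\ Gamma increments has, with probability tending to one, some increment below $1/2$, whereas a positive fraction of the increments of $x^*$ are of order one, so no ``product-Gamma configuration'' dominates $x^*$ a.s. If you repair this by conditioning the product measure on $\{\eta_i(y)\ge\eta_i(x^*)\ \forall i\}$, the conditioning event has probability exponentially small in $N$, so the law of $y$ is no longer comparable to the stationary product-Gamma law with a bounded constant (and it is not itself stationary), and the uniform-in-time bound on second moments evaporates; Lemma \ref{Lemma:AbsCont} is of no help here, since it concerns the marginal of the \emph{equilibrium} measure on a fraction of the coordinates, not domination of an initial condition. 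The paper avoids this precisely by randomizing the starting point: the increments are taken i.i.d.\ $\gG(\ga,\ga/2)$ on the first half, conditioned to sum to $N$ (and zero elsewhere), which---thanks to the independence of $\eta'_k/\sum_j\eta'_j$ from $\sum_j\eta'_j$ for Gamma variables---can be realized as a global rescaling of an i.i.d.\ Gamma vector conditioned on the single event $\sum_{j\le N/2}\eta'_j\ge N$, of probability at least $1/3$. This yields a.s.\ gradient domination by a process whose initial law is bounded by $3$ times a stationary product-Gamma law, whence $\sup_s\bbE[\eta_k(s)^2]=O(1)$ and the $N^3$ variance bound. To complete your proof you would either have to adopt such a randomized initial condition (at which point you are reproducing the paper's argument; note $\bbE[f_N(\bX(0))]\ge N^2/20$ there plays the role of your $f_N(x^*)$), or supply a genuinely new argument for \eqref{sketchkey} started from a deterministic configuration, which your sketch does not provide.
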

We are going to follow a variant of the method introduced by Wilson in \cite{Wil04}:
to obtain the lower bound \eqref{Eq:lowbo}, we select a specific test function $f$ and show that by time $t=\frac{N^2}{\pi^2}(\log N-C_{\alpha,\gep})$, the value $f({\bf X}(t))$ is far from the equilibrium value $\pi_{N,\alpha}(f)$ with large probability. This is achieved by picking a suitable initial condition and by evaluating the first two moments of $f({\bf X}(t))$. As in the case of the exclusion process, we select  $f=f_N$, the eigenfunction appearing in Proposition \ref{gapz}. As noted in the proof of that proposition, $f_N$ is an eigenfunction with eigenvalue $\gap_N$ for all $\ga>0$. 

\medskip

The initial condition $\bX(0)$ is defined as follows. When $\alpha=1$ we let $\eta_k, k=1,\ldots,N/2$ be\footnote{In this section, we write $N/2$ for $\lfloor N/2 \rfloor$ in order to alleviate notations.} i.i.d.\ exponential random variables with mean $2$ 
 conditioned on $\eta_1+\ldots+\eta_{N/2} = N$, and set $\eta_k = 0$ for all $k\in \lint N/2 +1,N\rint$. More generally, for arbitrary $\alpha >0$ we choose the distribution of $\eta_k$ for $k\le N/2$ to be i.i.d.\ $\gG(\alpha,\alpha/2)$, with the same conditioning on the sum, and set again $\eta_k = 0$ for all $k\in \lint N/2 +1,N\rint$. For the rest of this section, we let ${\bf X}(t)$ be the process starting from the random initial condition with increments $\eta_k$,
 \begin{equation}\label{eq:init}
 X_k(0) = \sum_{i=1}^k \eta_i\;,\quad k\in \lint 1,N\rint\;.
 \end{equation}
We shall use $\bbP,\bbE$ for the corresponding probability measure and expectation, and $\var$ for the associated variance.
For later use we also prove a result concerning the variance of other Fourier coefficients of $\bX(t)$, namely the functions $f_N^{(j)}$ from \eqref{defj}.
\begin{lemma}\label{Lemma:Wilson}
For the process described above with initial condition \eqref{eq:init},
there exists $C>0$ such that for all $t\ge 0$ and all $N$ large enough
$$ \bbE[f_N({\bf X}(t))] \ge C^{-1} N^2 e^{-\gap_N t}\,,\;\text{ and }\;  
\var[f_N(\bX(t))] \le C N^3\,,
$$
and for every $j\in \lint 2,N-1\rint$
$$ 
\var[f^{(j)}_N(\bX(t))] \le C j^{-2}N^3\,,
$$
\end{lemma}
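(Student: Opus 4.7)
The plan is to apply Wilson's moment method: by Section \ref{Sec:Eigen}, each $f_N^{(j)}$ is an eigenfunction of $\cL_{N,\alpha}$ with eigenvalue $-\lambda_N^{(j)}$, so $\bbE[f_N^{(j)}(\bX(t))] = e^{-\lambda_N^{(j)} t}\bbE[f_N^{(j)}(\bX(0))]$. For the mean bound, it suffices to estimate $\bbE[f_N(\bX(0))]$. By exchangeability of $\eta_1,\dots,\eta_{N/2}$ under the conditioning $\sum_{i\leq N/2}\eta_i=N$, each has mean $2$, hence $\bbE[X_k(0)-k]=k$ for $k\leq N/2$, while $X_k(0)-k=N-k$ deterministically for $k>N/2$. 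The symmetry $\sin(\pi k/N)=\sin(\pi(N-k)/N)$ then gives
$$\bbE[f_N(\bX(0))]= 2\sum_{k=1}^{N/2-1}\sin(\pi k/N)\,k + O(N),$$
and a Riemann-sum computation identifies the leading term as $2(N/\pi)^2\int_0^{\pi/2}u\sin u\,du = 2N^2/\pi^2$.

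For the variance bounds, I would use Wilson's differential identity: for any eigenfunction $f$ of $\cL_{N,\alpha}$ with eigenvalue $-\lambda$,
$$\var(f(\bX(t))) = e^{-2\lambda t}\var(f(\bX(0))) + \int_0^t e^{-2\lambda(t-s)}\,\bbE[\Gamma(f,f)(\bX(s))]\,ds,$$
where $\Gamma(f,f):=\cL(f^2)-2f\cL f$ is the carré du champ. Applied to $f_N^{(j)}$: write $f_N^{(j)}(\bX(0))=\sum_{i=1}^{N/2}c_i^{(j)}\eta_i+\text{const}$ with $c_i^{(j)}=\sum_{k=i}^{N/2-1}\sin(j\pi k/N)=O(N/j)$ by summing the oscillating series. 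Combined with $\var(\eta_i(\bX(0)))=O(1)$ (an analog of Lemma \ref{Lemma:AbsCont} applied to the initial law), this yields $\var(f_N^{(j)}(\bX(0)))=O(\sum_i(c_i^{(j)})^2)=O(N^3/j^2)$. A direct computation based on $x_i^{(i,u)}-x_i=(1-u)\eta_{i+1}-u\eta_i$ gives the pointwise estimate
$$\Gamma(f_N^{(j)},f_N^{(j)})(x) \leq C_\alpha\sum_{i=1}^{N-1}\sin^2(j\pi i/N)\bigl(\eta_i(x)^2+\eta_{i+1}(x)^2\bigr).$$

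The proof then reduces to the key uniform moment estimate $\bbE\bigl[\sum_{i=1}^N\eta_i^2(\bX(s))\bigr]\leq CN$ for all $s\geq 0$: granted this, $\bbE[\Gamma(f_N^{(j)},f_N^{(j)})(\bX(s))]=O(N)$, so the integral contributes at most $CN/(2\lambda_N^{(j)})=O(N^3/j^2)$, matching the initial variance.

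The uniform moment estimate is the main obstacle. It holds trivially at $s=0$ (by the explicit initial distribution) and at stationarity (since $\bbE_{\pi_{N,\alpha}}[\eta_i^2]=O(1)$ uniformly by Lemma \ref{Lemma:AbsCont}), but the generator identity
$$\cL\Bigl(\sum_i\eta_i^2\Bigr)(x) = \tfrac{1}{2}\sum_{k=1}^{N-1}\Bigl[c_\alpha(\eta_k+\eta_{k+1})^2-(\eta_k-\eta_{k+1})^2\Bigr],\qquad c_\alpha=\int_0^1(2u-1)^2\rho_\alpha(u)\,du,$$
combined with a crude Gronwall argument only produces an exponentially growing bound, which is useless over the relevant time window of order $N^2\log N$. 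My plan to resolve this is by comparison: for $s$ bounded away from $0$, show that the law of $\bX(s)$ has a density with respect to $\pi_{N,\alpha}$ that is bounded, uniformly in $N$, on a set of full $\pi_{N,\alpha}$-measure, which together with the stationary bound gives the estimate; this regularization relies on the fact that rate-one updates in the second---initially frozen---half of the configuration smooth out the singular part of the initial law within time $O(1)$. For small $s$, the bound follows from a local generator computation exploiting the spatial separation between the active (first half) and frozen (second half) regions of the initial condition.
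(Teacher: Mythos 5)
Your overall scheme (eigenfunction identity for the mean, variance via the bracket/carr\'e du champ, reduction to the uniform moment bound $\bbE[\sum_i\eta_i^2(\bX(s))]\le CN$) is the same as the paper's, and the mean and initial-variance computations are fine. The genuine gap is exactly at the step you flag as the main obstacle, and your plan for it does not work. You propose to show that for $s$ bounded away from $0$ the law of $\bX(s)$ has a density with respect to $\pi_{N,\alpha}$ that is bounded uniformly in $N$. This is impossible: a density bounded by $C$ would force $\bbP(\bX(s)\in A)\le C\,\pi_{N,\alpha}(A)$ for every Borel set $A$, whereas at any time $s$ far below the mixing time (in particular for $s=O(1)$, but also for all $s\le (1-\gep)\frac{N^2}{\pi^2}\log N$, which is the regime actually needed since the exponential weight in the variance integral only localizes $s$ to within $O(N^2)$ of $t$) there are events of vanishing --- indeed exponentially small --- equilibrium probability that carry almost all of the mass of $\bX(s)$; for instance $\{x_{\lfloor 3N/4\rfloor}\ge 7N/8\}$ at $s=O(1)$, or the event $B_t$ used in the lower-bound argument, which shows the total variation distance stays close to $1$. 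Updating the frozen half of the configuration for a time $O(1)$ removes atoms but cannot produce a density bounded uniformly in $N$, so the comparison with the stationary second moments collapses, and no alternative argument for the uniform estimate is supplied.

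The paper closes this step by a different mechanism, which you may want to compare with: it works on the unconstrained space $\gO_N^+$ and uses the \emph{gradient order} $\succcurlyeq$ preserved by the grand coupling (Proposition \ref{grandgradient}). The initial increments $\eta_k$ are coupled below (for $\succcurlyeq$) increments $\eta'_k$ that are i.i.d.\ $\gG(\alpha,\alpha/2)$ conditioned on $\sum_{j\le N/2}\eta'_j\ge N$, via the beta--gamma independence property ($\eta_k=N\eta'_k/\sum_{j\le N/2}\eta'_j$). Since the i.i.d.\ gamma increment law is stationary for the dynamics on $\gO_N^+$ and the conditioning event has probability at least $1/3$ by the CLT, one gets $\bbE[\eta_k(s)^2]\le \bbE[\eta'_k(s)^2]\le 3\,\bbE[\eta''_k(0)^2]=O(1)$ uniformly in $s$ and $k$, which is precisely the estimate your proof needs. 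Without this (or some substitute such as it), your variance bounds $\var[f_N^{(j)}(\bX(t))]\le CN^3j^{-2}$ are not established.
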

\noindent With this lemma at hand, the proof of the lower bound is simple.
\begin{proof}[Proof of Proposition \ref{loow}]
Define $$B_t=\left\{x\in\gO_N:\, f_N(x)\geq \tfrac12 \,\bbE[f_N({\bf X}(t))]\right\},$$ 
and note that for all $t\geq 0$:
$$d^N(t)\ge \| \bbP( \bX(t)\in \cdot) -\pi_{N,\alpha} \|_{TV}\ge \bbP( \bX(t)\in B_t) - \pi_{N,\alpha}(B_t).$$
Chebyshev's inequality and Lemma \ref{Lemma:Wilson} imply that 
$$
\bbP( \bX(t)\in B_t) \geq 1 - 4C^3N^{-1} e^{2\gap_N t}.
$$
Similarly, noting that $\var_{\pi_{N,\alpha}}(f_N)=\pi_{N,\alpha}(f_N^2)=\lim_{t\to\infty}\var[f_N(\bX(t))]$, one has
$$
\pi_{N,\alpha}(B_t) \leq  4C^3N^{-1} e^{2\gap_N t}.
$$
Recalling that $\gap_N=1-\cos(\pi/N)$ concludes the proof.
\end{proof}

We close this section with the proof of the lemma.

\begin{proof}[Proof of Lemma \ref{Lemma:Wilson}] 
First of all, since $f_N$ is an eigenfunction of $\cL_{N,\alpha}$ associated with the eigenvalue $\gap_N$, we have
$$ \bbE[f_N(\bX(t))] =  \bbE[f_N(\bX(0))]\,e^{-\gap_N t} \ge \tfrac{N^2}{20} \,e^{-\gap_N t}\;,$$
where the last bound follows from the fact that the initial condition satisfies 
$$\bbE[X_k(0)]\geq \min\{2k,N\}.$$
To compute the variance at a fixed time $t > 0$ (we focus on the case $j=1$ to keep the notation light and explain the general case at the end of the proof), we introduce the process 
$$ M_s^t := e^{\gap_N(s-t)} f_N(\bX(s)) 
\;,\quad s\in[0,t]
\;.$$
Since $\bbE[f_N(\bX(s))|\bX(u)] = e^{-\gap_N (s-u)}f_N(\bX(u))$, $u\in [0,s]$, it follows that $(M_s^t,\,s\in[0,t])$ is a martingale.  The associated increasing predictable process, or angle bracket, is denoted $\langle M^t_\cdot \rangle_s$, $s\in[0,t]$. In particular, we look for an upper bound on $\var[f_N(\bX(t))]=\E[\langle M^t_\cdot \rangle_t]$. 

Note that independently of the value of $\alpha$, when an update is performed at coordinate $k$, the value of   
$f_N({\bX}(t))$ varies at most by $\eta_k(s) + \eta_{k+1}(s)$, where $\eta_k(s)=(X_k-X_{k-1})(s)$.
Thus, 
\begin{align*}
\partial_s \langle M^t_\cdot \rangle_s
&\le e^{2\gap_N(s-t)} \sum_{k=1}^{N-1} (\eta_k(s) + \eta_{k+1}(s))^2\\&
\le 4 e^{2\gap_N(s-t)} \sum_{k=1}^{N} (\eta_k(s))^2.
\end{align*}
Therefore,
\begin{equation}\label{Eq:vafxt}
 \var[f_N(\bX(t))]=\E[\langle M^t_\cdot \rangle_t]
 \le  4\int_0^t e^{2\gap_N(s-t)} \sum_{k=1}^{N}\E\left[\eta_k(s)^2\right] \dd s
\end{equation}

%

Now using monotonicity of the dynamics for the gradients (Proposition \ref{grandgradient}), we can replace $\eta_k(s)$ by the gradients corresponding to a dynamics on $\gO^+_N$ starting from a larger initial condition for the order $\succcurlyeq$ defined in \eqref{curlyorder}.
A natural choice is to pick an initial condition which is stationary for the dynamics so that the dependence in $s$ vanishes in the integral.

\medskip

Let $(\eta'_k)_{k=1}^N$ be i.i.d.\ variables with distribution $\gG(\alpha, \alpha/2)$, 
conditioned to  $$\sum_{j=1}^{N/2} \eta'_j\ge N,$$ and consider $\bX'(t)$ the dynamics 
on $\gO^+_N$ with initial condition $X'_k(0):=\sum_{i=1}^k \eta'_k$.
Note that one can construct $\eta$ and $\eta'$ on the same probability space in such a way that $\eta_k\le \eta'_k$ for all $k$ by setting 
\begin{equation}\label{Eq:eta_coupling}
\eta_k:=\frac{N\,\eta'_k}{\sum_{j\le N/2} \eta_j'}\,, \;\text{ for } k\le N/2,
\end{equation}
and $\eta_k=0$, for $k\in\lint N/2+1,N\rint$. Indeed, by a standard property of the gamma distribution,
the variables $\eta'_k/\sum_{j\le N/2} \eta_j'$ and $\sum_{j\le N/2} \eta_j'$ are independent and therefore the $\eta_k$ defined in \eqref{Eq:eta_coupling} has the correct distribution. 
Hence using Proposition \ref{grandgradient} we can assume $\bX(t)\preccurlyeq \bX'(t)$ for all $t\ge 0$ and thus  in particular 
$\bbE\left[\eta_k(s)^2\right] \le 
\bbE\left[\eta'_k(s)^2\right]$.

\medskip

Finally let $(\eta''_k)_{k=1}^N$ be (unconditioned) i.i.d.\  with distribution $\gG(\ga,\ga/2)$. Defining the dynamics $\eta''_k(s)$ with this initial condition one has
\begin{align*}
 \bbE\left[\eta'_k(s)^2\right]&= \bbE\left[\eta''_k(s)^2 \ | \ \textstyle{\sum_{j\le N/2}} \,\eta_j''(0) \ge N \right]
 \le \frac{\bbE\left[\eta''_k(s)^2\right]}{\bbP\left[ \sum_{j\le N/2} \eta_j''(0)\ge N\right]}.
\end{align*}
By stationarity,
$$
\bbE\left[\eta''_k(s)^2\right]=\bbE\left[\eta''_k(0)^2\right]=4+\frac{4}{\ga}.
$$
Since the expected value of each $\eta''_k(0)$ is $2$, the central limit theorem shows that
the event $\sum_{j\le N/2} \eta_j''(0)\ge N$ has probability at least $1/3$ if $N$ is sufficiently large.
In conclusion,
$$
\bbE\left[\eta_k(s)^2\right] \le \bbE\left[\eta'_k(s)^2\right]\leq 12(1+\ga^{-1})\,.
$$
Using this bound in \eqref{Eq:vafxt} shows that $\var[f_N(\bX(t))]\leq CN/\gap_N\le C' N^3$ uniformly in $t\geq 0$. 

\medskip

\noindent
For $j\in \lint 2,N-1\rint$, we repeat the above procedure for the martingale 

$$M^{t,j}_s:= e^{-\gl^{(j)}_{N}(s-t)} f_N^{(j)}(\bX(t))$$
and obtain  $\var[f^{(j)}_N(\bX(t))]\leq CN/\gl^{(j)}_N \leq C N^3 j^{-2} $.
This concludes the proof of Lemma \ref{Lemma:Wilson}.

\end{proof}
\section{Mixing time for the separation distance}\label{sec:sepa}
Here we prove Theorem \ref{Th:Separation}. The main result of this section is the following lower bound.

\begin{proposition}\label{Prop:lowbosep}
  For any $\alpha>0$, there exists $C_{\alpha,\gep}>0$ such that for any $\gep\in(0,1)$ and for all $N\ge N_{\gep}$ sufficiently large we have
 \begin{equation}\label{Eq:lowbosep}
  T^{\,\mathrm{sep}}_{N,\alpha}(\gep)\ge N^2\left(\frac{2}{\pi^2}\log N-C_{\alpha,\gep}\log \log N \right).
 \end{equation}
\end{proposition}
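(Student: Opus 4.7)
For the lower bound on the separation mixing time I would construct two sets $A$ and $B$ localized respectively around the extremal configurations $\wedge$ and $\vee$ from \eqref{Eq:WedgeVee}, and show that the ratio $R_t(A,B):=P_t(A,B)/\bigl(\pi_{N,\alpha}(A)\pi_{N,\alpha}(B)\bigr)$ is at most $1-\gep$ at the target time $t_\star:=N^2\bigl(\tfrac{2}{\pi^2}\log N-C_{\alpha,\gep}\log\log N\bigr)$. A natural choice is $A=\{x\in\gO_N:\ N-x_k\leq \eta_N,\ \forall k\}$ and symmetrically $B=\{x\in\gO_N:\ x_k\leq \eta_N,\ \forall k\}$, for an appropriate sequence $\eta_N$. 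The mechanism at work is that the extremal configurations saturate the linear eigenfunctions with alternating signs: Riemann sum computations yield $f_N^{(j)}(\wedge)\sim N^2/(j\pi)$ and $f_N^{(j)}(\vee)\sim (-1)^j N^2/(j\pi)$, so $f_N^{(j)}$ takes opposite signs at $\wedge$ and $\vee$ for odd $j$. Combined with the sharp variance identity $\|f_N^{(j)}\|_2^2=\operatorname{var}_{\pi_{N,\alpha}}(f_N^{(j)})\asymp N^3/j^2$ (obtained by refining Lemma~\ref{Lemma:Wilson}: Lemma~\ref{Lemma:AbsCont} reduces the constrained measure $\pi_{N,\alpha}$ to independent $\Gamma(\alpha,\alpha)$ increments, on which the variance is explicit), the $L^2$-normalised eigenfunctions $\phi_N^{(j)}:=f_N^{(j)}/\|f_N^{(j)}\|_2$ satisfy $\phi_N^{(j)}(\wedge)\sim N^{1/2}/\pi$ and $\phi_N^{(j)}(\vee)=(-1)^j\phi_N^{(j)}(\wedge)$.

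I would then exploit the pure-point spectral decomposition of $\cL_{N,\alpha}$ in $L^2(\pi_{N,\alpha})$ established in Section~\ref{Sec:Eigen} to write
$$
R_t(A,B)=1+\sum_{k\geq 1}e^{-\lambda_k t}\,\frac{\langle\ind_A,\phi_k\rangle\,\langle\ind_B,\phi_k\rangle}{\pi_{N,\alpha}(A)\pi_{N,\alpha}(B)}\,,
$$
where $\{\phi_k\}$ is an orthonormal basis of eigenfunctions of $-\cL_{N,\alpha}$ with eigenvalues $\{\lambda_k\}$. For $A$ and $B$ concentrated enough, each quotient $\langle\ind_A,\phi_k\rangle/\pi_{N,\alpha}(A)$ is well approximated by $\phi_k(\wedge)$ (and similarly $\phi_k(\vee)$ for $B$); restricted to the linear eigenbasis, the sum then becomes $\tfrac{N}{\pi^2}\sum_{j=1}^{N-1}(-1)^j e^{-\lambda_N^{(j)}t}$, an alternating series dominated in absolute value by its first term $-\tfrac{N}{\pi^2}e^{-\gap_N t}$ since $\lambda_N^{(j)}\geq 4\gap_N$ for $j\geq 2$. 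Imposing $\tfrac{N}{\pi^2}e^{-\gap_N t}\geq c_\gep$ (with $c_\gep\to 0$ as $\gep\to 0$) yields $t\leq \log N/\gap_N-C_\gep$, which via $\gap_N\sim \pi^2/(2N^2)$ matches the announced bound up to the $\log\log N$ correction.

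Two technical obstacles must be overcome. The first, minor one, is that the approximation $\langle\ind_A,\phi_k\rangle/\pi_{N,\alpha}(A)\approx\phi_k(\wedge)$ is only valid up to the oscillation of $\phi_k$ over $A$; the parameter $\eta_N$ must be tuned so that this oscillation is negligible for the few eigenfunctions that contribute, while $\pi_{N,\alpha}(A)$ remains non-vanishing. The trade-off between these requirements accounts for the $\log\log N$ correction in the statement, and both conditions turn out to be compatible by the $\Gamma$-product absolute continuity of Lemma~\ref{Lemma:AbsCont}. The second and more serious obstacle is the control of the contributions of the nonlinear polynomial eigenfunctions (orthogonal projections of products $f_N^{(j_1)}\cdots f_N^{(j_r)}$, $r\geq 2$): their eigenvalues are sums $\lambda_N^{(j_1)}+\cdots+\lambda_N^{(j_r)}\geq 2\gap_N$, but their values at the extremal points can reach order $N^{r/2}$ (as predicted by the Hermite-polynomial analogy with a near-Gaussian $\pi_{N,\alpha}$), so they are of the same order as the linear leading term at $t_\star$. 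This is the main hurdle: handling it rigorously likely requires a Mehler-type factorisation of the spectral kernel $p_t(\wedge,\vee)$, exploiting the FKG/log-concavity structure of $\pi_{N,\alpha}$ available for $\alpha\geq 1$ (Proposition~\ref{prop:fkg}) to assemble the higher-order contributions into a product $\prod_j\bigl(1+(-1)^{j}e^{-\lambda_N^{(j)}t}\phi_N^{(j)}(\wedge)^2+\cdots\bigr)$ whose dominant behaviour is $\exp\bigl(-c\,Ne^{-\gap_N t}\bigr)$.
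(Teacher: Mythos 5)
Your proposal is not a proof: the step you yourself flag as the ``main hurdle'' is exactly where the argument breaks down, and nothing in the proposal resolves it. In the spectral expansion of $R_{t}(A,B)$ the linear modes alone contribute, at your target time $t_\star$, a term of order $-\tfrac{N}{\pi^2}e^{-\gap_N t_\star}\asymp -(\log N)^{c}$, which diverges to $-\infty$; since $R_t(A,B)\ge 0$, the nonlinear polynomial eigenfunctions cannot be treated as a perturbation --- they must \emph{dominate} and compensate the linear term, and the true behaviour of the kernel near the extremal corners is of the multiplicative type $\exp\bigl(-cNe^{-\gap_N t}\bigr)$ that you only conjecture. There is no exact Mehler-type factorisation for $\pi_{N,\alpha}$ (the eigenfunctions beyond the linear sector are not explicit, the products $f_N^{(j_1)}\cdots f_N^{(j_r)}$ are not eigenfunctions, and the measure is neither Gaussian nor product), so assembling the higher-order contributions into the claimed product form is an unsubstantiated essential step, not a technicality. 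A secondary issue is that identifying $\langle \ind_A,\phi_k\rangle/\pi_{N,\alpha}(A)$ with $\phi_k(\wedge)$ uniformly over the infinitely many eigenfunctions that matter is also uncontrolled; the oscillation bound you invoke is only plausible for the low-lying linear modes.

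The paper takes a completely different route that avoids any spectral expansion of the kernel. With $A=\bar\wedge$, $B=\underline\vee$ as in your proposal, one uses reversibility, the top--bottom symmetry, and the fact (Lemma \ref{lem:muqi} / Lemma \ref{cor:censor}) that the law at time $t_0/2$ started from $\bar\wedge$ has an \emph{increasing} density with respect to $\pi_{N,\alpha}$, together with the FKG inequality restricted to the set $U_-=\{x_{N/2}\le N/2\}$, to reduce the two-sided bound $P_{t_0}(A,B)\le\gep\,\pi_{N,\alpha}(A)\pi_{N,\alpha}(B)$ to a one-sided statement at the \emph{half} time: starting from $\wedge$, the middle particle satisfies $X_{N/2}(t_0/2)\ge N/2+1$ with probability at least $1-\gep/4$. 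This is then proved by a first/second moment (Chebyshev) argument using the Fourier decomposition \eqref{Eq:XiFourier} of $X_{N/2}(t)-N/2$, the exact decay $\bbE[f_N^{(j)}(\bX(t))]=e^{-\lambda_N^{(j)}t}\bbE[f_N^{(j)}(\bX(0))]$, and the uniform-in-time variance bounds of Lemma \ref{Lemma:Wilson}; the choice of the $\log\log N$ correction makes the mean at time $t_0/2$ of order $\sqrt N(\log N)^2$, beating the $O(\sqrt N\log N)$ fluctuations. The factor $2$ in the separation mixing time thus comes from halving the time in the reversibility argument, not from squaring eigenfunction contributions as in your expansion. If you want to salvage your idea, you would essentially have to prove sharp two-sided heat-kernel asymptotics at the corners of the simplex, which is a much harder problem than the statement itself.
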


With this result at hand, and assuming the validity of Theorem \ref{Th:Maingen}, the derivation of the asymptotic of the separation mixing times is somewhat standard.
\begin{proof}[Proof of Theorem \ref{Th:Separation}]
Proposition \ref{Prop:lowbosep} gives the desired lower bound on the mixing times. Regarding the upper bound, we adapt the argument used in the discrete setup (see e.g \cite[Lemma 19.3]{LevPerWil}) to show that 
\begin{equation}\label{lapreuve}
d^{\,\mathrm{sep}}_{N,\alpha}(2t)\le 1-(1-2d_{N,\alpha}(t))_+^2,
\end{equation}
where $(\cdot)_+$ denotes the positive part. Theorem \ref{Th:Maingen} and \eqref{lapreuve} clearly imply the desired upper bound in Theorem \ref{Th:Separation}. 

Recalling the notation $P_t(A,B)$ defined below \eqref{defseparation} we notice that reversibility implies that $P_t(A,B)=P_t(B,A)$ and as a consequence 
\begin{equation}\label{reversibb}
P_t(B,\dd z)=P^z_t(B)\pi_{N,\alpha}(\dd z)
 \end{equation}
Thus for any $A,B$ with positive Lebesgue measure, using the semigroup property at the first line and reversibility at the second line
\begin{align*}
P_{2t}(A,B) &= \int_{\gO_N}  P_{t}(A,\dd z) P^z_t(B)\\
&= \int_{\gO_N}  P^z_{t}(A) P^z_t(B) \pi_{N,\alpha}(\dd z) 
\end{align*}
By Schwarz' inequality, we have
\begin{align*}
\sqrt{\frac{P_{2t}(A,B)}{\pi_{N,\alpha}(A)\pi_{N,\alpha}(B)}} &\ge  \int_{\gO_N}  \sqrt{\frac{P^z_{t}(A) P^z_t(B)}{\pi_{N,\alpha}(A)\pi_{N,\alpha}(B)}} \pi_{N,\alpha}(\dd z) \\
&\ge \int_{\gO_N}  \frac{P^z_{t}(A)}{\pi_{N,\alpha}(A)} \wedge \frac{P^z_{t}(B)}{\pi_{N,\alpha}(B)}\pi_{N,\alpha}(\dd z).
\end{align*}
Let $\bar P_t(A,\dd z)= (\pi_{N,\alpha}(A))^{-1}P_t(A,\dd z)$ denote the normalized version of $P_t(A,\dd z)$. 
From \eqref{reversibb} it follows that $\frac{P^z_{t}(A)}{\pi_{N,\alpha}(A)}$ and $\frac{P^z_{t}(B)}{\pi_{N,\alpha}(B)}$ are the respective densities of $\bar P_t(A,\cdot)$ and $\bar P_t(B,\cdot)$ w.r.t. $\pi_{N,\alpha}$. Therefore, using the triangular inequality one has
\begin{align*}
\sqrt{\frac{P_{2t}(A,B)}{\pi_{N,\alpha}(A)\pi_{N,\alpha}(B)}} &\ge 
1- \| \bar P_t(A,\cdot)- \bar P_t(B,\cdot)\|_{TV} 
\ge (1-2d_{N,\alpha}(t))_+.
\end{align*} 
Taking the infimum over $A$ and $B$ yields \eqref{lapreuve}.
%
\end{proof}

\begin{proof}[Proof of Proposition \ref{Prop:lowbosep}] 
We are going  to show that for all $\alpha,\gep>0$, there exists $C>0$ such that if  
$$t_{0}:= N^2\left(\frac{2}{\pi^2}\log N-C\log \log N \right), $$
then one can find measurable sets $A,B\subset \gO_N$ such that  
$$ P_{t_{0}}(A,B)\le \gep \,\pi_{N,\alpha}(A)\pi_{N,\alpha}(B).$$ 
A natural choice to minimize $P_{t_{0}}(A,B)$ is to choose $A,B$ as tiny neighbourhoods of the opposite extremal configurations. 
%
We define the neighbourhoods of our extremal configurations $\vee$ and $\wedge$ as follows
\begin{equation}
\begin{split}
 \bar\wedge&:= \{ x\in \gO_N \ : \ \forall i \in \lint 1,N-1 \rint, x_i\ge N-1/N\},\\
 \underline\vee&:= \{ x\in \gO_N \ : \ \forall i \in \lint 1,N-1 \rint, x_i\le 1/N\}.
\end{split}
\end{equation}
and we are going to prove 
\begin{equation}\label{toprove}
\int_{\bar\wedge}P_{t_0}(x,\underline\vee)\pi_{N,\alpha}(\dd x)\le \gep \pi_{N,\alpha}(\bar\wedge)\pi_{N,\alpha}(\underline\vee).
\end{equation}
We assume for simplicity that $N$ is even just for the sake of notation. The first step of the proof is to reduce \eqref{toprove} to a simple statement about the process at time $t_0/2$

\begin{equation}\label{simplest}
 \frac{1}{\pi_{N,\alpha}(\bar\wedge)}\int_{\bar\wedge} P^x_{t_0/2}[ x_{N/2} < N/2] \pi_{N,\alpha}(\dd x) \le \gep/4,
\end{equation}
and the second step is to prove \eqref{simplest}.

\medskip

Let us now show that 
\eqref{toprove} follows from \eqref{simplest}
as a  consequence of reversibility and the FKG inequality.
The proof is in fact very similar to the one developped in \cite[Section 7.1]{Lac16} in a discrete setup.
We have 
\begin{multline}\label{kernex}
 \int_{\bar\wedge}P_{t_0}(x,\underline\vee)\pi_{N,\alpha}(\dd x) \\
 =\int_{x\in \bar\wedge} \int_{y\in \gO_N}  \int_{z\in \underline\vee}  \pi_{N,\alpha}(\dd x) P_{t_0/2}(x,\dd y)P_{t_0/2}(y,\dd z)   
\end{multline}
Now we can split the integral on $y$ into two contributions 
$$ U_+=\{ x\in \gO_N \ : x_{N/2}\ge N/2\} \quad \text { and } \quad   U_-=\{ x\in \gO_N \ : x_{N/2}\le N/2\},$$
with $U_+\cup U_-=\gO_N$ and $\pi_{N,\alpha}(U_+\cap U_-)=0$.
Using reversibility and symmetry we have 
\begin{multline}
 \int_{x\in \bar\wedge} \int_{y\in U_+}  \int_{z\in \underline\vee}  \pi_{N,\alpha}(\dd x) P_{t_0/2}(x,\dd y)P_{t_0/2}(y,\dd z) \\
 =  \int_{x\in \bar\wedge} \int_{y\in U_+}  \int_{z\in \underline\vee}  \pi_{N,\alpha}(\dd z) P_{t_0/2}(y,\dd x)P_{t_0/2}(z,\dd y) \\
  =  \int_{x\in \underline\vee} \int_{y\in U_-}  \int_{z\in \bar\wedge}  \pi_{N,\alpha}(\dd z) P_{t_0/2}(y,\dd x)P_{t_0/2}(z,\dd y), 
\end{multline}
so that the right hand side of \eqref{kernex} is equal to 
\begin{equation}\label{zoop}
 2\int_{x\in \bar\wedge} \int_{y\in U_-}  \pi_{N,\alpha}(\dd x) P_{t_0/2}(x,\dd y)P_{t_0/2}(y,\underline\vee).
\end{equation}
Now according to the observations made in Section \ref{Sec:AbsCont} (see Lemma \ref{lem:muqi}), the measure
$$\int_{x\in \bar\wedge} \pi_{N,\alpha}(\dd x) P_{t_0/2}(x,\dd y)$$ has an increasing density with respect to $\pi_{N,\alpha}$ (call it $\rho_+$).
Also we observe that by monotone coupling $P_{t_0/2}(y,\underline\vee)$ is decreasing in $y$.
Hence using the FKG inequality (Proposition \ref{prop:fkg} for the restriction to the stable set $U_-$) we obtain that the 
quantity \eqref{zoop} satisfies 
\begin{multline}
 2 \int_{U_-} \rho_+(y)P_{t_0/2}(y,\underline\vee)\pi_{N,\alpha}(\dd y)
 \\
 \le \frac{2}{\pi_{N,\alpha}(U_-)} \int_{U_-} \rho_+(y) \pi_{N,\alpha} (\dd y)
  \int_{U_-} P_{t_0/2}(y,\underline\vee) \pi_{N,\alpha} (\dd y).
\end{multline}
Using stationarity we see that the second integral is smaller than
$$ \int_{\gO_N} P_{t_0/2}(y,\underline\vee) \pi_{N,\alpha} (\dd y)=\pi_{N,\alpha}(\underline\vee).$$
Hence using the fact that $\pi_{N,\alpha}(U_-)=1/2$ and replacing $\rho_+(x)\pi_{N,\alpha}(\dd x)$ by its definition we can conclude that 
\begin{equation}
 \int_{\bar\wedge}P_{t_0}(x,\underline\vee)\pi_{N,\alpha}(\dd x)\le 4  \pi_{N,\alpha}(\underline\vee) \int_{\bar\wedge}P_{t_0/2}(x,U_-)\pi_{N,\alpha} (\dd x). 
\end{equation}
This proves that \eqref{toprove} follows from \eqref{simplest}. 
It remains to prove \eqref{simplest},
or in other words that starting from a random initial condition in $\bar\wedge$
the probability that $\bX_{t_0/2}\in U_-$ is small.


\medskip

\noindent Note that \eqref{simplest} follows from a slightly stronger result for the maximal initial condition
\begin{equation}\label{trook}
\bbP[X^{\wedge}_{N/2}(t_0/2)\ge N/2+1] \ge 1- \gep/4.
\end{equation}
Indeed, with the grand coupling described in the proof of Proposition 
\ref{grandgradient}, for any $x\in \bar\wedge$, $i\in \lint 1, N-1\rint$ and any $t>0$
we have $X^x_i(t)\ge X^{\wedge}_{i}-1.$

\medskip

Now by monotonicity we can replace $\bX^{\wedge}(t)$ by the dynamics with the random initial condition considered in Lemma \ref{Lemma:Wilson}.

\medskip

\noindent The function $i \mapsto X_{i}(t)-i$ can be decomposed on the orthonormal basis of the discrete Laplacian (endowed with Dirichlet b.c.). Using \eqref{defj}, we thus obtain
\begin{equation}\label{Eq:XiFourier}
 X_{i}(t)-i=\frac{2}{N}\sum_{j=1}^{N-1} \sin\left( \frac{ij\pi}{N}\right)
f^{(j)}_N(\bX(t)).
\end{equation}
Using the fact that $\bbE[f^{(j)}_N(\bX(t))]=e^{-\gl^{(j)}_{N} t}\bbE[f^{(j)}_N(\bX(0))]$,  and observing that all the terms with $j\ge 2$ become negligible, it follows that 
if  the constant $C$ is large enough, then
\begin{equation}\label{fm}
 \bbE[X_{N/2}(t_0/2)-N/2]\ge N^{1/2} (\log N)^2
\end{equation}
Now the important part is to control the variance of $X_{N/2}(t)$. Our control is uniform in time.

From the very rough bound $\var (\sum_{i\in I} Z_i)\le\left( \sum_{i\in I}\sqrt{\var Z_i}\right)^2$ valid for any sequence of random variables $(Z_i)_{i\in I}$, and  using Lemma \ref{Lemma:Wilson} one obtains
\begin{equation}\label{sm}
\var( X_{N/2}(t))\le 4 N^{-2} \left( \sum_{j=1}^{N-1}\sqrt{\var f^{(j)}_N(\bX(t))}\right)^2 \le C N (\log N)^2.
\end{equation}
Then \eqref{trook} can easily be deduced by combining the inequalities for  the two first moments \eqref{fm} and \eqref{sm}.
%
\end{proof}
\section{Upper bound on total variation distance}\label{sec:ub}

\subsection{Decomposition of the proof}\label{sec:upperbound}
In this section we prove the upper bound on the total variation mixing time 
displayed in Theorems \ref{Th:Main} and \ref{Th:Maingen}.
Given $\delta>0$ we set
$$ t_{N,\delta}=t_{\delta}:= (1+\delta) \frac{\log N}{2 \gap_N}\;,$$
and we are going to prove that
\begin{equation}\label{eq:thth}
 \lim_{N\to \infty} \sup_{x\in \gO_N} \| P_{t_{N,\delta}}^x - \pi_{N,\alpha} \|_{TV}=0\;.
\end{equation}
Since $\delta > 0$ is arbitrary and $\gap_N
{\sim} \frac{1}{2}\pi^2 N^{-2}$ this yields the desired upper bound.
We establish this statement via two intermediate propositions. Firstly, we show that by time $t_{\delta}$ we can bring to equilibrium the extremal process started in the maximal configuration $\wedge$.
\begin{proposition}\label{Prop:AbsCont}
For any $\delta>0$ we have
$$\lim_{N\to \infty} 
\| P_{t_{N,\delta}}^\wedge - \pi_{N,\alpha} \|_{TV} = 0\;.$$
\end{proposition}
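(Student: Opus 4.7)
The plan is to adapt the FKG--censoring strategy of~\cite{Lac16} for the adjacent transposition chain to the present continuous setting, the main twist being that one has to handle the singular initial condition $\delta_{\wedge}$. Writing $\rho_t^{\wedge}$ for the density of $P_t^{\wedge}$ with respect to $\pi_{N,\alpha}$ once absolute continuity has been established, the reversibility of $\cL_{N,\alpha}$ and the spectral gap of Proposition~\ref{gapz} give $\|\rho_{s+t}^{\wedge}-1\|_{L^{2}(\pi_{N,\alpha})}\le e^{-\gap_N t}\|\rho_s^{\wedge}-1\|_{L^{2}(\pi_{N,\alpha})}$ together with $\|P_{s+t}^{\wedge}-\pi_{N,\alpha}\|_{TV}\le \tfrac12\|\rho_{s+t}^{\wedge}-1\|_{L^{2}(\pi_{N,\alpha})}$. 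Since $e^{-\gap_N t_{N,\delta}}=N^{-(1+\delta)/2}$, a direct $L^{2}$ bound would require $\|\rho_{s_N}^{\wedge}\|_{L^{2}(\pi_{N,\alpha})}=o(N^{(1+\delta)/2})$ after some warm-up $s_N=o(t_{N,\delta})$; this cannot hold in general, and censoring is precisely what will compensate for the gap.

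First I would regularize $\delta_{\wedge}$: after a polylogarithmic time $s_N$, every coordinate has been updated many times with high probability, and one identifies $P_{s_N}^{\wedge}$ as an explicit convolution of Beta kernels with respect to Lebesgue measure. A crude comparison---using Lemma~\ref{Lemma:AbsCont} to replace $\pi_{N,\alpha}$ by a tensor product of $\gG(\alpha,\alpha)$ increments on a proper subsegment---gives absolute continuity and a controllable, though possibly exponential in $N$, bound on $\rho_{s_N}^{\wedge}$. In parallel, the monotone grand coupling of Proposition~\ref{grandgradient} ensures that $P_t^{\wedge}$ stochastically dominates every $P_t^{x}$, and a Holley-type argument analogous to Lemma~\ref{lem:piapib} should upgrade this into the pointwise property that $\rho_t^{\wedge}$ is non-decreasing on $\gO_N$ with respect to the coordinate order~\eqref{coord} at every positive time.

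With monotonicity of the density and with FKG via Proposition~\ref{prop:fkg} at hand, the decisive tool will be a continuous version of the Peres--Winkler censoring inequality~\cite{PWcensoring}: omitting any prescribed family of updates can only increase the $L^{2}$ distance to $\pi_{N,\alpha}$. I would then design a censoring schedule that reduces the dynamics to blocks on shorter windows of coordinates---such blocks inheriting the sharp spectral estimate of Section~\ref{Sec:Eigen} applied to a shorter segment---and iterate this reduction so that, combined with the $L^{2}$ contraction over the remaining time, one obtains $\|\rho_{t_{N,\delta}}^{\wedge}-1\|_{L^{2}(\pi_{N,\alpha})}\to 0$. The main obstacle I anticipate is the rigorous continuous formulation of the censoring inequality together with the singular initial layer: extending Peres--Winkler beyond finite state spaces, and checking that monotonicity of $\rho_t^{\wedge}$ survives each censored update despite the atomic initial condition, is exactly the delicate measure-theoretic point flagged in the introduction, and most of the technical work of Section~\ref{Sec:AbsCont} will presumably be devoted to it.
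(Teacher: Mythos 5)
Your proposal has two genuine gaps. First, the regularization step fails: after a polylogarithmic warm-up the law $P^{\wedge}_{s_N}$ is still essentially singular with respect to $\pi_{N,\alpha}$. Updating a coordinate $k$ while both of its neighbours are still at $N$ leaves it exactly at $N$, so "every coordinate has been updated many times" does not give a density: absolute continuity requires the update sequence to contain the increasing sweep $1,2,\dots,N-1$ as a subsequence, and this front needs a time of order at least $N$; at polylogarithmic times all but the first few coordinates are still exactly equal to $N$ with probability $1-o(1)$. The paper circumvents densities and warm-ups altogether by introducing the class $\cS$ of mixtures of measures supported on $\{x_k=N\}$ with increasing density with respect to $\pi_{k,\alpha}$, showing that this class is preserved by single updates and by censoring, and comparing in total variation through stochastic domination (Lemmas \ref{lem:kfs}, \ref{lem:muqi}, \ref{cor:censor}, \ref{lem:mutn}); no quantitative bound on any density is ever needed, and no $L^2$ norm appears.

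Second, and more importantly, the censoring-into-blocks scheme cannot close the argument as you describe it. Once the block boundaries are censored, the censored dynamics relaxes to the conditional equilibrium $\pi_{N,\alpha}(\cdot\,|\,y)$ given the frozen boundary positions, not to $\pi_{N,\alpha}$; faster spectral gaps on shorter segments only accelerate convergence towards this wrong target. The entire burden is therefore to prove that the boundary (special) coordinates have already reached their equilibrium marginal by time $(1+\delta/2)\frac{\log N}{2\gap_N}$ under the uncensored dynamics, and your sketch contains no mechanism for this. It is the heart of the paper's proof: the heat-equation computation only yields a first-moment bound $\E[W]=o(\sqrt N)$ on the centered heights of the $K-1$ special particles (Proposition \ref{prop:Wt}), and a dedicated FKG argument (Proposition \ref{prop:muW}, then Proposition \ref{prop:special}) is needed to upgrade this single expectation estimate into total variation convergence of that marginal. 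Relatedly, your acknowledgement that a direct $L^2$ bound fails is not repaired by the iteration you propose: the available contraction $e^{-\gap_N t_{N,\delta}}=N^{-(1+\delta)/2}$ cannot beat an initial $L^2$ norm that may be exponentially large in $N$, and nothing in the block reduction produces the equilibration of the block boundaries that would let the censored comparison conclude.
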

Secondly, we show that with large probability and for any given initial condition $x$, we can couple the process starting form $x$ with the maximal process.
\begin{proposition}\label{Prop:Coalesce}
For any $\delta>0$ we have
$$\lim_{N\to \infty} \sup_{x\in \Omega_N} \|P^x_{t_{N,\delta}}-
P^{\wedge}_{t_{N,\delta}}\|_{TV}=0.$$
\end{proposition}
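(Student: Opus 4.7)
The plan is to reduce the statement to a coalescence problem for the two extremal processes. Under the monotone grand coupling of Proposition \ref{grandgradient}, one has $\bX^\vee(t)\le \bX^x(t)\le \bX^\wedge(t)$ coordinatewise for every $x\in\gO_N$ and every $t\ge 0$, so that on the event $\{\bX^\vee(t_{N,\delta}) = \bX^\wedge(t_{N,\delta})\}$ all processes coincide at time $t_{N,\delta}$. The coupling inequality then reduces the proposition to showing that this event has probability tending to one. The natural quantity to follow is the area $A(t):=\sum_{k=1}^{N-1}(X^\wedge_k(t) - X^\vee_k(t))$, which is zero precisely when the extremal processes have merged.

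Since the generator acts on the linear coordinates as $\tfrac{1}{2}\Delta$ by \eqref{genlinear}, the mean $\bbE[A(t)]$ satisfies a discrete heat equation with Dirichlet data, and its decomposition on the Fourier basis $\{f^{(j)}_N\}_{j=1}^{N-1}$ of eigenfunctions of $\cL_{N,\alpha}$ yields $\bbE[A(t)]\le CN^2 e^{-\gap_N t}$. Fixing a large constant $K$ and setting $t_1:=t_{N,\delta}-KN^2$, the fact that $\gap_N\sim\tfrac{\pi^2}{2}N^{-2}$ implies $t_{N,\delta}-t_1=\Theta(N^2)=o(t_{N,\delta})$ and $\bbE[A(t_1)]\le C'N^{(3-\delta)/2}$. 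Markov's inequality then gives $A(t_1)\le N^{3/2}$ with probability tending to one. It remains to show that, with high probability, the remaining budget of $KN^2$ units of time suffices to drive $A$ all the way to zero.

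For this, the strategy is a multi-scale descent: through a finite sequence of intermediate scales $N^{3/2}\to N\to N^{1/2}\to\cdots\to N^{-1}$, we control the hitting time of each scale starting from the previous one. The main tool is a supermartingale argument applied to $A$: after subtracting its deterministic, non-increasing mean, the martingale part has predictable bracket whose derivative is controlled by the sum of squared local increments $\eta^\wedge_k(t)=X^\wedge_k(t)-X^\wedge_{k-1}(t)$ and $\eta^\vee_k(t)$, in the spirit of the computation carried out in Section \ref{sec:lower}. Standard diffusive estimates on nonnegative supermartingales then give hitting-time upper bounds of $o(N^2)$ at each scale, so that $A$ reaches the threshold $N^{-1}$ within a time of order $N^2$ with high probability. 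The final step is a brute-force coupling argument of duration $O(\log N)$, which exploits the smallness of $A$ to exhibit an event of probability bounded below on the update sequence that forces exact coalescence.

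The main obstacle will be the uniform-in-time control of the local gradients $\eta^\wedge_k(t),\eta^\vee_k(t)$ needed to make the bracket estimates effective at every stage of the descent. This is where Proposition \ref{Prop:AbsCont} enters in an essential way: once $\bX^\wedge(t)$ is close in total variation to $\pi_{N,\alpha}$, its increments can be compared, via the absolute-continuity Lemma \ref{Lemma:AbsCont}, to independent $\gG(\alpha,\alpha)$ variables, yielding the required moment bounds. The FKG inequality of Section \ref{sec:fkg} is then used to preserve stochastic monotonicity throughout the cascade and to handle the effects of conditioning on the sum. Implementing this simultaneously at every scale—so that the aggregated descent time stays $o(t_{N,\delta})$ and that the fluctuations of $A$ never throw it out of the descent corridor—is the technical heart of the argument, and is precisely what makes the multi-scale approach necessary: a single brute-force argument from $N^{3/2}$ would give a sub-optimal constant in the mixing time, whereas the cascade brings the cost down to $o(t_{N,\delta})$.
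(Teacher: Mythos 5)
Your reduction contains a fatal step: you sandwich $\bX^\vee\le\bX^x\le\bX^\wedge$ using the monotone grand coupling of Proposition \ref{grandgradient} and then ask for the event $\{\bX^\vee(t_{N,\delta})=\bX^\wedge(t_{N,\delta})\}$ to have high probability. Under that coupling this event has probability \emph{zero}: when a site $k$ is updated with the same variable $u$, the difference evolves as $\delta X_k = u\,\delta X_{k-1}+(1-u)\,\delta X_{k+1}$, so it stays strictly positive as long as a neighbouring difference is positive, and the merging time is $\tau=\infty$ almost surely (this is pointed out explicitly in Section \ref{sec:ub}). Exact coalescence requires a ``sticky'' coupling that at each update maximizes the probability that the two resampled coordinates coincide, as in Section \ref{thecoupling}. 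But once you switch to such a coupling you lose the three-process sandwich: for general $\alpha\ge 1$ there is no simple grand coupling that is simultaneously monotone and makes the two extremal processes coalesce efficiently (this is exactly the obstruction the paper records when explaining why the Randall--Winkler argument, which works for $\alpha=1$, does not extend directly). The statement to prove is about $\|P^x_t-P^\wedge_t\|_{TV}$, so no sandwich is needed at all: the paper couples $\bX^x$ directly with $\bX^\wedge$ via the pairwise sticky coupling and bounds $\P[\tau>t]$ for the area $A_t=\sum_k(X^\wedge_k(t)-X^x_k(t))$.

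A second, related gap is in your bracket estimate. To make the supermartingale hitting-time argument (Proposition \ref{prop:solskjaer}) push $A$ down through the scales, one needs a \emph{lower} bound on $\partial_t\langle A_\cdot\rangle_t$; the paper's Proposition \ref{Prop:BoundBracket} gives $\partial_t\langle A_\cdot\rangle_t\ge c\sum_k\min\bigl(\delta\bar X_k\,\nabla X^\wedge_k,(\nabla X^\wedge_k)^2\bigr)$, and this bound is a property of the sticky coupling (it quantifies the mean-square displacement of an uncoupled update) combined with controls that the gradients of $\bX^\wedge$ are neither too large nor too small (events $\cA_1$, $\cC_1$, $\cC_3$, $\cC_4$), the latter obtained from Proposition \ref{Prop:AbsCont} plus equilibrium estimates. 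The ``sum of squared local increments'' bound you cite from Section \ref{sec:lower} is an \emph{upper} bound on the bracket and goes the wrong way; under the grand coupling you invoke, the martingale fluctuations of $A$ are in fact useless for driving $A$ to zero. Your Step 1 (heat-equation decay of $\E[A_t]$ down to $N^{3/2}$ by time $t_{N,\delta}-KN^2$) and the final $O(\log N)$ brute-force step are in the spirit of the paper, but as written the core of the argument does not go through.
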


By the triangle inequality, \eqref{eq:thth} is an immediate consequence of the two propositions above. 

\medskip

Even though Proposition \ref{Prop:AbsCont} is a consequence of Proposition \ref{Prop:Coalesce}, our proof of the latter will actually rely on the former result; see also Remark \ref{rem:mono} below. Consequently, we will need an independent proof of Proposition \ref{Prop:AbsCont} and this will be carried out in Section \ref{Sec:AbsCont}. The remainder of this section is concerned with the proof of Proposition \ref{Prop:Coalesce}.

\medskip

From now on, $x\in \Omega_N$ is fixed; however all the constants that will appear below will be \emph{independent} of this chosen $x$. In the forthcoming Section \ref{thecoupling}, we construct the processes
$\bX^\wedge$ and  $\bX^x$ on the same probability space in such a way that (recall \eqref{coord})
\begin{equation}\label{conserve}
\forall t\ge 0,\ \bX^\wedge(t)\ge \bX^x(t).
\end{equation}
In the remainder of this proof $\bbP$ denotes the probability associated with the probability space on which this coupling is constructed.
We let $A_t$ denote the area comprised between the graphs of $\bX^x$ and $\bX^\wedge$:
$$ A_t := \sum_{k=1}^{N-1} \left(X_k^\wedge(t) - X^x_k(t)\right)\;,$$
and we aim at bounding its hitting time of $0$
$$\tau := \inf\{t\ge 0: A_t = 0\}.$$
As the ordering given by \eqref{conserve} implies that $\bX^\wedge(t)= \bX^x(t)$ for $t\ge \tau$, we have 
\begin{equation}
 \| P_t^x - P_t^\wedge \|_{TV}\le \bbP[\tau>t].
\end{equation}
Of course the distribution of $\tau$ depends tremendously on the coupling. For instance, the reader can check that for the coupling provided by Proposition \ref{grandgradient} which satisfies \eqref{conserve}, we have $\tau=\infty$ with probability one.
The coupling presented below is constructed with the aim of minimizing the merging time $\tau$.
The proof is split into three main steps:
\begin{enumerate}
\item The area passes below $N^{3/2}$ by time $t_{\delta/2}$ with large probability.
\item Within an additional time of order $N^2$, the area is very likely to decline from $N^{3/2}$ to  $N^{-1}$. 
\item 
The area goes from $N^{-1}$ down to $0$ within a time of order $\log N$ with large probability.
\end{enumerate}
The above steps clearly ensure that the event $\tau>t_\delta$ has a small probability, which will conclude the proof of Proposition \ref{Prop:Coalesce}.

The first step is a rather simple consequence of the fact that $\bbE[X_k^\wedge(t) - X^x_k(t)]$ is a solution of the discrete heat equation.

 The third step is a brute force argument inspired by Randall and Winkler~\cite{RW05}. More precisely, for this last step we build on the main idea of \cite{RW05}, but we improve it in a quantitative manner using estimates on the minimum gradient of $\bX^{\wedge}$.

The second step above is by far the most delicate one. The strategy is to introduce a sequence of intermediate thresholds between the values $N^{3/2}$ and $N^{-1}$, and then analyse the associated hitting times for the area process.  
Our control of these hitting times relies on diffusive estimates for the supermartingale $A_t$ and on estimates on the corresponding angle bracket process. One of the ingredients of the latter estimates is  a fine control of the  gradients of $\bX^{\wedge}$, which is in turn derived from a combination of equilibrium estimates and Proposition \ref{Prop:AbsCont}.

\subsection{The coupling and preliminary lemmas}\label{thecoupling}
We start by defining the coupling $\P$. For any $k\in\lint 1,N-1\rint$ and any configuration $x\in\Omega_N$, we define the ``interval of resampling of the $k$-th coordinate'' as $I(x,k) := [x_{k-1},x_{k+1}]$ and write $| I(x,k) | = \nabla x_k$ for its length, where the ``gradient" $  \nabla x_k$ is defined by 
\begin{equation}\label{Eq:Gradient}
\nabla x_k := x_{k+1}-x_{k-1}\;.
\end{equation}
For simplicity we sometimes  use the short-hand notation
$I^\wedge = I(\bX^\wedge(t_-),k)$ and $I^x = I(\bX^x(t_-),k)$.
 In our construction, we are going to try, at each resampling event, to couple $\bX^x_k$ and $\bX^{\wedge}_k$ with the maximal probability.
Letting $\Beta_{\alpha}(I)$ denoting the distribution with density given by (when $I=[a,b]$)
$$B_{\alpha}[a,b](x):= \frac{\gG(2\alpha) (x-a)^{\alpha-1}(b-x)^{\alpha-1}}{\gG(\alpha)^2 (b-a)^{2\alpha -1}} \ind_{[a,b]}(x)\;.$$
we set 
\begin{equation}\label{pnonexplicit}\begin{split}
p=p(t,k)&:= 1 - \| \Beta_{\alpha}( I^x)-\Beta_{\alpha}( I^{\wedge}) \|_{TV}\\
&=
\int_{I^\wedge \cap I^x}
\min( B_{\alpha}(I^x), B_{\alpha}( I^{\wedge}))(x) \dd x, 
\end{split}\end{equation}
and $q:=1-p$.
In the case $\alpha=1$ we have
\[p= \big| I^\wedge \cap I^x\big|/\max(\nabla X^\wedge_k(t_-), \nabla X^x_k(t_-))\] but there is no such simple expression for $p$ when $\alpha>1$; however we will be able to provide  good estimates for it (cf. Lemma \ref{Lemma:Bndq}).


Then, 
we define $\nu_1$, $\nu_2$, $\nu_3$ to be the probability measures with respective densities $\rho_1, \rho_2$, and $\rho_3$ given by
\begin{equation}\label{lesrhos}
 \begin{split}
  \rho_1&= q^{-1} \left[B_{\alpha}(I^x)-B_{\alpha}(I^{\wedge})\right]_+,\\
    \rho_2&= p^{-1}\min( B_{\alpha}(I^x), B_{\alpha}( I^{\wedge})) ,\\
    \rho_3&=q^{-1} \left[B_{\alpha}(I^{\wedge})-B_{\alpha}(I^x)\right]_+.
 \end{split}
\end{equation}
As a consequence of our assumption $\alpha\ge 1$ (which makes the functions $B_{\alpha}(I)$ unimodal), and the fact that by monotonicity both extremities of $I^{\wedge}$ are larger than their counterparts in $I^x$, the supports of $\rho_1$ and $\rho_3$ are intervals $I_1$ and $I_3$, the lower extremity of $I_3$ being larger or equal than the upper extremity of $I_1$. To see this it is sufficient to check that there exists at most one value $u$ such that the equation
$B_{\alpha}(I^x)(u)=B_{\alpha}(I^{\wedge})(u)>0$ has at most one solution. We refer to Figure \ref{fig:dens1} for the case $\alpha=1$ and to Figure \ref{fig:dens} for the general case of a unimodal density.  
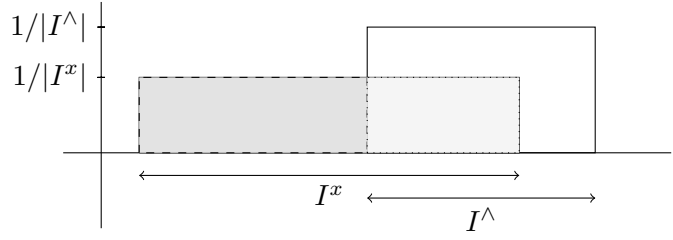
\begin{figure}
\centering
	\begin{tikzpicture}[scale=1.0]
	\draw[-,thin,color=black] (-1,0) -- (7,0);
	\draw[-,thin,color=black] (0,0) -- (0,1) -- (5,1) -- (5,0) -- (0,0);
	\draw[-,thin,color=black] (3,0) -- (3,5/3) -- (6,5/3) -- (6,0) -- (3,0);
	
	\draw[<->,color = black] (0,-0.3) -- (2.5,-0.3) node[below]{$I^x$} -- (5,-0.3);
	\draw[<->,color = black] (3,-0.6) -- (4.5,-0.6) node[below]{$I^\wedge$} -- (6,-0.6);
	
	\draw[dashed,fill=black!11] (0,0) rectangle (3,1);
	
	\draw[dotted,fill=black!4] (3,0) rectangle (5,1);
	
	\draw[-,thin,color=black] (-0.5,-1) -- (-0.5,1) -- (-0.45,1) -- (-0.55,1) node[left]{$1/|I^x|$} -- (-0.5,1) -- (-0.5,5/3) -- (-0.45,5/3) -- (-0.55,5/3) node[left]{$1/|I^\wedge|$} -- (-0.5,5/3) -- (-0.5,2);
	
	\end{tikzpicture}
	\caption{A plot of the densities $B_{\alpha}(I^x)$ and $B_{\alpha}(I^\wedge)$ when $\alpha=1$. The light, respectively dark, shaded region has area $p$, respectively $q$. }
\label{fig:dens1}
\end{figure}

\medskip

Our coupled dynamics is defined as follows. 
Each pair of  coordinates $(X^x_k,X^\wedge_k)$, $k=1,\dots,N-1$, is updated with rate one independently, and when an update occurs at time $t$ then 
\begin{itemize}
\item with probability $p$, the two new coordinates are set to the same value $X_k^x(t)=X_k^\wedge(t)$ drawn from the distribution $\nu_2$,
\item with probability $q$, the two new coordinates are sampled independently with respective distributions $\nu_1$ and $\nu_3$, and therefore satisfy $X_k^x(t)\le X_k^\wedge(t)$.
\end{itemize}
For convenience, we set
$$ \delta X_k(t) := X^\wedge_k(t) - X^x_k(t)\;,$$
 (in the remainder of the proof $\delta$ as a positive parameter  is not used anymore so that this should not yield confusion).
We also introduce the ``mean'' interfaces $\bar{\bX}^\wedge$ and $\bar{\bX}^x$ by setting
$$ \bar{X}^\wedge_k(t) := \frac{X^\wedge_{k-1}(t) + X^\wedge_{k+1}(t)}{2}\;,$$
and similarly for $\bar{\bX}^x$. Note that $\bar{X}^\wedge_k(t)$ is the midpoint of the interval of resampling of the $k$-th coordinate. We finally set
$$ \delta \bar{X}_k(t) := \bar{X}^\wedge_k(t) - \bar{X}^x_k(t)\;.$$

\medskip
We now collect a few facts on our coupling. We start by showing that the probability $q(t,k)$ can be fairly approximated by 
\begin{equation}
 Q(t,k):=\min\left(\frac{\delta {\bar X}_k(t_-)}{\max(\nabla X^\wedge_k(t_-),\nabla X^x_k(t_-))}, 1 \right)\;,
\end{equation}
and that, as soon as $q$ is close enough to $1$, the overlap between the resampling intervals represents a small fraction of the largest resampling interval. This second bound will be convenient in order to bound from below the derivative of the bracket of the area, see the proof of Proposition \ref{Prop:BoundBracket}.

\begin{lemma}\label{Lemma:Bndq}
For any $\alpha\ge 1$, there is a constant $C$ (that depends on $\alpha$) such that for all $t\ge 0$ and $k\in \lint 1,N-1\rint$
\begin{equation}\label{Eq:Bndq}
 \frac{1}{C} Q(t,k) \le q(t,k) \le  C  \,  Q(t,k).
\end{equation}
Furthermore, there exists a constant $c_1>0$ such that
\begin{equation}\label{Eq:Bndq2}
q(t,k) \ge 1- c_1 \quad \Rightarrow \quad \max(|I^x|, |I^{\wedge}|)\ge  2\big| I^{x}\cap I^{\wedge} \big|\;,
\end{equation}
(where the notation used is the one introduced below \eqref{Eq:Gradient}).
\end{lemma}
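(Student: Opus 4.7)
By the monotonicity \eqref{Eq:orders} we may write $I^x = [a,b]$ and $I^\wedge = [a',b']$ with $a'\geq a$, $b'\geq b$. Set $L_1 := b-a$, $L_2 := b'-a'$, $L := \max(L_1,L_2)$, and $d := \delta\bar X_k(t_-) = \tfrac12\bigl((a'-a)+(b'-b)\bigr)$, so that $Q(t,k) = \min(d/L,1)$, $a'-a,\,b'-b\leq 2d$, and $|L_2-L_1|\leq 2d$. I use the scaling $B_\alpha(I)(u) = |I|^{-1}\tilde B_\alpha((u-a)/|I|)$, with $\tilde B_\alpha$ the symmetric Beta density on $[0,1]$; for $\alpha\geq 1$, $\tilde B_\alpha$ is continuous, symmetric about $1/2$, unimodal, bounded above by some $M_\alpha$, bounded below by some $c_\alpha>0$ on each fixed compact subinterval of $(0,1)$, and has finite total variation.

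For the upper bound in \eqref{Eq:Bndq}, the plan is to interpolate through $B_\alpha(J)$ with $J := [a,a+L_2]$ and apply the triangle inequality. The TV distance between $B_\alpha(J)$ and $B_\alpha(I^\wedge)$ is that of two translates (by $a'-a\leq 2d$) of the rescaled Beta on intervals of length $L_2$, and the classical bound $\|f-f(\cdot-\varepsilon)\|_{L^1}\leq|\varepsilon|\cdot V(f)$ yields $\leq C_\alpha d/L$. The TV distance between $B_\alpha(I^x)$ and $B_\alpha(J)$ compares two symmetric Betas with common left endpoint $a$ and lengths $L_1\leq L_2$; splitting the integral over $[a,a+L_1]$ and $[a+L_1,a+L_2]$ and using continuity of $\tilde B_\alpha$ (with a direct elementary computation for $\alpha=1$) gives $\leq C_\alpha(L_2-L_1)/L_2\leq 2C_\alpha d/L$. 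Combining yields $q\leq CQ$ for $d<L$; for $d\geq L$, $q\leq 1 = Q$ is trivial.

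For the lower bound in \eqref{Eq:Bndq}, let $X\sim B_\alpha(I^x)$, $Y\sim B_\alpha(I^\wedge)$, and let $\bar x,\bar y$ be the midpoints of $I^x,I^\wedge$ (so $\bar y-\bar x = d$). Choosing the test point $t := \tfrac12(\bar x+\bar y)$ and exploiting the symmetry of each density about its midpoint gives
$$ q \;\geq\; \bbP(X\leq t) - \bbP(Y\leq t) \;=\; F_\alpha\Bigl(\tfrac12+\tfrac{d}{2L_1}\Bigr) - F_\alpha\Bigl(\tfrac12-\tfrac{d}{2L_2}\Bigr)\,.$$
When $d\leq L_1/2$ both arguments lie in $[1/4,3/4]$ and the mean value theorem yields $q\geq c_\alpha\bigl(\tfrac{d}{2L_1}+\tfrac{d}{2L_2}\bigr)\geq c_\alpha d/(2L)$. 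When $d>L_1/2$, a short case split on whether $a'-a\geq L_1/4$ shows that either $q\geq\bbP(X\in[a,a'])\geq F_\alpha(1/4)$, or else $b'-b>3L_1/4$ (forcing $L_2>\tfrac32 L_1$ and $(b-a')/L_2<2/3$, so $q\geq\bbP(Y>b)\geq 1-F_\alpha(2/3)$); in either case $q\geq c'_\alpha\geq c'_\alpha Q$ since $Q\leq 1$.

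For \eqref{Eq:Bndq2} I argue by contraposition: assume $b-a'>L/2$. Without loss of generality $L = L_2\geq L_1$, and then $L_1 > L/2$ since $b-a'\leq L_1$. Setting $J' := [a'+(b-a')/4,\,a'+3(b-a')/4]$ of length $(b-a')/2>L/4$, a short computation with $p := (b-a')/L_1\in(1/2,1]$ shows $(u-a)/L_1\in[1-3p/4,\,1-p/4]\subset[1/4,7/8)$ and $(u-a')/L_2\in[1/8,3/4]$ for all $u\in J'$, each contained in a fixed compact subinterval of $(0,1)$ where $\tilde B_\alpha\geq c'_\alpha$. Hence $\min(B_\alpha(I^x),B_\alpha(I^\wedge))\geq c'_\alpha/L$ on $J'$, so $1-q = \int\min\,du\geq c'_\alpha/4$. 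Choosing $c_1<c'_\alpha/4$ then forces $q\geq 1-c_1$ to imply $b-a'\leq L/2$. The main technical hurdle is the explicit $L^1$ comparison of Beta densities in the upper bound of \eqref{Eq:Bndq}; all other steps reduce to symmetry and unimodality of $\tilde B_\alpha$ together with elementary CDF estimates.
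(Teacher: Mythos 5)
Your argument is correct in substance, but it follows a genuinely different route from the paper's. The paper reduces, by scaling and a symmetry, to comparing a generic symmetric unimodal density $\rho$ on $[0,1]$ with its affine image $\rho_{[a,a+b]}$, shows that $q=\int[\rho-\rho_{[a,a+b]}]_+$ is monotone in the offset $a$ and the length $b$, and then reads off the two-sided bound \eqref{Eq:TVBnd} (namely $\rho(1/4)a/4\le q\le \|\rho\|_\infty a$) from a graphical decomposition (Figure \ref{fig:dens}); the implication \eqref{Eq:Bndq2} also comes from this monotonicity, by comparison with the half-overlap configuration $[1/2,3/2]$. You instead prove the upper bound in \eqref{Eq:Bndq} by a triangle inequality through the intermediate interval $[a,a+L_2]$, controlling the translation term via the $L^1$-modulus of continuity of a BV density and the dilation term by a direct comparison of same-endpoint Betas; the lower bound by evaluating the two CDFs at the midpoint between the two interval centres and using the mean value theorem; and \eqref{Eq:Bndq2} by contraposition, lower-bounding $\int\min(B_\alpha(I^x),B_\alpha(I^\wedge))$ on the middle half of the overlap. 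Your version works directly with $\delta\bar X_k$ (the paper proves \eqref{I1I2} in terms of $\max(|l_2-l_1|,|r_2-r_1|)$, which is equivalent up to a factor $2$ under the monotone ordering), and it avoids both the monotonicity-in-$(a,b)$ claim and the picture, at the cost of more case analysis. Two small repairs are needed, both trivial: (i) your explicit trivial regime $d\ge L$ is not enough, because for $d$ comparable to $L$ the shorter interval can have length as small as $L-2d$ and the translation term $C d/L_2$ then has no uniform constant; you should invoke $q\le 1\le 4Q$ already when $d\ge L/4$ and run the interpolation only for $d\le L/4$, where $\min(L_1,L_2)\ge L/2$; (ii) in the lower bound, "both arguments lie in $[1/4,3/4]$ when $d\le L_1/2$" also requires $d\le L_2/2$; if that fails, $F_\alpha\bigl(\tfrac12-\tfrac{d}{2L_2}\bigr)\le F_\alpha(1/4)$ and $q\ge \tfrac12-F_\alpha(1/4)$ is already a constant, so the conclusion persists. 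Also, the dilation step needs a bit more than continuity of $\tilde B_\alpha$ (bounded variation, which you already invoke for the translation step, suffices). Finally, note that the paper's formulation \eqref{I1I2}, extended in Remark \ref{extension} to intervals without the ordering assumption, is reused in Appendix \ref{App:roughUB}; your bound proves the lemma as stated, but that later use would require the same small extension of your upper bound.
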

\begin{proof}
%

Note that \eqref{Eq:Bndq} is simply a result concerning the total variation between two $\Beta_{\alpha}$ variables defined on two intervals $I_1:=[l_1,r_1]$ and $I_2:=[l_2,r_2]$ such that $l_1\le l_2$ and $r_1\le r_2$. 
Recalling the definition \eqref{pnonexplicit} 
\begin{equation}
 q=\int_{0}^1 \left[ B_{\alpha}[l_1,r_1](u)- B_{\alpha}[l_2,r_2](u) \right]_+ \dd u,
\end{equation}
and \eqref{Eq:Bndq}  holds if we can prove  (for a different constant $C$) that 
\begin{equation}\label{I1I2}
\frac{1}{C} \min \left(\frac{\max(|l_2-l_1|, |r_2-r_1|)}{\max(r_1-l_1,r_2-l_2)},1 \right) \le  q \le C\,\frac{\max(|l_2-l_1|, |r_2-r_1|)}{\max(r_1-l_1,r_2-l_2)}.
\end{equation}
By symmetry, we can  assume that $I_1$ is the largest of the two intervals and  by scaling invariance we can assume without loss of generality that  $I_1=[0,1]$ and $I_2 =: [a,a+b]$, $a> 0 $, $b\in (1-a,1]$, in which case 
$$\max(|l_2-l_1|, |r_2-r_1|)/\max(r_1-l_1,r_2-l_2)=a.$$ We can further assume that $a<1$ as the result is trivially valid when $a\ge 1$.


For better readability of the proof we replace $\Beta_{\alpha}$ by a generic unimodal function $\rho$ which is positive on $(0,1)$, integrates to $1$ and is symmetric around $1/2$. We replace $\Beta_{\alpha}[a,a+b]$
by $\rho_{[a,a+b]}=b^{-1}\rho((\cdot -a)/b)$ with the convention that $\rho(u) = 0$ for $u\notin [0,1]$.

\medskip

\begin{figure}[htbp]
\centering
\leavevmode
\begin{center}
\resizebox{10cm}{!}{\input{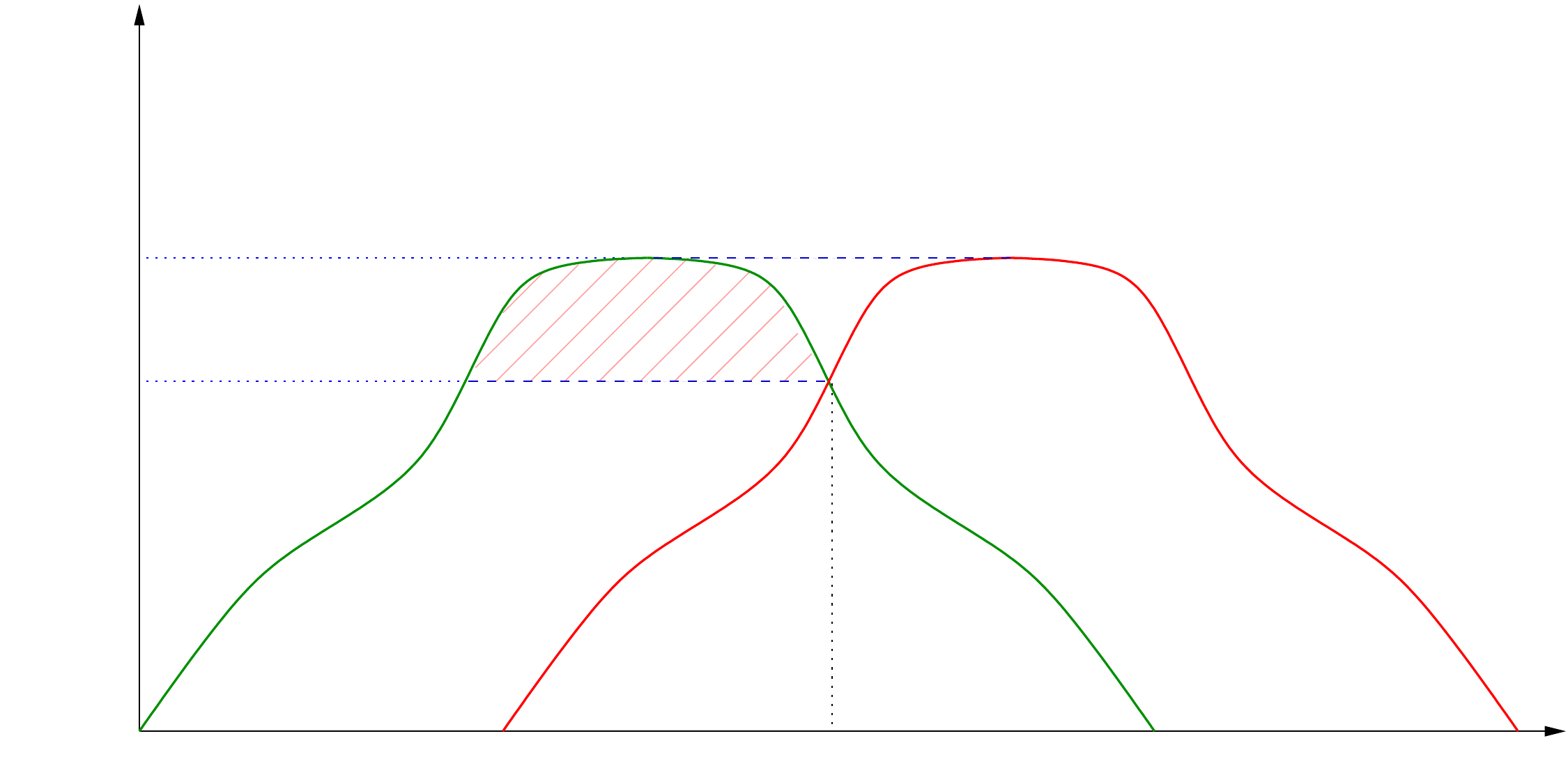_t}}
 \end{center}
\caption{
We present a graphic proof of the inequalities \eqref{theup} and 
\eqref{thelow}. Using symmetry and unimodality of $\rho$ we see that the vertical stripe of constant width delimited by the two increasing portions of the graphs 
$\rho$ and $\rho_{[a,1+a]}$ can be divided into $3$ regions $A$, $B$ and $C$, the separation 
being given by the horizontal line $y=\rho(\tfrac{1+a}{2})$ and decreasing part of the graph of $\rho$. The integral we wish to estimate  $\int_{[0,1]} \left[ \rho(u)-  \rho_{[a,1+a]}(u)\right]_+ \dd u $
 corresponds to $|A|+|B|$ ($|\cdot|$ standing here for Lebesgue measure) while the upper and lower bounds $\|\rho \|_{\infty}a$ and $\rho\left(\tfrac{1+a}{2}\right)a$ correspond  respectively to $|A|+|B|+|C|$ and $|A|$. 
 }
\label{fig:dens}
\end{figure}
We first let the reader check that 
$\int_{[0,1]} \left[ \rho(u)-  \rho_{[a,a+b]}(u)\right]_+ 
\dd  u$ is increasing in $a$ and $b$  simply because the integrand displays the same monotonicities. 

\medskip

For \eqref{Eq:Bndq2}, we simply observe, using these monotonicities in $a$ and $b$, that if $a< 1/2$
then 
\begin{equation*}
 \int_{[0,1]} \left[ \rho(u)-  \rho_{[a,a+b]}(u)\right]_+ \dd u
 <  \int_{[0,1]} \left[ \rho(u)-  \rho_{[1/2,3/2]}(u)\right]_+ \dd u\;.
\end{equation*}
As a consequence, setting $c_1= 1-\int_{[0,1]} \left[ \rho(u)-  \rho_{[1/2,3/2]}(u)\right]_+ \dd u$, we deduce that if $q \ge 1-c_1$, we have $a \ge 1/2$ and $|I_1 \cap I_2 | \le 1/2$ thus yielding \eqref{Eq:Bndq2}.\\

\noindent To prove \eqref{Eq:Bndq}, we show that for every $a\in [0,1]$
\begin{equation}\label{Eq:TVBnd}
\frac{\rho(1/4) a}{4} \le \int_{[0,1]} \left[ \rho(u)-  \rho_{[a,a+b]}(u)\right]_+ \dd u \le \|\rho\|_{\infty}a.
\end{equation}
which allows to prove \eqref{I1I2} with $C= \max \left( \|\rho\|_{\infty},4/ \rho(1/4)\right)$.
By monotonicity it is sufficient to  check the upper-bound for $b=1$. Figure \ref{fig:dens} provides a graphical proof of the following inequality
\begin{equation}\label{theup}
\int_{[0,1]} \left[ \rho(u)-  \rho_{[a,1+a]}(u)\right]_+ \dd u \le \|\rho\|_{\infty}a.
\end{equation}
Now concerning the lower bound, using again  the graphical proof of Figure \ref{fig:dens}
(by symmetry of $\rho$ the two curves intersect at $(1+a)/2$), we have for $a\le 1/2$
\begin{equation}\label{thelow}
\int_{[0,1]} \left[ \rho(u)-  \rho_{[a,1+a]}(u)\right]_+ \dd u
\ge a \rho\left(\frac{1+a}{2}\right) \ge \rho(1/4)a\;,
\end{equation}
and thus the l.h.s.\ being increasing in $a$ we conclude that for all $a\in(0,1)$ (the factor $1/2$ is present so that the inequality is also valid for $a\in(1/2,1]$) 
$$\int_{[0,1]} \left[ \rho(u)-  \rho_{[a,1+a]}(u)\right]_+ \dd u
\ge \frac{\rho(1/4)a}{2} 
$$
Using symmetry and invariance by translation at the first line and the triangle inequality for the total variation distance at the second line, we have 
\begin{multline*}
\int_{[0,1]} \left[ \rho(u)-  \rho_{[a,1]}(u)\right]_+ \dd u
\\= \frac{1}{2}\left(\int_{[0,1]} \left[ \rho(u)-  \rho_{[a,1]}(u)\right]_+ \dd u +
\int_{[0,1]} \left[ \rho_{[a,1]}(u)-  \rho_{[a,1+a]}(u)\right]_+ \dd u  \right) \\ 
  \ge  \frac{1}{2}\int_{[0,1]} \left[ \rho(u)-  \rho_{[a,1+a]}(u)\right]_+ \dd u \ge  \frac{\rho(1/4)a}{4}.
\end{multline*}
This lower bound is also valid for $b\in(1-a,1)$ by monotonicity in $b$, thus completing the proof of \eqref{Eq:TVBnd}.
\end{proof}
 
 \begin{remark}\label{extension}
Let us observe for latter use that the upper bound in \eqref{I1I2} is also valid without the assumption $l_1\le l_2$, $r_1\le r_2$ (at the cost of taking $C$ twice as large).   
This can be deduced from the other case. Indeed,  assume without loss of generality that $[l_1,r_1]\subset [l_2,r_2]$ and set $I_3=[l_1,r_2]$. Then, using the previous bounds for the pairs $(I_1,I_3)$ and $(I_2,I_3)$,  
\begin{multline*}
 \| \Beta_{\alpha}( I_1)-\Beta_{\alpha}( I_2) \|_{TV}\\
 \le  \| \Beta_{\alpha}( I_1)-\Beta_{\alpha}( I_3) \|_{TV}+
  \| \Beta_{\alpha}( I_3)-\Beta_{\alpha}( I_2) \|_{TV}\\
  \le C \,\frac{|r_2-r_1|}{r_2-l_1}  +C\,\frac{|l_2-l_1|}{r_2-l_2}\le C\,\frac{\max(|l_2-l_1|, |r_2-r_1|)}{\max(r_1-l_1,r_2-l_2)}.
\end{multline*}

 \end{remark}
 Using the expression for the generator \eqref{genlinear} and the fact that our coupling preserves monotonicity the reader can check that for every $t\ge 0$
 \begin{equation*}
  \partial_{s}\bbE[ A_{t+s} \ | \ \cF_t ] \big|_{s=0}= \frac12\left(X^x_1(t)-X^{\wedge}_1(t)\right)+ \frac12\left(X^x_{N-1}(t)-X^{\wedge}_{N-1}(t)\right)\le 0,
 \end{equation*}
and hence that $A_t$ is a supermartingale for the filtration $(\cF_t)_{t\ge 0}$ defined by
$\cF_t:= \sigma( \bX^x_s, \bX^{\wedge}_s, s\le t)$. We write $\{\langle A_\cdot\rangle_t, t\ge 0\}$,   for the associated angle bracket process, namely the increasing predictable process that compensates the square of the martingale part of $A_t$.

 \begin{proposition}\label{Prop:BoundBracket}
There exists a constant $c>0$ such that for all $N$ large enough and for all $t\ge 0$, we have
$$ \partial_t \langle A_\cdot\rangle_t \ge c \sum_{k=1}^{N-1} \min\Big(\delta \bar{X}_k(t_{-}) \nabla X^\wedge_{k}(t_{-}) , \big(\nabla X^\wedge_{k}(t_{-})\big)^2 \Big)\;.$$
\end{proposition}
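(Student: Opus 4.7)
The plan is to start from the identity $\partial_t \langle A_\cdot \rangle_t = \sum_{k=1}^{N-1} \E[(\Delta_k A)^2 \mid \cF_{t-}]$, where $\Delta_k A$ denotes the jump of $A$ when the $k$-th coordinate is resampled (each such update has rate $1$). Only $\delta X_k$ changes at such an update, so $\Delta_k A = \delta X_k^{\mathrm{new}} - \delta X_k(t_-)$ with $\delta X_k(t_-)$ being $\cF_{t-}$-measurable, and one may lower bound the summand by the conditional variance $\var(\delta X_k^{\mathrm{new}} \mid \cF_{t-})$. Under our coupling, $\delta X_k^{\mathrm{new}} = 0$ with probability $p(t,k)$ and $\delta X_k^{\mathrm{new}} = Y^\wedge_k - Y^x_k$ with $Y^\wedge_k \sim \nu_3$ and $Y^x_k \sim \nu_1$ independent with probability $q(t,k)$. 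Since $p \nu_2 + q \nu_1 = \Beta_\alpha(I^x)$ has mean $\bar X^x_k(t_-)$ and similarly for $\bar X^\wedge_k(t_-)$, one computes $\E[\delta X_k^{\mathrm{new}}\mid\cF_{t-}] = \delta \bar X_k(t_-)$, and the law of total variance yields
$$\var(\delta X_k^{\mathrm{new}} \mid \cF_{t-}) = \frac{p (\delta \bar X_k)^2}{q} + q \bigl[\var_{\nu_1}(Y^x) + \var_{\nu_3}(Y^\wedge)\bigr].$$

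I would then split the analysis based on a fixed threshold $q_0 \in (0, 1)$. In the small-$q$ regime $q(t,k) \le q_0$, one has $p \ge 1 - q_0$, and Lemma~\ref{Lemma:Bndq} combined with $\max(\nabla X^\wedge_k, \nabla X^x_k) \ge \nabla X^\wedge_k$ gives $q \le C \delta \bar X_k / \nabla X^\wedge_k$. The first term then yields
$$\var(\delta X_k^{\mathrm{new}}) \;\ge\; c\, \delta \bar X_k \nabla X^\wedge_k \;\ge\; c \min\bigl(\delta \bar X_k \nabla X^\wedge_k, (\nabla X^\wedge_k)^2\bigr).$$
In the large-$q$ regime $q(t,k) > q_0$, Lemma~\ref{Lemma:Bndq} used in the reverse direction gives $\delta \bar X_k \ge (q_0/C) \nabla X^\wedge_k$, so the target $\min\bigl(\delta \bar X_k \nabla X^\wedge_k, (\nabla X^\wedge_k)^2\bigr)$ is comparable to $(\nabla X^\wedge_k)^2$, and it suffices to show $\var_{\nu_3}(Y^\wedge) + \var_{\nu_1}(Y^x) \ge c_\alpha (\nabla X^\wedge_k)^2$.

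This last variance lower bound is the crux of the argument. My plan is to rescale so that the larger of the two resampling intervals coincides with $[0, 1]$, leaving the smaller one described by two parameters $(a, b) \in [0,1]^2$. Since $\alpha \ge 1$ ensures unimodality of the Beta density, the supports $I_1$ and $I_3$ of $\nu_1$ and $\nu_3$ are intervals (as noted after~\eqref{lesrhos}), and the maps $(a, b) \mapsto \var_{\nu_1}, \var_{\nu_3}$ are continuous. They vanish only when the two intervals coincide, i.e., when $q = 0$, so on the compact set $\{(a, b) : q(a, b) \ge q_0\}$ they are bounded below by a positive constant depending only on $\alpha$ and $q_0$; undoing the rescaling contributes the factor $(\nabla X^\wedge_k)^2$.

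The main obstacle is precisely this uniform variance lower bound in the large-$q$ regime, since $\nu_3$ could a priori be concentrated near the crossing point of the two Beta densities. The compactness argument above is the cleanest route, but a quantitative alternative is available when $q \ge 1 - c_1$: then~\eqref{Eq:Bndq2} forces $I^\wedge \setminus I^x$ (or $I^x \setminus I^\wedge$) to contain a macroscopic fraction of the larger interval, on which $\nu_3$ coincides up to normalization with the full Beta density, allowing a direct estimate $\var_{\nu_3}(Y^\wedge) \gtrsim (\nabla X^\wedge_k)^2$. The intermediate range $q_0 < q < 1 - c_1$ can then be handled by the compactness argument.
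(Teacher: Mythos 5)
Your proposal is correct and essentially reproduces the paper's proof: the same identification of $\partial_t\langle A_\cdot\rangle_t$ as the sum over sites of the conditional expected squared jump, the same lower bound $q\,(\var_{\nu_1}+\var_{\nu_3})+p\,(\delta\bar X_k)^2/q$ (which the paper derives by computing $J$ and minimizing over $\delta X_k$, and you derive via the law of total variance), and the same two-regime split driven by Lemma \ref{Lemma:Bndq}, with your ``quantitative alternative'' in the large-$q$ regime being exactly the paper's use of \eqref{Eq:Bndq2} to get $\var_{\nu_3}(Y^\wedge)\gtrsim(\nabla X^\wedge_k)^2$ from the fact that $\nu_3$ restricted to $I^\wedge\setminus I^x$ is the Beta density above its median. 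The only genuine deviation, the compactness argument for an intermediate range $q_0<q<1-c_1$, is unnecessary (simply take $q_0=1-c_1$ so the two quantitative regimes cover everything) and is also slightly imprecise as stated, since $\{q\ge q_0\}$ is not compact in your parametrization (the two resampling intervals may be disjoint and arbitrarily far apart), although that region is trivial because there $\nu_1,\nu_3$ are the full Beta laws.
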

\begin{remark}
The proof will actually establish the same bound but with $\nabla X^\wedge_{k}(t)$ replaced by the maximum of the latter and $\nabla X^x_{k}(t)$.
\end{remark}
\begin{proof}
First of all, recalling the coupling defined in Section \ref{thecoupling}, we have
$$ \partial_t \langle A_\cdot \rangle_t = \sum_{k=1}^{N-1} p(t,k) (\delta X_k(t_-))^2 + q(t,k) J(t,k)\;,$$
where $J(t,k)$ is the mean square displacement corresponding
to an instantaneous uncoupled jump of $X^\wedge_k - X^x_k$ at time $t$. We are going to prove a lower bound for each term in the sum, and as before we omit from now on the dependence in $k$ and $t$.
Without loss of generality we assume that $\nabla X^\wedge \ge \nabla X^x$.

\medskip

We let $Z^x$ and $Z^\wedge$ denote the two independent variables with respective distribution $\nu_1$ and $\nu_3$ (whose densities are described in Equation \eqref{lesrhos}) that are used in the coupling.
We are going to prove first that 
\begin{equation}\label{lefirst}
 p (\delta X)^2 + qJ \ge q (\var( Z^x)+ \var (Z^{\wedge}))+  \frac{p}{q} (\delta \bar X)^2.
\end{equation}
This is achieved by computing explicitly $J$ (here $E$ denotes expectation for the pair of variables $(Z^x,Z^\wedge)$)
\begin{equation}
J = E[ (Z^{\wedge}-Z^{x}-\delta X)^2 ]
=\var( Z^x)+ \var (Z^{\wedge})+ (E[ Z^{\wedge}-Z^{x}]-\delta X)^2.
\end{equation}
Replacing $J$ by its value, observing that $\delta \bar X=  q E[ Z^{\wedge}-Z^{x}]$,
and taking the minimum over all possible values for $\delta X$
we obtain that the l.h.s.\ of \eqref{lefirst} is larger than
$$ q (\var( Z^x)+ \var (Z^{\wedge}))+  \min_{u\in \bbR} \left[ q(q^{-1}\delta \bar X -  u )^2 + p u^2 \right],$$
which is the desired result.

To conclude from \eqref{lefirst}, we consider $c_1$ from 
\eqref{Eq:Bndq2} in Lemma \ref{Lemma:Bndq}.
If $p \ge c_1$ then the r.h.s.\ of \eqref{lefirst}
is larger than $c_1(\delta \bar X)^2/q$ and we can conclude using the upper bound in \eqref{Eq:TVBnd}.

\noindent When $p\le c_1$ then we use  \eqref{Eq:Bndq2}
which ensures that
%
$$|I^\wedge \setminus I^x|\ge |I^\wedge|/2\;,$$
so that with probability at least $1/2$, $Z^\wedge$ coincides with a $\Beta_{\alpha}(I^\wedge)$ r.v.~conditioned on being larger than its median $\bar{X}^\wedge$. Since the variance of the latter conditional law is of order $|I^\wedge|^2 = (\nabla X^\wedge)^2$, so that for some adequate choice of $c>0$ we have
we have
\begin{equation}\label{Eq:Varineq}
 q\var (Z^{\wedge})\ge c q(\nabla X^{\wedge})^2\ge c(1-c_1)( \nabla X^{\wedge})^2,
\end{equation}
which concludes the proof.
\end{proof}

\subsection{Successive hitting times}

To prove Proposition \ref{Prop:Coalesce}, our argument is to show that 
by time $t_{\delta/2}+N^2$, the area has become very small (smaller than $N^{-1}$) and then to use some brute force argument to show that 
$\tau$ cannot be much larger.

\medskip

Our strategy to control the decay of the area requires several steps and 
we introduce the successive hitting times by the area of a sequence of well-chosen thresholds. For $\eta > 0$ small, we define
$$ \cT_i := \inf\{t\ge t_{\delta/2}: A_t \le N^{\frac  32 - i\eta}\}\;,\quad i\ge 1\;.$$
We first show that with large probability $\cT_2$ is equal to $t_{\delta/2}$. Then setting $L:= \min\{i\ge 1: \frac32 - i\eta < -1\}$, we show that  
the increments $\cT_i-\cT_{i-1}$ are small for all $i\le L$. The argument to control the increment $\cT_i - \cT_{i-1}$ differs for different ranges of $i$ so that it is practical for us to introduce the following intermediate thresholds
$$I:= \max\{i\ge 1: \frac32 - i\eta > 1\}\;,\qquad K := \max\{i\ge 1: \frac32 - i\eta > 1/4\}\;.$$
We now introduce the following events which allow us to split our proof in four parts
\begin{equation}
 \begin{split}
  \cB_{(1)}^N &:= \{\cT_2=t_{\gd/2} \}, \\  \cB_{(2)}^N &:=\{\forall i \in \lint 3,I \rint, \cT_i-\cT_{i-1}\le 2^{-i}N^2\},\\
  \cB_{(3)}^N &:=\{\forall i \in \lint I+1,K \rint, \cT_i-\cT_{i-1}\le 2^{-i}N^2\},\\
    \cB_{(4)}^N &:=\{\forall i \in \lint K+1,L \rint, \cT_i-\cT_{i-1}\le 2^{-i}N^2\}.
 \end{split}
\end{equation}
We prove in the forthcoming Sections \ref{sec:A1}, \ref{sec:A2}, \ref{sec:A3}, and \ref{sec:A4} respectively that each of these four events holds with probability tending to 1. This implies in particular that
$$\lim_{N\to \infty} \bbP\left[\cT_L \le t_{\delta/2}+N^2\right]=1\;.$$
In Section \ref{sec:conclude} we use this last statement to conclude the proof of Proposition \ref{Prop:Coalesce}.
The remaining Sections \ref{sec:tech1}-\ref{sec:tech2} are dedicated to the introduction of technical material which is used throughout Sections \ref{sec:A2}-\ref{sec:conclude}.

\subsection{Initial contraction of $A_t$ and control of $\cT_2$}\label{sec:A1}

The probability of $\cB_{(1)}^N$ can be controlled  using Markov's inequality for the non-negative random variable $A_t$. Indeed $\E[A_t]$ has an explicit expression in terms of the discrete heat equation.  This argument does not exploit our specific coupling $\P$.

\begin{lemma}\label{Lemma:Contract}
For any $\eta >0$ we have as $N\to\infty$
$$ \lim_{N\to \infty} \P(A_{t_{5\eta}} > N^{\frac32 - 2\eta}) =0.$$
In particular, if $\eta\le \delta/10$ then $\lim_{N\to \infty}\P( \cB_{(1)}^{N})=1$.  
\end{lemma}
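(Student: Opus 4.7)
The plan is to bound $\bbE[A_t]$ explicitly via the linear discrete heat equation and then invoke Markov's inequality; no use of the specific coupling $\bbP$ or of the angle bracket is needed at this stage. The key observation is that by \eqref{genlinear}, the coordinate maps $h_k(x)=x_k$ satisfy $\cL_{N,\alpha}h_k=\tfrac12\Delta h_k$, so $u_k(t):=\bbE[X_k^{\wedge}(t)-X_k^x(t)]$ solves
$$\partial_t u_k(t)=\tfrac12\bigl(u_{k-1}(t)-2u_k(t)+u_{k+1}(t)\bigr),$$
with Dirichlet conditions $u_0(t)=u_N(t)=0$ and initial data $0\le u_k(0)=N-x_k\le N$.

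I would then diagonalize in the orthonormal basis $\phi_j(k):=\sqrt{2/N}\sin(jk\pi/N)$, $j\in\lint 1,N-1\rint$, associated with the eigenvalues $-\lambda_N^{(j)}=-(1-\cos(j\pi/N))$ already identified in Section \ref{Sec:Eigen}. Writing $u_k(0)=\sum_{j}a_j\sin(jk\pi/N)$, the pointwise bound $|u_k(0)|\le N$ gives $|a_j|\le CN$, hence
$$\bbE[A_t]=\sum_{k=1}^{N-1}u_k(t)=\sum_{j=1}^{N-1}a_j\,e^{-\lambda_N^{(j)}t}\sum_{k=1}^{N-1}\sin\bigl(\tfrac{jk\pi}{N}\bigr).$$
The elementary bound $\bigl|\sum_{k=1}^{N-1}\sin(jk\pi/N)\bigr|\le C N/j$ then yields
$$\bbE[A_t]\;\le\; CN^2\sum_{j=1}^{N-1}\frac1j\,e^{-\lambda_N^{(j)}t}.$$

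The ratio $\lambda_N^{(j)}/\gap_N=\sin^2(j\pi/(2N))/\sin^2(\pi/(2N))$ is bounded below by $cj^2$ uniformly in $N$ and $j\in\lint 1,N-1\rint$, so for $t=t_{5\eta}=(1+5\eta)\tfrac{\log N}{2\gap_N}$ the $j=1$ term dominates the sum, giving $\bbE[A_{t_{5\eta}}]\le CN^2\,e^{-\gap_N t_{5\eta}}=CN^{3/2-5\eta/2}$. Markov's inequality then produces
$$\bbP\bigl(A_{t_{5\eta}}>N^{3/2-2\eta}\bigr)\le CN^{-\eta/2}\longrightarrow 0,$$
which is the first assertion.

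For the second assertion, note that $\eta\le\delta/10$ forces $t_{5\eta}\le t_{\delta/2}$. Since $A_t$ is a nonnegative supermartingale (as recalled just before Proposition \ref{Prop:BoundBracket}), $\bbE[A_{t_{\delta/2}}]\le\bbE[A_{t_{5\eta}}]$ and the same Markov estimate gives $A_{t_{\delta/2}}\le N^{3/2-2\eta}$ with probability tending to one, i.e.\ $\cT_2=t_{\delta/2}$ on this event. The only technical ingredient is the trigonometric bound $\lambda_N^{(j)}\ge cj^2\gap_N$, which I expect to cause no real difficulty; this lemma is genuinely the easiest of the four hitting-time estimates precisely because the initial contraction is driven purely by the linear action of the generator on coordinate functions.
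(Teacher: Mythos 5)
Your proof is correct and takes essentially the same route as the paper: both bound $\E[A_t]$ through the discrete heat equation solved by $a(t,k)=\E[X^\wedge_k(t)-X^x_k(t)]$ with Dirichlet boundary data, deduce $\E[A_{t_{5\eta}}]\le C N^{3/2-5\eta/2}$, and conclude with Markov's inequality (and, for the second assertion, the monotonicity of the bound up to $t_{\delta/2}$). The only, immaterial, difference is in how the intermediate estimate $\E[A_t]\le CN^2 e^{-\gap_N t}$ is obtained: the paper bounds all eigenvalues by $\gap_N$ in an $L^2$ contraction estimate and then uses Cauchy--Schwarz, while you estimate the Fourier sum mode by mode via $|a_j|\le CN$, $\bigl|\sum_k \sin(jk\pi/N)\bigr|\le N/j$ and $\lambda_N^{(j)}\ge (4/\pi^2) j^2\gap_N$, which indeed makes the $j=1$ term dominant.
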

\noindent
\begin{proof}
Let us set $a(t,k) := \E[X^\wedge_k(t) - X_k^x(t)]$. From the expression \eqref{genlinear}, we deduce that
$$ \partial_t  a(t,k)= \frac12 \Delta a(t,k)\;,\quad k\in\lint 1,N-1\rint\;.$$
In addition we have $a(t,0) = a(t,N) = 0$. Expanding $k\mapsto a(t,k)$ on the orthonormal basis of the discrete Laplacian as in \eqref{Eq:XiFourier} and estimating all the corresponding eigenvalues by the main eigenvalue, it follows that
$$ \sum_{k=0}^N a(t,k)^2 \le e^{-2\gap_N t} \sum_{k=0}^N a(0,k)^2\;.$$
By Cauchy-Schwarz's inequality,
\begin{equation}\label{usingCS}
\E[A_t]^2 \le N \sum_{k=0}^N a(t,k)^2 \le N e^{-2\gap_N t} \sum_{k=0}^N a(0,k)^2 \le N^4 e^{-2\gap_N t}\;.
\end{equation}
Markov's inequality then yields the asserted result. 
\end{proof}

\subsection{Technical preliminaries to control the probability of $\cB_{(2)}^N$, $\cB_{(3)}^N$,  $\cB_{(4)}^N$}\label{sec:tech1}

Before going into the specifics of each case let us introduce the common framework which allows us to control $\cT_i-\cT_{i-1}$ for $i\in \lint 3,L\rint$.
Our main idea is to exploit the fact that $(A_t)_{t\ge 0}$ is a supermartingale for which we have a reasonable control on the jumps.
For such processes the hitting time can be estimated if one can control the angle bracket of the martingale, as shown in the following result from~\cite{LabLacWASEP}.

\begin{proposition}\label{prop:solskjaer}
Let $(M_t)_{t\ge 0}$ be a pure-jump supermartingale with bounded jump rate and jump amplitude.
Given $a \in\bbR$ and $b\le a$, set $$\tau_{b}:=\inf\{ t\ge 0 \ : \ M_t\le b\}.$$
If the amplitude of the jumps of $(M_t)_{t\ge 0}$ is bounded above by $d$, then
we have for any $v\ge 4d^2$
\begin{equation}
 \bbP[  \langle M_\cdot \rangle_{\tau_b}\ge v  \ | \ M_0\le a ]\le 8  (a-b)v^{-1/2},
\end{equation}
where  $\langle M_\cdot \rangle$ denotes the bracket of $(M_t)_{t\ge 0}$
(the predictable processes which compensates the square of the martingale part of $M$).
\end{proposition}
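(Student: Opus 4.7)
The plan is to reduce the statement to a pure-martingale hitting-time estimate. By the Doob--Meyer decomposition, write $M_t = M_0 + N_t - C_t$ with $N$ a local martingale starting at $0$ and $C$ predictable, nondecreasing, $C_0 = 0$. The bounded jump rate forces $C$ to be absolutely continuous (it is the compensator of a bounded-rate jump process), so $\langle M\rangle = \langle N\rangle$ throughout and the jumps of $N$ inherit the bound $|\Delta N_t| \le d$. Since $C_t\ge 0$, on $\{t<\tau_b\}$ one has $b < M_t = M_0 + N_t - C_t \le a + N_t$, whence $N_t > -(a-b)$. Writing $c:=a-b\ge 0$ and $\tilde\tau_c := \inf\{t\ge 0: N_t\le -c\}$, this gives $\tau_b \le \tilde\tau_c$, and therefore $\langle M\rangle_{\tau_b} \le \langle N\rangle_{\tilde\tau_c}$. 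The problem reduces to showing that for a martingale $N$ starting at $0$ with jumps bounded by $d$,
\[ \bbP\big[\langle N\rangle_{\tilde\tau_c} \ge v\big] \le 8 c / \sqrt{v} \quad\text{whenever } v \ge 4d^2.\]

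For this reduced statement I would introduce $T_v := \inf\{t : \langle N\rangle_t \ge v\}$ and the combined stopping time $\sigma := T_v \wedge \tilde\tau_c$, so that the target probability equals $\bbP[\sigma = T_v]$. Because $\langle N\rangle$ is absolutely continuous, $\langle N\rangle_\sigma \le v$ a.s., and optional stopping applied to the martingale $N_t^2 - \langle N\rangle_t$ yields $\bbE[N_\sigma^2] = \bbE[\langle N\rangle_\sigma] \le v$. Moreover, Doob's $L^2$ maximal inequality applied to the nonnegative submartingale $(-N_{\cdot \wedge \sigma})^+$ gives $\bbE\big[\sup_{s\le \sigma}(-N_s)^2\big] \le 4v$, so the running infimum of $N$ has typical size $\sqrt{v}$. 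The idea is that, on $\{\sigma = T_v\}$, the martingale $N$ accumulates bracket $v$ while remaining above $-c$, an event analogous to a Brownian motion of variance $v$ avoiding the level $-c$, whose probability is $2\Phi(c/\sqrt{v})-1 \lesssim c/\sqrt{v}$.

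The main obstacle is precisely this last step, the conversion of the $L^2$ information on $N_\sigma$ into the desired one-sided hitting probability bound. A naive application of Cauchy--Schwarz to $\bbE[N_\sigma \ind_{\sigma=T_v}]$ gives only the reverse estimate $\bbP[\sigma = T_v] \ge c^2/(v+c^2)$, because the martingale property forces any negative excursion to be balanced by a compensating positive one; this goes the wrong way. To get an upper bound one needs a reflection-type comparison adapted to jump martingales. The cleanest route is a Skorokhod-style embedding: the stopped martingale $N_{\cdot\wedge\sigma}$ can be embedded in a Brownian motion via a sequence of hitting times with predictable increments, and the survival estimate $\bbP[\min_{s\le v} B_s > -c] \le 2c/\sqrt{2\pi v}$ transfers to $N$ with a correction absorbed by the condition $v \ge 4 d^2$. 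The explicit constant $8$ then arises from combining the factor $4$ in Doob's $L^2$ inequality with the factor $2$ from the reflection principle, together with the jump correction; this is the content of the technical lemma borrowed from \cite{LabLacWASEP}.
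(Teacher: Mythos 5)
Your reduction is fine: conditioning on the $\cF_0$-event $\{M_0\le a\}$, writing $M=M_0+N-C$ with $N$ a martingale and $C$ predictable nondecreasing (so that $\Delta N=\Delta M$ because the compensator of a bounded-rate jump process is continuous, and $\langle M\rangle=\langle N\rangle$ by definition), and observing that $N_t>-(a-b)$ for $t<\tau_b$, hence $\langle M\rangle_{\tau_b}\le\langle N\rangle_{\tilde\tau_c}$ with $c=a-b$, is correct and is indeed the natural way to pass from the supermartingale to its martingale part. The problem is that everything after that point — the statement that a martingale started at $0$ with jumps bounded by $d$ stays above $-c$ while its predictable bracket reaches $v$ with probability at most $8c\,v^{-1/2}$ whenever $v\ge 4d^2$ — is the entire quantitative content of the proposition, and you do not prove it. The Skorokhod-embedding/reflection sketch skips precisely the places where the hypotheses enter: an embedding identifies Brownian time with the quadratic variation $[N]$, not with the predictable bracket $\langle N\rangle$, so an additional argument is needed to compare the two at the stopping time; the overshoot/jump corrections of size $d$ (this is where $v\ge 4d^2$ must be used) are simply declared to be ``absorbed''; and the accounting ``constant $4$ from Doob times constant $2$ from reflection gives $8$'' is not a derivation — in fact the Doob $L^2$ bound you establish is never used in the argument you outline. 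As a self-contained proof the key estimate is therefore a genuine gap.

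For comparison, the paper does not reprove this estimate either: its proof consists of invoking Proposition 29 of \cite{LabLacWASEP} together with the remark that the only difference is that $d$ need not be smaller than $a-b$, and that an inspection of that proof (which rests on Lemma 30 there) covers the present setting. So your final fallback to ``the technical lemma borrowed from \cite{LabLacWASEP}'' lands you, in effect, on the same citation the authors make; but if you intend the proposal to stand on its own, you must actually carry out the hitting-probability estimate for jump martingales (for instance via an exponential supermartingale or the dyadic/optional-stopping scheme of the cited lemma), including the role of $v\ge 4d^2$ and the explicit constant, rather than gesturing at a reflection principle. You also never address the one point the paper does discuss, namely why the restriction $d\le a-b$ present in the cited statement can be dropped.
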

\begin{proof}
The only difference with the statement of~\cite[Proposition 29]{LabLacWASEP} is that $d$ is not necessarily below $a-b$. However, a careful inspection of the proof therein shows that the present statement holds (note that the proof therein relies on~\cite[Lemma 30]{LabLacWASEP} and this result does not need any modification to cover our setting).
\end{proof}

We apply the previous proposition to the supermartingale $M_t=A_t$. 
Our idea is to combine the above result with 
estimates on the increments of the bracket of $A$, $\Delta_i \langle A\rangle := \langle A_\cdot\rangle_{\cT_i}-\langle A_\cdot\rangle_{\cT_{i-1}}$ 
in order to obtain upper-tail probability for the increments of the hitting times $\cT_i-\cT_{i-1}$.
The control of $\Delta_i \langle A\rangle$ as a function of $\cT_{i}-
\cT_{i-1}$ is technically involved.
Our general strategy is to restrict ourselves to an event of large probability for which we have (for some adequate constant $C(i,N)$)  
$$ \Delta_i \langle A\rangle \ge C(i,N) (\cT_{i}-
\cT_{i-1}).$$
The specific events that we have to consider is introduced in the following section.

\subsection{Restriction to the right set of events}\label{sec:tech2}

Our convenient event is the intersection of two events $\cA^{(N)}_1$ and $\cA^{(N)}_2$ that we now introduce (for these events and others we make the dependence in $N$ appear only when necessary). Regarding $\cA_1$, we only impose that the increments of the higher interface are not too large ``at all times'':
\begin{equation}
 \cA_1:=\big\{ \forall t\in [t_{\delta/2},t_{\delta/2}+N^2]: \,  \max_{k\in \lint 1,N-1 \rint}\eta_k^{\wedge}(t)\le 10 \log N \big\}. 
\end{equation}
That the probability of $\cA_1$ goes to $1$ will follow from the fact that the higher interface is close to equilibrium by Proposition \ref{Prop:AbsCont} and from simple estimates under the invariant measure. To define $\cA_2$, we introduce the following events that impose some restrictions on the interfaces at a given time $t$. The events $\cC_1$ and $\cC_2$ require respectively the gradients of the higher interface to be not too small, and the distance between the two interfaces to be not too large:
\begin{equation}\begin{split}
\cC_1(t)&:=\big\{ \min_{k\in \lint 1,N-1 \rint} \nabla X^{\wedge}_k(t) \ge  (N \log N)^{-1/2} \big\},\\
\cC_2(t)&:=\big\{ \max_{k\in \lint 1,N-1 \rint} |X^{\wedge}_k-X^{x}_k| \le  \sqrt{N} \log N \big\},
                \end{split}
\end{equation}
Given $x \in \Omega_N$, we let $(a_{i}(x))_{i=1}^{N-1}$ be
the increasingly ordered sequence of the values $(\nabla x_k)_{k=1}^{N-1}$. Then we set
\begin{equation}
 \cC_3(t):=\left\{ \forall i\in \lint 1,  N \rint :\quad
 (a_{i}(X^\wedge(t)))^2\ge \frac{i}{N\log N} \right\}\;.
\end{equation}
Finally we define
\begin{equation}
 \cC_4(t):=\left\{ \forall i\in \lint 0,N- (\log N)^2\rint \ :\quad
 \sum_{k=i+1}^{i+(\log N)^2} (\nabla X^{\wedge}_k(t))^2\ge \frac{(\log N)^2}{100} \right\}\;.
\end{equation}
The event $\cA_2$ then requires that for a large proportion of the interval of time $[t_{\delta/2},t_{\delta/2}+N^2]$, the four events $\cC_i(t)$ are satisfied:
\begin{equation}
 \cA_2:=\left\{ \int_{t_{\delta/2}}^{t_{\delta/2}+N^2}  \ind_{\cC_1(s)\cap\cC_2(s)\cap\cC_3(s)\cap \cC_4(s)}   \dd s \ge (1-2^{-(L+1)}) N^{2} \right\}. 
\end{equation}
Note that the probability of $\cA_2$ still goes to $1$ if $1-2^{-(L+1)}$ is replaced by any factor $1-c \in [0,1)$. However, for latter use we need this factor to be larger than $1-2^{-L}$, and this explains our particular choice $1-2^{-(L+1)}$.

\begin{proposition}\label{Prop:A1A2}
We have $\lim_{N\to \infty} \bbP[\cA^{(N)}_1\cap \cA^{(N)}_2]=1$.
\end{proposition}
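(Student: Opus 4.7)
The plan is to reduce both assertions to equilibrium estimates via Proposition~\ref{Prop:AbsCont}, and then handle the four conditions defining $\cA_2$ by standard concentration arguments under $\pi_{N,\alpha}$. I would first note that $\cA_1$ depends only on $\bX^\wedge$, and that the only condition in $\cA_2$ involving $\bX^x$ is $\cC_2$, which can be replaced by a condition on $\bX^\wedge$ and $\bX^\vee$ via the monotone bound
$$|X^\wedge_k(t) - X^x_k(t)| \le (X^\wedge_k(t) - k) + (k - X^\vee_k(t))$$
obtained from the coordinate order $\bX^\vee \le \bX^x \le \bX^\wedge$. By Proposition~\ref{Prop:AbsCont} and the symmetry $x \mapsto N - x$ (which maps $\wedge$ to $\vee$ and preserves $\pi_{N,\alpha}$), the laws of $\bX^\wedge(t_{\delta/2})$ and $\bX^\vee(t_{\delta/2})$ are both within TV distance $o(1)$ of $\pi_{N,\alpha}$, so at the cost of an $o(1)$ triangle-inequality error I may assume both interfaces are started at stationarity at time~$t_{\delta/2}$.

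For $\cA_1$, my argument would exploit that gradients are piecewise constant between updates, and that at each update of coordinate $k$ the new values $\eta_k^{\rm new}$ and $\eta_{k+1}^{\rm new}$ are distributed as $U$ and $1-U$ times the interval length $\eta_k^{\rm old} + \eta_{k+1}^{\rm old}$, with $U$ a symmetric Beta variable. Under stationarity, each such new gradient has the marginal law of $\eta_k$ under $\pi_{N,\alpha}$, whose tail is controlled by Lemma~\ref{Lemma:AbsCont}. Campbell's formula for the Poisson update process then bounds the expected number of jumps in $[t_{\delta/2},t_{\delta/2}+N^2]$ producing a new gradient exceeding $10 \log N$ by $CN^3\, \bbP_\pi(\eta > 10 \log N)$, which is polynomially small for $\alpha \ge 1$. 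Combined with $\bbP_\pi(\max_k \eta_k(t_{\delta/2}) > 10\log N) \le CN^{1-10\alpha}$, Markov's inequality will yield $\bbP(\cA_1^c) \to 0$.

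For $\cA_2$, I would apply Markov's inequality to the integrated indicator of $E(s)^c := \bigcup_i \cC_i(s)^c$ to get
$$\bbP(\cA_2^c) \le 2^{L+1} N^{-2} \int_{t_{\delta/2}}^{t_{\delta/2}+N^2} \bbP_\pi(E(s)^c)\, ds = 2^{L+1}\, \bbP_\pi(E(t_{\delta/2})^c)$$
by stationarity; since $L$ depends only on $\eta$ and is thus constant, it suffices to show $\bbP_\pi(\cC_i^c) \to 0$ for each $i$. Each such estimate is an equilibrium computation using Lemma~\ref{Lemma:AbsCont} to reduce to i.i.d.\ $\Gamma(\alpha,\alpha)$ gradients: $\cC_1^c$ via a union bound and $\bbP(\Gamma(2\alpha,\alpha) < \gep) \asymp \gep^{2\alpha}$; $\cC_2^c$ via Doob's inequality applied to the random-walk bridge $X_k - k = \sum_{i \le k}(\eta_i - 1)$, used separately for $\bX^\wedge$ and $\bX^\vee$; $\cC_3^c$ via a Chernoff bound on the binomial count of small gradients, summed over $i \in \lint 1, N \rint$; and $\cC_4^c$ via a large-deviation estimate for sums of $(\log N)^2$ positive i.i.d.\ variables, summed over the $N$ possible starting indices.

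The main obstacle will be the uniform-in-time estimate underlying $\cA_1$: over an interval of length $N^2$ there are typically $\Theta(N^3)$ updates, so a naive union bound using only pointwise equilibrium tails would fail. The saving point is that, under stationarity, every newly sampled gradient has the equilibrium marginal law, so Campbell's formula produces the right bound without enumerating individual jumps. A secondary subtlety is the reduction to stationarity for events depending on both $\bX^\wedge$ and $\bX^x$; I bypass this by rewriting every such event in terms of $\bX^\wedge$ and $\bX^\vee$ via monotonicity and applying Proposition~\ref{Prop:AbsCont} to each extremal process independently.
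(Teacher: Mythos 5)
Your proposal is correct and follows essentially the same architecture as the paper's proof: reduction to the stationary process via Proposition \ref{Prop:AbsCont} (with the symmetry argument giving the same control for $\bX^\vee$, and the monotone sandwich $X^\vee_k\le X^x_k\le X^\wedge_k$ to eliminate $\bX^x$ from $\cC_2$), Markov's inequality on the time integral defining $\cA_2$, and equilibrium tail estimates via Lemma \ref{Lemma:AbsCont} for $\cC_1,\dots,\cC_4$. The one genuinely different ingredient is your treatment of the uniform-in-time requirement in $\cA_1$: the paper discretizes $[t_{\delta/2},t_{\delta/2}+N^2]$ into subintervals of length $N^{-5}$, notes that with probability $1-O(N^{-1})$ no subinterval contains two updates, and union-bounds the roughly $N^7$ grid times against the equilibrium bound $N^{-8}$; you instead observe that gradients change only at update times and that, at stationarity, each newly created gradient has the equilibrium marginal law, so a first-moment (Campbell/Mecke) bound over the $\Theta(N^3)$ expected updates suffices. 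Your route is a bit more direct and avoids the grid, though your remark that a ``naive union bound over updates would fail'' is not quite right: your Campbell bound is precisely such a union bound in expectation, and it works because the per-update tail is of order $N^{-10\alpha+o(1)}$. Two details to tighten in a full write-up: for $\cC_3$ (and $\cC_4$) the gradients $\nabla x_k=\eta_k+\eta_{k+1}$ overlap, so, as in the paper, you should split into even and odd indices before invoking independence under $\nu_N$ via Lemma \ref{Lemma:AbsCont}; and your identity ``$=2^{L+1}\bbP(E(t_{\delta/2})^{\cc})$ by stationarity'' should be an inequality up to the $o(1)$ coupling error, since the true processes are only approximately stationary after time $t_{\delta/2}$.
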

\begin{proof}
Recall that $\pi_{N,\alpha}$ is the invariant measure of our dynamics, and let $\nu_N$ be the law on $\Omega_N^+$ under which the $\eta_k$'s are independent $\gG(\alpha,\alpha)$ r.v. Without further mention, we will apply Lemma \ref{Lemma:AbsCont} that allows us to bound some functionals under $\pi_{N,\alpha}$ by the same functionals under $\nu_N$.

We start with the event $\cA_1$. We have (recall that $\alpha \ge 1$)
\begin{equation}\label{Eq:BoundGrad}
\pi_{N,\alpha}(\exists k: \eta_k \ge 10 \log N) \le C \sum_{k=1}^N \nu_N(\eta_k \ge 10 \log N) \le N^{-8}\;.
\end{equation}
By Proposition \ref{Prop:AbsCont}, it follows that 
$$\lim_{N\to \infty} \sup_{t\ge t_{N,\delta/2}} 
\| P_{t}^\wedge - \pi_{N,\alpha} \|_{TV} = 0\;.$$
Therefore, it suffices to work with the process starting from the stationary measure $\pi_{N,\alpha}$: we denote by $\bP$ the law of such a process. Let us subdivide the interval $[t_{\delta/2},t_{\delta/2}+N^2]$ into disjoint intervals $[t_i,t_{i+1}]$ of size $N^{-5}$: note that there are of order $N^7$ such intervals. Then, a standard argument on independent Poisson clocks ensures that the probability that on each interval $[t_i,t_{i+1}]$ there is no more than $1$ resampling event is larger than $1-CN^{-1}$ for some constant $C>0$, hence on that event, at any time $s\in [t_{\delta/2},t_{\delta/2}+N^2]$, $\eta_k(s)$ is equal to some $\eta_k(t_i)$ for some $i$. Moreover, a simple union bound combined with \eqref{Eq:BoundGrad} shows that the probability that for all $t_i$ and all $k$, $\eta_k(t_i) < 10 \log N$ goes to $1$. Therefore,
$$ \lim_{N\to \infty }\bP(\exists t\in [t_{\delta/2},t_{\delta/2}+N^2], \exists k\in \lint 1,N\rint: \eta_k(t) \ge 10 \log N) = 0\;.$$

We turn to the event $\cA_2$. By Markov's inequality, it suffices to show that for every $i\in\lint 1,4\rint$ 

$$ \lim_{N\to \infty}\sup_{t\ge t_{\delta,N}}\bbP(\cC^{(N)}_i(t)^\cc)=0.$$
To handle the events $\cC_1$, $\cC_3$ and $\cC_4$, Proposition \ref{Prop:AbsCont} ensures that one can work under the stationary measure $\pi_{N,\alpha}$.
We are going to make extensive use of the following tail estimates for the $\gG(2\alpha,\alpha)$ distribution:
\begin{equation}\label{tail}
 \nu_N( \nabla x_k \le t )=\nu_N(\eta_1+\eta_2\le t) \le C' t^{2\alpha} \le C t^2, \quad \forall t\in [0,1]\;.
\end{equation}
By Lemma \ref{Lemma:AbsCont}, the bound is also valid under $\pi_{N,\alpha}$ (with a different constant). To control the probability of $\cC^\cc_1$ it is sufficient to observe that by union bound
and exchangeability of the increments
\begin{equation}
\pi_{N,\alpha}(\cC^{\cc}_1)\le N\pi_{N,\alpha}\Big(\nabla x_1 < \frac1{\sqrt{N \log N}}\Big) \le C (\log N)^{-1}\;.
\end{equation}

We turn to $\cC_4$. By union bound and exchangeability again
we have 
\begin{align*}
 \pi_{N,\alpha}(\cC^{\cc}_4) &\le N \pi_{N,\alpha} \left( \sum_{k=1}^{(\log N)^2} (\nabla x_k)^2\le \frac{(\log N)^2}{100} \right)\\ 
 &\le C N \nu_N\left( \sum_{k=1}^{\frac12 (\log N)^2} (\nabla x_{2k})^2\le \frac{(\log N)^2}{100} \right)\\
 &\le CN \Big(\nu_N\big[e^{-\gl (\nabla x_2)^2}\big] \, e^{\frac{\gl}{50}}\Big)^{\frac{(\log N)^2}{2}}\;,
\end{align*}
for all $\gl \ge 0$. Since $\nu_N[(\nabla x_{2k})^2] = (4+2\alpha^{-1}) > 1$, it is simple to check that for small enough $\gl$, we have $\E[e^{-\gl (\nabla x_2)^2}] \le e^{-\gl}$ which suffices to conclude.

%

We now consider $\cC_3$. We let $b_i(x)$ denote the set of increasingly ordered values of $(\nabla x_{2k})_{k=1}^{N/2}$. We are going to show that $\pi_{N,\alpha}(\cC'^{\cc}_3)$ tends to zero where
$$\cC'_3:=\left\{\forall   i\le N/4, \ b_i(x)^2 \ge \frac{4i}{N\log N}\right\}.$$
The same bound concerning the odd gradients $(\nabla x_{2k-1})_{k=1}^{N/2}$ is then sufficient to conclude. 
By union bound and exchangeability of the variables $(\nabla x_{2k})_{k=1}^{N/2}$, we have
\begin{equation*}
\pi_{N,\alpha}((\cC'_3)^{\cc})\le \sum_{i=1}^{N/4} \binom{N}{i} \pi_{N,\alpha}
\left( \forall j\in \lint 1,i \rint,\   (\nabla x_{2j})^2\le \frac{4i}{N\log N} \right).
\end{equation*}
Using Lemma \ref{Lemma:AbsCont} and \eqref{tail}
we obtain for some positive constant $C$
\begin{equation}
\pi_{N,\alpha}((\cC'_3)^{\cc})\le  \sum_{i=1}^{N/4} \frac{N^i}{i!} \left(\frac{Ci}{N\log N}\right)^i\le \frac{C'}{\log N}.
\end{equation}


Regarding $\cC_2$, we first show that for $N$ sufficiently large
\begin{equation}\label{Eq:InvGamma}
\pi_{N,\alpha}\left(\max_{k\in \lint 1,N-1 \rint} |x_k - k| > \sqrt N \log N /2 \right) \le N^{-1}\;.
\end{equation}
By symmetry, we can restrict to $k\in \lint 0,N/2\rint$ and by Lemma \ref{Lemma:AbsCont}, it suffices to prove this bound under $\nu_N$. For every $k\in \lint 0,N/2\rint$ the law of $x_k$ under $\nu_N$ is $\gG(k\alpha,\alpha)$. Using a Chernoff bound for all $N$ large enough we obtain
$$ \nu_N\left(|x_k - k| > \sqrt N \log N/2\right) \le e^{-10\log N}\;.$$
(Note that a finer upper bound would be $e^{-c (\log N)^2}$ for some small enough $c>0$).
Consequently, \eqref{Eq:InvGamma} follows. By Proposition \ref{Prop:AbsCont}, we deduce that 
\begin{equation}\label{wedgefluc}
\lim_{N\to \infty}\sup_{t\ge t_{\delta/2}}\bbP\left(\max_{k\in \lint 1,N-1 \rint} |X_k^\wedge(t) - k| > \sqrt N \log N/2\right)=0 \;.
\end{equation}
By symmetry the same is valid for $X_k^\vee(t)$.
By Proposition \ref{grandgradient}  $X_k^x(t)$ is stochastically dominated by $X_k^\wedge(t)$ and stochastically dominates $X_k^\vee(t)$.
This implies that 
\begin{equation}\label{xfluc}
\lim_{N\to \infty}\sup_{x\in \gO_N}\sup_{t\ge t_{\delta/2}}\bbP\left(\max_{k\in \lint 1,N-1 \rint} |X_k^x(t) - k| > \sqrt N \log N/2\right)=0 \;.
\end{equation}
and the result is obtained by observing that $\cC_2(t)^{\cc}$ is contained in the union of the two events in \eqref{wedgefluc} and \eqref{xfluc}.
\end{proof}

\subsection{Controlling the $\cT_i$ increments for $i\in \lint 3,I\rint$}\label{sec:A2}

We are now ready to prove 
\begin{lemma}\label{lem:B2}
 The event $\cB_{(2)}^N$ satisfies
 \begin{equation}
  \lim_{N\to \infty} \bbP[\cB_{(2)}^N]=1.
 \end{equation}

\end{lemma}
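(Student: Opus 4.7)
The plan is to obtain, for each fixed $i\in\lint 3,I\rint$, a probability--$(1-o(1))$ bound on $\cT_i-\cT_{i-1}\le 2^{-i}N^2$ by running the supermartingale hitting--time estimate of Proposition \ref{prop:solskjaer} on the interval $[\cT_{i-1},\cT_i]$ and combining it with a pointwise lower bound on $\partial_t\langle A_\cdot\rangle_t$ coming from Proposition \ref{Prop:BoundBracket}. A union bound over the $O(1/\eta)$ values of $i$, together with $\bbP[\cA_1\cap\cA_2]\to 1$ from Proposition \ref{Prop:A1A2}, will then give $\bbP[\cB_{(2)}^N]\to 1$.

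\emph{Step 1 (upper bound on the bracket increment).} Write $a_i:=N^{3/2-(i-1)\eta}$ and $b_i:=N^{3/2-i\eta}$. By definition $A_{\cT_{i-1}}\le a_i$. On $\cA_1$ every gradient of $\bX^\wedge$ is bounded by $20\log N$, hence every jump of the supermartingale $A_t$ has amplitude at most $d:=40\log N$. Applying Proposition \ref{prop:solskjaer} with $v_i:=(\log N)^{3}(a_i-b_i)^2$ gives
\[
\bbP\bigl[\Delta_i\langle A_\cdot\rangle := \langle A_\cdot\rangle_{\cT_i}-\langle A_\cdot\rangle_{\cT_{i-1}}\ge v_i\bigr]\;\le\; \frac{8(a_i-b_i)}{\sqrt{v_i}}\;=\;O\bigl((\log N)^{-3/2}\bigr),
\]
uniformly in $i\in\lint 3,I\rint$.

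\emph{Step 2 (pointwise lower bound on $\partial_t\langle A_\cdot\rangle_t$).} On the time--$t$ event $\cC_1(t)\cap\cC_2(t)\cap\cC_3(t)\cap\cC_4(t)$ (and on $\cA_1$), I claim $\partial_t\langle A_\cdot\rangle_t\ge G_i$ as long as $A_t\ge b_i$, with $G_i$ sufficiently large that $G_i\cdot 2^{-i-1}N^2>v_i$, i.e.\ $G_i$ of order $N^{1-2(i-1)\eta}(\log N)^{3}$ up to an $i$--dependent multiplicative constant. The mechanism is the one flagged in Proposition \ref{Prop:BoundBracket}: on $\cA_1$ the min in the right--hand side becomes $(\nabla X^\wedge_k)^2$ whenever $\delta\bar X_k\ge 20\log N$; using $\cC_2$ (so $\delta\bar X_k\le\sqrt N\log N$) together with the mass constraint $\sum_k\delta\bar X_k\approx A_t>N$ shows that the set $S$ of such $k$ has size $\gtrsim A_t/(\sqrt N\log N)$, and then the ordered--gradient lower bound of $\cC_3$ combined with the window estimate of $\cC_4$ transfers this into a lower bound on $\sum_{k\in S}(\nabla X^\wedge_k)^2$ of the required order. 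The fact that $i\le I$ (equivalently $A_t\ge N^{1+\beta}$ for some $\beta>0$) is what ensures $|S|$ is large enough for this to work.

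\emph{Step 3 (integration and comparison).} By $\cA_2$, the bad times in $[t_{\delta/2},t_{\delta/2}+N^2]$ have Lebesgue measure at most $2^{-(L+1)}N^2$. Hence if $\cT_i-\cT_{i-1}>2^{-i}N^2$, the good portion of $[\cT_{i-1},\cT_i]$ has measure at least $2^{-i}N^2-2^{-(L+1)}N^2\ge 2^{-i-1}N^2$ (using $i\le L$), and integrating the bound of Step 2 gives $\Delta_i\langle A_\cdot\rangle\ge G_i\cdot 2^{-i-1}N^2>v_i$, contradicting Step 1 with probability $1-o(1)$. Summing this failure probability over the $O(1/\eta)$ indices $i\in\lint 3,I\rint$ and adding $\bbP[(\cA_1\cap\cA_2)^\cc]\to 0$ finishes the proof.

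The main obstacle is Step 2, namely producing the quantitative lower bound $\partial_t\langle A_\cdot\rangle_t\ge G_i$. This is where the choice of constants in the events $\cC_1,\ldots,\cC_4$ pays off: one has to juggle the cheap lower bound $\nabla X^\wedge_k\ge(N\log N)^{-1/2}$, the more refined order--statistic bound of $\cC_3$, the window bound of $\cC_4$, the ceiling on $\delta X_k$ from $\cC_2$, and the mass constraint $\sum_k\delta\bar X_k\approx A_t>N$ in such a way that the resulting $G_i$ beats $v_i/(2^{-i-1}N^2)$ for every $i\in\lint 3,I\rint$. This matching is exactly what fixes the threshold $1$ separating the $\cB_{(2)}^N$ regime from the next one.
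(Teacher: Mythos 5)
Your overall architecture (restart the supermartingale at $\cT_{i-1}$, apply Proposition \ref{prop:solskjaer}, lower bound $\partial_t\langle A_\cdot\rangle_t$ on the good times provided by $\cA_2$, and derive a contradiction) is the same as the paper's, but two points do not hold as written, and the second is a genuine gap. First, the jump bound in Step 1 is false: on $\cA_1$ the jumps of $A_t$ are \emph{not} of order $\log N$. When site $k$ is updated, the area can change by as much as $X^\wedge_{k+1}(t_-)-X^x_{k-1}(t_-)\le \delta X_{k-1}(t_-)+\nabla X^\wedge_k(t_-)$, and the inter-interface term $\delta X_{k-1}(t_-)$ is not controlled by $\cA_1$ at all (it can be of order $\sqrt N\log N$ even on $\cC_2$, and up to order $A_{t_-}$ in general) --- this is exactly why the paper introduces the stopping times $\cR_i$ in the next regime. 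Your application of Proposition \ref{prop:solskjaer} survives only by accident: since $(i-1)\eta<1/2$ for $i\le I$, your $v_i\approx N^{3-2(i-1)\eta}(\log N)^3$ still satisfies $v_i\ge 4d^2$ with the correct crude bound $d\le N$ (the paper uses $d=N$ and $v=N^{3-2(i-1)\eta}\log N$), but the justification must be repaired.

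The serious problem is Step 2, which is precisely the content of the paper's Lemma \ref{Lemma:LowBd1} and is not proved; worse, the mechanism you sketch cannot reach your own target $G_i$. With $S=\{k:\delta\bar X_k\ge 20\log N\}$ and $|S|\gtrsim A_t/(\sqrt N\log N)$, the only way $\cC_3$ converts a cardinality bound into squared-gradient mass is the order-statistics estimate $\sum_{k\in S}(\nabla X^\wedge_k)^2\ge \sum_{j\le |S|} j/(N\log N)\approx |S|^2/(2N\log N)\approx A_t^2/(N^2(\log N)^3)$, which for $A_t\ge N^{3/2-i\eta}$ is of order $N^{1-2i\eta}(\log N)^{-3}$ --- short of $G_i\sim N^{1-2(i-1)\eta}(\log N)^3$ by a factor $N^{2\eta}(\log N)^6$, so the contradiction in Step 3 fails. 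Nor can you invoke the window estimate $\cC_4$ directly: a window of length $(\log N)^2$ only contributes if \emph{all} of its sites satisfy the condition making the min equal to $(\nabla X^\wedge_k)^2$, and at threshold $20\log N$ a single site of $S$ does not force its neighbours into $S$, since $\delta X$ can drop by $10\log N$ per site on $\cA_1$. The missing idea (the paper's Lemma \ref{Lemma:LowBd1}) is to raise the threshold to $20(\log N)^3$: on $\cA_1$ each site with $\delta X_\ell>20(\log N)^3$ then forces a run of $(\log N)^2$ consecutive sites with $\delta\bar X_k\gg \log N$, each such window carries mass $\ge(\log N)^2/100$ by $\cC_4$, and one gets $\partial_t\langle A_\cdot\rangle_t\gtrsim A_t/(\sqrt N\log N)\ge N^{1-i\eta}/\log N$, which beats $G_i$ by a factor $N^{(i-2)\eta}$ for $i\ge 3$; neither $\cC_1$ nor $\cC_3$ is needed in this regime ($\cC_3$ enters only for $i>I$). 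Finally, in Step 3 you should also argue via the \emph{first} index violating the bound, together with $\cB_{(1)}^N$, to ensure $[\cT_{i-1},\cT_{i-1}+2^{-i}N^2]\subset[t_{\delta/2},t_{\delta/2}+N^2]$, where $\cA_1$ and $\cA_2$ are defined.
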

Recall that for any $t\in [t_{\delta/2},\cT_I)$, the area satisfies $A_t > N^{1+\kappa}$ for some $\kappa > 0$. Our proof relies then on the following observation.

\begin{lemma}\label{Lemma:LowBd1}
For all $t\in [t_{\delta/2},t_{\delta/2}+N^2]$,
on the event $\cA_1\cap\cC_2(t)\cap\cC_4(t)\cap\{ A_t\ge N (\log N)^4 \}$ we have 
\begin{equation}
\partial \langle A_{\cdot} \rangle_t \ge \frac{A_t}{\sqrt{N} \log N}. 
\end{equation}
\end{lemma}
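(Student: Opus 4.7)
The plan is to apply Proposition \ref{Prop:BoundBracket}, abbreviating $\sigma_k = \nabla X^\wedge_k(t_-)$ and $d_k = \delta \bar X_k(t_-)$, so that it suffices to bound $\sum_{k=1}^{N-1} \sigma_k \min(d_k, \sigma_k)$ from below by a constant multiple of $A_t/(\sqrt N \log N)$.

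First I would combine the uniform bounds on the good event: $\sigma_k \le 20 \log N$ from $\cA_1$, and $d_k \le \sqrt N \log N$ from $\cC_2$ (since $d_k = (\delta X_{k-1} + \delta X_{k+1})/2$ and each $\delta X_j \le \sqrt N \log N$ on $\cC_2$). The elementary inequality $\min(a,b) \ge ab/(a+b)$ then yields, for $N$ large,
\begin{equation*}
\sigma_k \min(d_k, \sigma_k) \ge \frac{d_k \sigma_k^2}{d_k + \sigma_k} \ge \frac{d_k \sigma_k^2}{2 \sqrt N \log N},
\end{equation*}
reducing the problem to establishing $\sum_k d_k \sigma_k^2 \gtrsim A_t$.

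For this remaining estimate, I would unfold $d_k$ and reindex to obtain
\begin{equation*}
\sum_k d_k \sigma_k^2 = \frac12 \sum_k \delta X_k (\sigma_{k-1}^2 + \sigma_{k+1}^2) + R_N,
\end{equation*}
where the boundary remainder $R_N$ is controlled by $O(\sqrt N (\log N)^3)$ under $\cA_1 \cap \cC_2$ and is therefore negligible compared to $A_t \ge N (\log N)^4$. I would then invoke $\cC_4$ applied to the two shifted windows of length $w = (\log N)^2$ to obtain $\sum_{k \in B_j}(\sigma_{k-1}^2 + \sigma_{k+1}^2) \ge w/50$ on each disjoint window $B_j$, and argue window-by-window against $\sum_k \delta X_k = A_t$ to produce the target bound.

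The main obstacle is this last window-by-window comparison. Since $\delta X_k$ and $\sigma_k^2$ carry no a priori correlation, a brute Cauchy--Schwarz only yields an inequality in the wrong direction. The argument must exploit the structural constraint that $\delta X$ can only increase at positions $k$ where $\eta^\wedge_k > 0$, which are precisely the positions forcing a neighboring $\sigma$ to be nonzero, and combine this rigidity with the spreading of $\sigma^2$ provided by $\cC_4$ to rule out the adversarial configuration in which the mass of $\delta X$ is concentrated at positions where $\sigma$ vanishes.
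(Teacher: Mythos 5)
Your reduction steps are fine as far as they go (Proposition \ref{Prop:BoundBracket}, the bound $\nabla X^\wedge_k\le 20\log N$ from $\cA_1$, the bound $\delta X_k\le \sqrt N\log N$ from $\cC_2$, and the elementary $\min(a,b)\ge ab/(a+b)$), but the proof has a genuine gap exactly where the lemma lives: you reduce matters to $\sum_k \delta\bar X_k\,(\nabla X^\wedge_k)^2\gtrsim A_t$ and then, after the harmless reindexing, you stop at the ``window-by-window comparison'', explicitly acknowledging that you have not ruled out the adversarial configuration where the mass of $\delta X$ sits at sites where $\nabla X^\wedge$ vanishes. Saying that the argument ``must exploit'' some rigidity is not a proof, and the rigidity you point to (``$\delta X$ can only increase where $\eta^\wedge_k>0$'') is not the statement that is actually needed: what one needs is the quantitative, one-sided Lipschitz bound that on $\cA_1$, $\delta X_{k-1}\ge \delta X_k-10\log N$ (monotonicity of $X^x$ plus $\eta^\wedge_k\le 10\log N$), so that largeness of $\delta X$ at a site propagates over the whole window of length $(\log N)^2$ to its \emph{left} — note the asymmetry: moving right, $\delta X$ may drop arbitrarily fast, so the direction of the windows matters.

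The paper's proof completes precisely this step, and in a way that avoids your stronger intermediate claim altogether. Threshold at $20(\log N)^3$: since $A_t\ge N(\log N)^4$, the sites with $\delta X_k\le 20(\log N)^3$ carry at most $A_t/2$, so by $\cC_2$ the set $S$ of heavy sites has $\#S\ge A_t/(2\sqrt N\log N)$. By the Lipschitz bound above, for every $\ell\in S$ and every $k\in\lint \ell-(\log N)^2,\ell\rint$ one has $\delta X_k>(\log N)^3$, hence $\delta\bar X_k>(\log N)^3/3\gg 20\log N\ge \nabla X^\wedge_k$, so on these sites the minimum in Proposition \ref{Prop:BoundBracket} is simply $(\nabla X^\wedge_k)^2$ — the factor $\delta\bar X_k$ is discarded rather than retained as in your $\sum_k d_k\sigma_k^2$ target. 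The union of these left-windows decomposes into intervals of length at least $(\log N)^2$, so $\cC_4$ converts the sum of $(\nabla X^\wedge_k)^2$ over it into a constant times its cardinality, which is at least $\#S\gtrsim A_t/(\sqrt N\log N)$. Until your sketch supplies an argument of this kind (or an equivalent one proving $\sum_k \delta\bar X_k(\nabla X^\wedge_k)^2\gtrsim A_t$ on $\cA_1\cap\cC_2\cap\cC_4$), the proposal does not prove the lemma.
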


\begin{proof}[Proof of Lemma \ref{lem:B2}]
Proposition \ref{prop:solskjaer} applied to the supermartingale $ (A_{\cT_{i-1} +s})_{s\ge 0}$ (whose maximal jump size is $N$) with $v=N^{3-2(i-1)\eta} \log N$, yields that $\lim_{N\to \infty}\P(\cA^{(N)}_3) =1$ where 
$$ \cA_3:= \bigcap_{i=3}^{I} \big\{\Delta_i \langle A\rangle < N^{3-2(i-1)\eta} \log N\big\}\;.$$
To conclude we only need to show that   
$$\cB_{(2)}^N \supset \cA_1\cap \cA_2 \cap \cA_3\cap \cB_{(1)}^N.$$
We proceed by contradiction. On the event $\cA_1\cap \cA_2 \cap \cA_3$ consider the smallest integer $j$ in $\lint 3,I\rint$ such that $\cT_j-\cT_{j-1} > 2^{-j}N^2$. Applying Lemma \ref{Lemma:LowBd1} 
on the interval
$[\cT_{j-1},\cT_{j-1} +2^{-j}N^2]\subset [t_{\delta/2},t_{\delta/2}+N^2]$,
on which $A_t\ge N^{3/2-j\eta}$
we obtain 
\begin{multline}
\Delta_j \langle A\rangle \; \ge  \langle A\rangle_{\cT_{j-1}+2^{-j}N^2 }-\langle A\rangle_{\cT_{j-1} } \\
\ge  \frac{N^{3/2-j\eta}}{\sqrt{N} \log N} \int_{\cT_{j-1}}^{\cT_{j-1}+2^{-j}N^2} \ind_{\cC_2(s)\cap\cC_4(s)}\dd s \ge 
\frac{2^{-(j+1)}N^2 N^{3/2-j\eta}}{\sqrt{N} \log N},
\end{multline}
where the last inequality uses the definition of $\cA_2$ to assert that 
$$\int_{\cT_{j-1}}^{\cT_{j-1}+2^{-j}N^2} \ind_{\cC_4(s)\cap \cC_2(s)}\dd s\ge 
(2^{-j}-2^{-(L+1)})N^2.$$

\end{proof}

\begin{proof}[Proof of Lemma \ref{Lemma:LowBd1}]
Since $A_t \ge N (\log N)^4$, we have for $N$ large enough
$$ \sum_{k=1 }^{N-1} \delta X_k(t)\ind_{\{\delta X_k(t) \le 20(\log N)^3 \}} \le 20 N (\log N)^3 \le \frac12 A_t\;.$$
Since we work on $\cC_2(t)$, we get
\begin{align}\label{semis}
\frac12 A_t &\le \big(\max_{k\in \lint 1,N-1\rint} \delta X_k(t)\big)\; \#\{k: \delta X_k(t) > 20(\log N)^3\}\\
&\le \sqrt{N} \log N\;\#\{k: \delta X_k(t) > 20(\log N)^3\}\;.
\end{align}
The bound on the gradients given by $\cA_1$ ensures that 
$$\delta X_{k-1}(t) > \delta X_k(t) - 10\log N$$ so that for $N$ large enough we get
$$ \left\{ \delta X_k(t) > (\log N)^3 \right\} \Rightarrow \left\{\delta \bar{X}_k(t) > \frac{(\log N)^3}{3}  \right\}\;,$$
and
$$ \left\{\delta X_\ell(t) > 20(\log N)^3 \right\}\Rightarrow \Big\{\forall k\in \lint \ell- (\log N)^2,\ell \rint,\; \delta X_k(t) > (\log N)^3\Big\}\;.$$
Consequently by Proposition \ref{Prop:BoundBracket} on the event $\cA_1$
\begin{align*}
\partial_t \langle A_\cdot\rangle_t &\ge c \sum_{k=1 }^{N-1}  \big(\nabla X^\wedge_{k}(t)\big)^2\ind_{\{\delta X_k(t) > (\log N)^3\}}\\
&\ge c\sum_{k=1}^{N-1}  \big(\nabla X^\wedge_{k}(t)\big)^2 \ind _{\{ \exists \ell \in \lint k,k+(\log N)^2 \rint:\, \delta X_\ell(t) > 20(\log N)^3\}}\;.
\end{align*}
The set over which the sum is taken on the r.h.s.~can be decomposed into connected components of size at least $(\log N)^2$. On $\cC_4(t)$, we thus deduce that there exists $c'>0$ such that
\begin{align*}
\partial_t \langle A_\cdot\rangle_t &\ge c' \;\#\{k: \exists \ell \in \lint k,k+(\log N)^2 \rint, \delta X_\ell(t) > 20(\log N)^3\} \\
&\ge c' \;\#\{\ell: \delta X_\ell(t) > 20(\log N)^3\}\\
&\ge \frac{c'}{2} \frac{A_t}{\sqrt N \log N}\;,
\end{align*}
where in the last line we simply used \eqref{semis}.
\end{proof}

\subsection{Controlling the $\cT_i$ increments for $i\in \lint I+1,K\rint$}\label{sec:A3}

We can prove now
\begin{lemma}\label{lem:B3}
 We have 
 \begin{equation}
  \lim_{N\to \infty} \bbP[\cB_{(3)}^N]=1.
 \end{equation}
 \end{lemma}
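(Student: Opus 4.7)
The argument will parallel the proof of Lemma~\ref{lem:B2}: Proposition~\ref{prop:solskjaer} furnishes an upper bound on each bracket increment $\Delta_i \langle A \rangle$ with high probability, and this is contradicted by integrating a lower bound on $\partial_t \langle A \rangle_t$ over a long enough sub-interval of $[\cT_{i-1}, \cT_{i-1}+2^{-i}N^2]$. The only genuinely new ingredient is that lower bound, since Lemma~\ref{Lemma:LowBd1} is valid only when $A_t \ge N(\log N)^4$, a regime we have just left. Concretely, I would apply Proposition~\ref{prop:solskjaer} to each supermartingale $(A_{\cT_{i-1}+s})_{s\ge 0}$ with threshold $v_i := N^{3-2(i-1)\eta}(\log N)^{C}$ for a sufficiently large constant $C$, and take a union bound over the $O(1/\eta)$ indices in $\lint I+1,K \rint$ to obtain an event $\cA_3'$ of probability $1-o(1)$ on which $\Delta_i \langle A \rangle < v_i$ for every such $i$.

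The crux is to establish, in the regime $A_t \in [N^{1/4}, N]$ and on $\cA_1 \cap \cC_1(t) \cap \cC_2(t) \cap \cC_3(t) \cap \cC_4(t)$, a lower bound of the form $\partial_t \langle A \rangle_t \ge c\, A_t^{p}/(N^{q}(\log N)^{r})$ strong enough that, once integrated over a subset of $[\cT_{i-1}, \cT_{i-1}+2^{-i}N^2]$ of Lebesgue measure $\ge 2^{-(i+1)}N^2$ (guaranteed by $\cA_2$), the outcome exceeds $v_i$ by a polynomial factor in $N$. Starting from Proposition~\ref{Prop:BoundBracket}, I would split $\lint 1,N-1 \rint$ into $S_A := \{k : \delta \bar X_k \le \nabla X^\wedge_k\}$ and $S_B := \{k : \delta \bar X_k > \nabla X^\wedge_k\}$. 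On $S_A$, Cauchy--Schwarz together with $\delta \bar X_k/\nabla X^\wedge_k \le 1$ yields $\sum_{S_A}\delta \bar X_k \nabla X^\wedge_k \ge (\sum_{S_A}\delta \bar X_k)^2/N$; on $S_B$, the ordered-gradient estimate provided by $\cC_3$ controls $\sum_{S_B}(\nabla X^\wedge_k)^2$ from below in terms of $|S_B|$, which is in turn bounded below by $(\sum_{S_B}\delta \bar X_k)/(\sqrt N \log N)$ via $\cC_2$. Since $\sum_k \delta \bar X_k \ge A_t$, at least one of the two partial sums is $\ge A_t/2$, and a careful optimisation of the two resulting estimates (possibly also invoking $\cC_4$ to boost the Case~$B$ contribution when indices of $S_B$ are not allowed to cluster in a single window) is expected to deliver the target inequality.

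The conclusion will then mimic Lemma~\ref{lem:B2}: on $\cA_1 \cap \cA_2 \cap \cA_3' \cap \cB^N_{(2)}$, if $j \in \lint I+1, K \rint$ is the smallest index with $\cT_j - \cT_{j-1} > 2^{-j}N^2$, then $A_t \ge N^{3/2-j\eta} \ge N^{1/4}$ throughout $[\cT_{j-1}, \cT_{j-1}+2^{-j}N^2]$, and integrating the lower bound above over the good-events subset of that interval yields $\Delta_j \langle A \rangle > v_j$ for $N$ large enough, contradicting $\cA_3'$. The main obstacle is the middle paragraph: unlike in Lemma~\ref{Lemma:LowBd1}, where at least $c A_t/(\sqrt N \log N)$ indices are forced to carry $\delta X_k \gg 1$ (so that $\cC_4$ delivers a large bracket contribution per unit area), here $A_t$ can be so small that most $\delta \bar X_k$ are much smaller than $1$ and this counting is no longer available. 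The remedy is to use $\cC_3$ (and if necessary $\cC_4$) to handle the worst-case scenario in which the area concentrates precisely on the indices where the gradients of $\bX^\wedge$ are smallest; squeezing enough out of those events to secure a polynomial-in-$N$ safety margin over $v_i$ is the technical heart of the argument.
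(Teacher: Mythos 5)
There are two genuine gaps, and they are precisely the two points where this regime differs from Lemma~\ref{lem:B2}.

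First, you apply Proposition~\ref{prop:solskjaer} directly to $(A_{\cT_{i-1}+s})_{s\ge0}$ with $v_i=N^{3-2(i-1)\eta}(\log N)^C$, but that proposition requires $v\ge 4d^2$ where $d$ bounds the jump amplitude. The only a priori bound on a jump of the area is of order $A_{t_-}+\max_k\nabla X^\wedge_k$, hence of order $N$ in the worst case, so one would need $v\ge 4N^2$; for $i$ in the range $\lint I+1,K\rint$ your $v_i$ drops as low as $N^{1/2+2\eta}$ up to logarithms, so the hypothesis fails badly and the proposition simply does not apply. This is exactly why the paper introduces the stopping times $\cR_i$ (stopping when some gradient exceeds $10\log N$ or when $A_t$ climbs back above $N^{3/2-(i-2)\eta}$), applies Proposition~\ref{prop:solskjaer} to the stopped supermartingale whose jumps are at most $10\log N+N^{3/2-(i-2)\eta}$ with $v_i=4(10\log N+N^{3/2-(i-2)\eta})^2$, and then uses Ville's (supermartingale maximal) inequality to show that with probability $1-O(N^{-\eta})$ the stopping never interferes ($\cA_{4,1}$). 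Your plan contains no mechanism to control the jump amplitude, so the probabilistic half of the argument is not justified as written.

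Second, the deterministic half — the lower bound on $\partial_t\langle A\rangle_t$ — is left open, and the route you sketch does not close the contradiction. In your case $S_B$, bounding $|S_B|$ via $\cC_2$ only gives $|S_B|\ge \sum_{S_B}\delta\bar X_k/(\sqrt N\log N)$, and in the worst case where $S_B$ carries the area and consists of the indices with the smallest gradients, $\cC_3$ yields only $\sum_{S_B}(\nabla X^\wedge_k)^2\gtrsim A_t^2/(N^2(\log N)^3)$. Integrating this over a time of order $2^{-j}N^2$ produces roughly $N^{3-2j\eta}(\log N)^{-3}$, which is \emph{smaller} than any admissible $v_j$ (which must exceed $(a-b)^2\approx N^{3-2(j-1)\eta}$ for the probability bound of Proposition~\ref{prop:solskjaer} to be useful) by a factor $N^{2\eta}$ times logarithms — so no contradiction is reached. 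The paper's Lemma~\ref{Lemma:LowBd2} avoids this by dichotomizing on whether $\max_k\delta X_k$ exceeds $N^\gep$ with the much smaller threshold $\gep=1/4$: if it does, $\cA_1$ forces a block of length $N^{2\gep/3}$ of large $\delta X_k$ and $\cC_4$ gives $\partial_t\langle A\rangle_t\ge N^{\gep/2}$; if it does not, Lemma~\ref{Lemma:Baibi} with $B=N^\gep$ together with $\cC_3$ and Proposition~\ref{Prop:BoundBracket} gives $\partial_t\langle A\rangle_t\ge A_t^2/N^{1+3\gep}$, and this extra $N^{1-3\gep}$ gain over your estimate is what beats $v_j$ for $\eta$ small. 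You correctly identify this as the technical heart, but "a careful optimisation\dots is expected to deliver the target inequality" is not a proof, and the specific splitting you propose, with the $\cC_2$-based counting, quantitatively falls short.
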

We follow essentially the same line of proof as in the previous section, but using a different inequality for the bracket derivative. We also need an extra trick to control the maximal amplitude of jumps.
 Recall that for every $t\in [\cT_I,\cT_K)$, we have $A_t > 2N^{1/4}$.

\begin{lemma}\label{Lemma:LowBd2}
Fix $\gep > 0$. On the event $\cA_1\cap\cC_3(t)\cap \cC_4(t)\cap \{A_t > 2N^{\gep}\}$, we have for all $t\in [t_{\delta/2},t_{\delta/2}+N^2]$
$$ \partial_t \langle A_\cdot \rangle_t \ge N^{\frac{\gep}{2}} \wedge \frac{A_t^2}{N^{1 + 3\gep}}\;.$$
\end{lemma}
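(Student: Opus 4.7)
The plan is to start from Proposition \ref{Prop:BoundBracket}, which yields
$$\partial_t \langle A_\cdot\rangle_t \ge c \sum_{k=1}^{N-1} \min\Big(\delta \bar X_k(t_{-})\, \nabla X^\wedge_k(t_{-}),\; (\nabla X^\wedge_k(t_{-}))^2\Big),$$
and then to lower bound the right-hand side using the events $\cA_1, \cC_3(t), \cC_4(t)$ together with $A_t > 2N^\gep$. A preliminary observation, based on the identity $\eta^x_{k+1} = \eta^\wedge_{k+1} + \delta X_k - \delta X_{k+1}$, the nonnegativity of $\eta^x$, and the bound $\eta^\wedge_k \le 10\log N$ from $\cA_1$, is that the profile $k\mapsto \delta X_k(t)$ can increase by at most $10\log N$ between consecutive sites (although it may decrease arbitrarily). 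Letting $M := \max_k \delta X_k(t)$, this slow-variation property forces the upward ramp of $\delta X$ to have length at least $M/(10\log N)$, and on a sub-interval of length $\gtrsim M/(20\log N)$ one simultaneously has $\delta X_k \ge M/2$ and $\delta X_{k+1}\ge \delta X_k$, so that $\delta\bar X_k \ge M/4$.

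I would then split the analysis into two regimes, separated by the threshold $A_t = N^{1/2+7\gep/4}$ at which $N^{\gep/2} = A_t^2/N^{1+3\gep}$. In the large-area regime $A_t \ge N^{1/2+7\gep/4}$, the ramp construction is applied at a level $h \le M$ chosen so that the sub-ramp of values $\ge h$ has length at least $(\log N)^2$. Subdividing this sub-ramp into disjoint windows of size $(\log N)^2$ and using $\cC_4(t)$ to find in each window an index $k$ with $\nabla X^\wedge_k \ge 1/10$ (since the window's sum of squared gradients is $\ge (\log N)^2/100$, so the maximum squared gradient is $\ge 1/100$), each such index contributes $\min(h/20, 1/100)$ to the bracket derivative because $\delta\bar X_k \ge h/2$. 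Summing over the windows and optimising $h$ against the $A_t \le NM$ bound yields the sought $N^{\gep/2}$ lower bound.

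In the small-area regime $A_t < N^{1/2+7\gep/4}$, the target is $A_t^2/N^{1+3\gep}$. The plan is to apply the Cauchy–Schwarz inequality
$$\sum_k \nabla X^\wedge_k \, T_k \;\ge\; \frac{\big(\sum_k T_k\big)^2}{\sum_k \nabla X^\wedge_k} \;\ge\; \frac{\big(\sum_k T_k\big)^2}{CN}, \qquad T_k := \min(\delta\bar X_k, \nabla X^\wedge_k),$$
which reduces the task to proving $\sum_k T_k \gtrsim A_t/N^{3\gep/2}$. This is done by restricting the sum to indices with $\nabla X^\wedge_k \ge \tau$ for a well-chosen threshold $\tau$, using $\cC_3(t)$ to bound the number of excluded indices by $\tau^2 N\log N$, and exploiting the slow-variation structure of $\delta X$ to ensure that $\delta\bar X$ cannot be overly concentrated. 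The main obstacle lies precisely in this second regime: one must tune $\tau$ carefully so that the main term dominates both the Cauchy–Schwarz loss and the restriction loss, and one must also handle the boundary contributions from $\delta X_1$ and $\delta X_{N-1}$, which are not symmetrically controlled by $\cA_1$ and so require a separate ad hoc treatment.
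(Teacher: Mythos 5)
Your starting point (Proposition \ref{Prop:BoundBracket}), the slow-variation observation coming from $\cA_1$, and the use of $\cC_3$/$\cC_4$ are all the right ingredients, but the way you split the two regimes leaves a genuine gap. You split according to the size of $A_t$, whereas the dichotomy that makes the proof work is on $M:=\max_k\delta X_k(t)$ versus $N^{\gep}$. In your large-area regime the only lower bound on $M$ that the hypotheses provide is $M\ge A_t/N\ge N^{-1/2+7\gep/4}$, and the events $\cA_1,\cC_3,\cC_4$ constrain only $X^\wedge$ and the upward increments of $\delta X$; a flat profile $\delta X_k\equiv A_t/N$ (e.g.\ $\gep=1/4$, $A_t=N^{15/16}$, so $\delta X_k\equiv N^{-1/16}$) is perfectly compatible with all of them. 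For such a configuration your ramp has guaranteed length only $(M-h)/(10\log N)<1$, so not a single window of size $(\log N)^2$ exists and your large-area argument yields nothing. Even when $M$ is large, extracting one index per window with contribution capped by a constant can never give more than $O\big(M/(\log N)^3\big)$, and $M\gtrsim N^{\gep/2}(\log N)^3$ is not implied by $A_t\ge N^{1/2+7\gep/4}$ unless $\gep\ge 2/5$, which excludes the value $\gep=1/4$ actually used later. The paper's case split avoids all of this: if $M>N^{\gep}$, then $\cA_1$ forces $\delta X_k>N^{\gep}/2\ge\nabla X^\wedge_k$ on a stretch of length $N^{2\gep/3}$, and summing \emph{all} squared gradients on that stretch via $\cC_4$ gives $\ge cN^{2\gep/3}\ge N^{\gep/2}$; if $M\le N^{\gep}$, the bound $A_t^2/N^{1+3\gep}$ is proved irrespective of how large $A_t$ is, which is what covers your ``large area, small maximum'' configurations.

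The small-area regime has a second problem: the Cauchy--Schwarz step is intrinsically lossy, and the intermediate claim $\sum_k T_k\gtrsim A_tN^{-3\gep/2}$ that you reduce to is not only unproved (as you admit) but can fail under the stated events. Since $\cA_1$ does not constrain downward jumps of $\delta X$, you may park the whole discrepancy into $O(A_t/\log N)$ sawtooth pieces of height $O(\log N)$, placed at the sites of smallest gradient permitted by $\cC_3$, i.e.\ gradients of order $\sqrt{A_t/(N(\log N)^2)}$, while every $(\log N)^2$-window still satisfies $\cC_4$ thanks to the other sites. Then $\sum_k T_k=O\big(A_t^{3/2}N^{-1/2}(\log N)^{-2}\big)$, which is below $A_tN^{-3\gep/2}$ whenever $A_t\lesssim N^{1-3\gep}(\log N)^4$ -- a nonempty range of your small-area regime for $\gep=1/4$ -- and the resulting Cauchy--Schwarz bound then misses $A_t^2/N^{1+3\gep}$ (the lemma itself still holds there, via the direct sum $\sum_k\nabla X^\wedge_k\min(\delta\bar X_k,\nabla X^\wedge_k)$, so it is your route that breaks, not the statement). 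The paper's Lemma \ref{Lemma:Baibi} is designed exactly to bypass the $(\sum_kT_k)^2/N$ loss: in the case $M\le N^{\gep}$ one has $\delta\bar X_k\le B:=N^{\gep}$, so the mass $\sum_k\delta\bar X_k\ge A_t/2$ must be spread over at least $K=\lfloor A_t/(2N^{\gep})\rfloor$ sites, and $\cC_3$ bounds the sum of the $K$ smallest squared gradients below by $K^2/(2N\log N)$, giving $A_t^2/N^{1+3\gep}$ with room to spare. So the fix is to replace your split on $A_t$ by the split on $M$, and your Cauchy--Schwarz step by this rearrangement argument; the boundary terms you worry about are harmless, since $\sum_{k=1}^{N-1}\delta\bar X_k\ge A_t-M\ge A_t/2$ in the relevant case.
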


\begin{proof}[Proof of Lemma \ref{lem:B3}]

In order to diminish the restriction on $v$ imposed in Proposition \ref{prop:solskjaer}
we introduce the following stopping times with the objective of considering a process with smaller jump amplitude
$$\cR_i:= \inf\{t\ge \cT_{i-1}: \max_{k\in \lint 1, N-1\rint} \nabla X_k^\wedge(t) > 10 \log N \mbox{ or } A_t > N^{\frac32 - (i-2)\eta}\}\;.$$
We consider the supermartingale $(A_{(\cT_{i-1} +t)\wedge \cR_i})_{t\ge 0}$. For every $i\le L$, its maximal jump size up to time $\cT_i-\cT_{i-1}$ is bounded by
$$ 10 \log N + N^{\frac32 - (i-2)\eta}.$$
Indeed, the maximal variation of the area due to an update of site $k$ at time $t$ is bounded above by
$$ X_{k+1}^\wedge(t_-) - X_{k-1}^x(t_-) \le \delta X_{k-1}(t_-) + \nabla X_k^\wedge(t_-) \le A_{t_-} + 10 \log N\;.$$
We consider now the event
\begin{equation}\begin{split}
                 \cA^{(N)}_{4,1}&:= \{ \forall i \in \lint I+1,L \rint, \forall t\ge \cT_{i-1}, A_t \le  N^{\frac32 - (i-2)\eta}\}\\
                  \cA^{(N)}_{4,2}&:=\{  \forall i \in \lint I+1,L \rint, \ \langle A\rangle_{\cT_{i}\wedge \cR_i} - \langle A\rangle_{\cT_{i-1}\wedge \cR_i}  \le v_i \},\\
\cA^{(N)}_{4}&:= \cA^{(N)}_{4,1}\cap \cA^{(N)}_{4,2}.
                \end{split}
\end{equation}
with $v_i:=4 (10 \log N + N^{\frac32 - (i-2)\eta})^2$.
The standard union bound and a supermartingale version of Doob's maximal inequality (sometimes refered to as Ville's inequality) show that 
$\bbP[(\cA^{(N)}_{4,1})^{\cc}]\le 3(L-I) N^{-\eta}$, and hence converges to $0$.
Now, applying Proposition \ref{prop:solskjaer} to $(A_{(\cT_{i-1} +t)\wedge \cR_i})_{t\ge 0}$ with $v=v_i$ yields $\lim_{N\to \infty}\P(\cA^{(N)}_{4,2})=1$.

\medskip

\noindent Our final step is to prove 
$$\cB_{(3)}\supset \cA_1\cap \cA_2 \cap \cA_4 \cap \cB_{(1)} \cap \cB_{(2)}\;.$$
We place ourselves on the event $\cA_1\cap \cA_2 \cap \cA_4 \cap \cB_{(2)}$ and we proceed by contradiction.  Consider the smallest element $j$ of $\lint 1,K\rint$ such that 
$\cT_j-\cT_{j-1} \ge 2^{-j} N^2$ (note that as we are on $\cB_{(2)}^N$ we must have $j\ge I+1$).
Because we are on the event $\cA_1 \cap \cA_{4,1}$, the reader can check that we must have 
$$\cR_j \ge t_{\delta/2}+N^2 \ge \cT_{j-1} + 2^{-j} N^2.$$
Hence by Lemma \ref{Lemma:LowBd2} , $\cA_{4,2}$ and $\cA_2$ imply that 
\begin{multline}
 v_j \ge \langle A\rangle_{\cT_{j-1}+2^{-j} N^2} - \langle A\rangle_{\cT_{j-1}} \\ \ge  \Big(N^{\frac18} \wedge N^{\frac{5}{4}-2j\eta}\Big) \int^{\cT_{j-1}+2^{-j} N^2}_{\cT_{j-1}}\ind_{\cC_3(s)\cap \cC_4(s)}\dd s \ge 2^{-j-1} \Big(N^{\frac{17}{8}} \wedge N^{\frac{13}{4}-2j\eta}\Big),
 \end{multline}
where for the inequality we proceed as in the proof of Lemma \ref{lem:B2}
to show that the integral is larger than $2^{-j-1}N^2$. This yields the desired contradiction.
%
\end{proof}

We are left with proving Lemma \ref{Lemma:LowBd2}. The first step is the following technical estimate.
\begin{lemma}\label{Lemma:Baibi}
 Given $B>0$, let $(a_i)_{i=1}^n$  be an increasing sequence of positive real numbers with 
 $\max a_i\le B$, and $(b_i)_{i=1}^n$  an arbitrary sequence in $\bbR_+$ with $\max b_i\le B$, 
 and let us set $\sigma:=\sum_{i=1}^n b_i$, and $K:=\lfloor \sigma/B \rfloor$.
 Then we have 
 \begin{equation}
  \sum_{i=1}^n a_i (a_i \wedge b_i)\ge  \sum_{i=1}^K a^2_i
 \end{equation}
If $K=0$ we have 
\begin{equation}\label{subopt}
 \sum_{i=1}^n a_i (a_i \wedge b_i)\ge  a_1(a_1\wedge \sigma).
\end{equation}
\end{lemma}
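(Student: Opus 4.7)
The approach is standard in optimization-under-constraints problems of this type: first reduce to the extremal configuration of $b$ via a swap/rearrangement argument, then handle the remaining structured configuration by a case analysis together with the mass constraint $\sum b_i = \sigma$.

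\textbf{Step 1 (Reduction to monotone $b$).} I would show that rearranging $b$ in non-increasing order does not increase $F(b):=\sum_{i=1}^n a_i(a_i\wedge b_i)$. For a single swap, with $i<j$ (hence $a_i\le a_j$) and $b_i<b_j$, I would set $f_a(x):=a(a\wedge x)=a\min(a,x)$ and observe that, since $f_a$ is piecewise linear with slopes $a$ on $[0,a]$ and $0$ on $[a,\infty)$, the map $x\mapsto f_{a_j}(x)-f_{a_i}(x)$ is non-decreasing. Applied at $x=b_i$ and $x=b_j$, this yields
\[
f_{a_j}(b_j)+f_{a_i}(b_i)\ \ge\ f_{a_j}(b_i)+f_{a_i}(b_j),
\]
so the swap does not increase $F$. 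Iterating, I may assume $b_1\ge b_2\ge\cdots\ge b_n$.

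\textbf{Step 2 (Structural decomposition).} With $b$ non-increasing and $a$ non-decreasing, the difference $b_i-a_i$ is non-increasing, so there exists $j^\star\in\{0,1,\dots,n\}$ such that $b_i\ge a_i$ for $i\le j^\star$ and $b_i<a_i$ for $i>j^\star$. This gives the clean decomposition
\[
F(b)\ =\ \sum_{i\le j^\star} a_i^2\ +\ \sum_{i>j^\star} a_i b_i.
\]
If $j^\star\ge K$, the first sum alone already dominates $\sum_{i=1}^K a_i^2$ and we are done.

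\textbf{Step 3 (The key case $j^\star<K$).} Here the target inequality reduces, after cancelling the common piece $\sum_{i\le j^\star}a_i^2$, to
\[
\sum_{i>K} a_i b_i\ \ge\ \sum_{j^\star<i\le K} a_i(a_i-b_i).
\]
Bounding $a_i\ge a_{K+1}\ge a_K$ on the left and $a_i\le a_K$ on the right, it suffices to prove the mass-balance inequality
\[
\sum_{i>K} b_i\ \ge\ \sum_{j^\star<i\le K}(a_i-b_i).
\]
Rewriting $\sum_{i>K}b_i=\sigma-\sum_{i\le j^\star}b_i-\sum_{j^\star<i\le K}b_i$, this is equivalent to $\sigma-\sum_{i\le j^\star}b_i\ge \sum_{j^\star<i\le K}a_i$. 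Since $\sigma\ge KB$, $\sum_{i\le j^\star}b_i\le j^\star B$ and $a_i\le B$, both sides are sandwiched by $(K-j^\star)B$, which closes the argument.

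\textbf{Step 4 (The case $K=0$, \eqref{subopt}).} Here I would argue directly, without needing the swap. Since $a_i\ge a_1$, one has $a_i\wedge b_i\ge a_1\wedge b_i$, and hence $a_i(a_i\wedge b_i)\ge a_1(a_1\wedge b_i)$. Summing and using the elementary inequality $\sum_{i=1}^n (c\wedge b_i)\ge c\wedge \sum_{i=1}^n b_i$ (valid for any $c\ge 0$ and $b_i\ge 0$, verified by the two cases ``all $b_i\le c$'' and ``some $b_i>c$''), one obtains $F(b)\ge a_1(a_1\wedge\sigma)$.

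\textbf{Expected obstacle.} The bookkeeping in Step 3 is the main subtle point: one has to identify $j^\star$ correctly, cancel the shared $\sum_{i\le j^\star}a_i^2$ contribution, and then use the three constraints $\sigma\ge KB$, $b_i\le B$, $a_i\le B$ in exactly the right places so that $(K-j^\star)B$ appears simultaneously as an upper bound of $\sum_{j^\star<i\le K}a_i$ and as a lower bound of $\sigma-\sum_{i\le j^\star}b_i$. The swap reduction (Step 1) and the $K=0$ bound (Step 4) are essentially routine.
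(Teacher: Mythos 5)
Your proof is correct: the swap inequality in Step 1 is valid (for $a_i\le a_j$ the map $x\mapsto a_j\min(a_j,x)-a_i\min(a_i,x)$ is indeed non-decreasing, so sorting $b$ non-increasingly only lowers $F$), the single-crossing decomposition at $j^\star$ in Step 2 is right, the cancellation and the chain $\sum_{i>K}a_ib_i\ge a_K\sum_{i>K}b_i\ge a_K\sum_{j^\star<i\le K}(a_i-b_i)\ge\sum_{j^\star<i\le K}a_i(a_i-b_i)$ in Step 3 closes the case $j^\star<K$ via $\sigma\ge KB$, and Step 4 gives \eqref{subopt}. However, your route is genuinely different from the paper's. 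The paper first replaces $a_i\wedge b_i$ by the pointwise bound $a_ib_i/B$ (valid since $a_i,b_i\le B$) and then minimizes the linear functional $\sum_i a_i^2 b_i$ over the constraint set $\{0\le b_i\le B,\ \sum_i b_i=\sigma\}$: the minimizer concentrates mass $B$ on the $K$ smallest $a_i$'s, giving $\sum_{i\le K}a_i^2$ at once; the case $\sigma<B$ is handled there by reducing, by monotonicity in the $b_i$'s, to $\sigma\le a_1$, where $a_i\wedge b_i=b_i$. Your argument avoids the "reader can check" extremal-configuration step by making everything explicit (rearrangement, crossing index, mass balance), at the cost of more bookkeeping; the paper's version is shorter and sidesteps the sorting entirely. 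Your Step 4 is also a direct proof of \eqref{subopt} that does not even use $K=0$, which is a mild strengthening. One detail worth a sentence in Step 3: you use $a_{K+1}$, i.e. you implicitly need $K<n$; this is automatic in that case, since $j^\star<K$ forces some $b_i<a_i\le B$, hence $\sigma<nB$ and $K\le n-1$ (and $K=n$ would force all $b_i=B$, hence $j^\star=n\ge K$).
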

\begin{proof}
 Let us start with the case when $\sigma\ge B$. 
 Using that $a_i$ and $b_i$ are bounded by $B$ we have
 $$ \sum_{i=1}^n a_i (a_i \wedge b_i)\ge \sum_{i=1}^n \frac{a^2_i b_i}{B}.$$
 Now the reader can check that if $\sigma$ is fixed,
 the r.h.s.~is minimized when $b_i=B$ for $i\le K$, $b_{K+1}=\sigma-BK$, and $b_i=0$ for $i>K+1$.
 When  $\sigma< B$, it is sufficient to check \eqref{subopt} when $\sigma\le a_1$ by monotonicity, and in that case we have  
 $$\sum_{i=1}^n a_i (a_i \wedge b_i)=\sum_{i=1}^n a_i b_i \ge a_1 \sum_{i=1}^n b_i.$$
\end{proof}
\begin{proof}[Proof of Lemma \ref{Lemma:LowBd2}]
First assume that $\max_k |X^{\wedge}_k(t)-X^{x}_k(t)| > N^{\gep}$. As in the proof of Lemma \ref{Lemma:LowBd1}, the bound on the gradients given by $\cA_1$ ensures that
$$ \delta X_\ell(t) > N^\gep \Rightarrow \Big(\forall k\in \lint \ell- N^{\frac{2\gep}{3}},\ell \rint,\; \delta X_k(t) > \frac{N^\gep}{2}\Big)\;.$$
Consequently, if we let $\ell$ be an integer for which $\delta X_\ell(t) > N^\gep$ we get the bound
\begin{align*}
\partial_t \langle A_\cdot\rangle_t &\ge c \sum_{k\in \lint \ell- N^{\frac{2\gep}{3}},\ell \rint}  \big(\nabla X^\wedge_{k}(t)\big)^2\;.
\end{align*}
By $\cC_4(t)$, we thus deduce that 
\begin{align*}
\partial_t \langle A_\cdot\rangle_t &\ge N^{\frac{\gep}{2}} \;.
\end{align*}
Next, assume that $\max_k |X^{\wedge}_k(t)-X^{x}_k(t)| \le N^{\gep}$. Set $B=N^{\gep}$. We apply Lemma \ref{Lemma:Baibi} with $(a_i)_{i=1}^{N}$ being the ordered sequence of the $(\nabla X_k^\wedge(t))_{k=1}^N$ and $(b_i)_{i=1}^N$ being the corresponding $\delta \bar X_k(t)$'s. Since $\sum_k \delta \bar{X}_k(t) \ge A_t /2$ and since $A_t\ge 2 N^\gep$, the lemma combined with $\cC_3(t)$ and Proposition \ref{Prop:BoundBracket} gives the following lower bound
$$ \partial_t \langle A_\cdot\rangle \ge c \sum_{i=1}^{\lfloor (A_t N^{-\gep})/2 \rfloor} a_i^2 \ge \frac {c' (A_t N^{-\gep})^2} {N\log N} \ge \frac{A_t^2}{N^{1+3\gep}}\;.$$
\end{proof}

\subsection{Controlling the $\cT_i$ increments for $i\in \lint K+1,L\rint$} \label{sec:A4}

Following our plan we now prove
\begin{lemma}\label{lem:B4}
  We have 
 \begin{equation}
  \lim_{N\to \infty} \bbP[\cB_{(4)}^N]=1.
 \end{equation}
\end{lemma}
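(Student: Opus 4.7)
The plan is to replicate the structure of the proof of Lemma \ref{lem:B3}---Proposition \ref{prop:solskjaer} applied to the area supermartingale, together with an auxiliary stopping time that caps the jump sizes---but to replace Lemma \ref{Lemma:LowBd2} by a new lower bound on $\partial_t\langle A_\cdot\rangle_t$ that remains useful in the regime $A_t \le 1$. Indeed for $i \in \lint K+1, L\rint$ one has $A_{\cT_{i-1}} \le N^{3/2-(i-1)\eta}$, which may be much smaller than $1$ (even below $N^{-1}$ when $i$ is close to $L$), so that neither Lemma \ref{Lemma:LowBd1} (which requires $A_t \ge N(\log N)^4$) nor Lemma \ref{Lemma:LowBd2} (which requires $A_t > 2N^\gep$ for some fixed $\gep > 0$) is directly applicable.

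The first step is therefore to establish, on $\cA_1 \cap \cC_1(t)$, an analogue valid without any lower bound on $A_t$, of the form
$$ \partial_t\langle A_\cdot\rangle_t \;\ge\; c\, a_1(t)\cdot \bigl(a_1(t) \wedge A_t\bigr) \;\ge\; c'\min\!\left(\frac{1}{N\log N},\frac{A_t}{\sqrt{N\log N}}\right),$$
where $a_1(t) := \min_k \nabla X^\wedge_k(t)$. The derivation combines Proposition \ref{Prop:BoundBracket} with the degenerate $K=0$ branch \eqref{subopt} of Lemma \ref{Lemma:Baibi}, applied with $B := A_t + 10\log N$. Under $\cA_1$, all gradients $\nabla X^\wedge_k$ are bounded by $10\log N$, while the identity
$$\sum_{k=1}^{N-1}\delta\bar X_k(t) \;=\; A_t - \tfrac{1}{2}\bigl(\delta X_1(t) + \delta X_{N-1}(t)\bigr) \;\ge\; \tfrac{A_t}{2}$$
ensures $\sigma \le B$, so that we are in the $K=0$ case of Lemma \ref{Lemma:Baibi}; the bound $a_1(t)\ge (N\log N)^{-1/2}$ supplied by $\cC_1(t)$ then produces the displayed minimum.

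The second step is essentially a verbatim copy of the proof of Lemma \ref{lem:B3}. I introduce
$$\cR_i:=\inf\big\{t\ge\cT_{i-1}:\max_k\nabla X^\wedge_k(t)>10\log N\ \text{or}\ A_t > N^{3/2-(i-2)\eta}\big\},$$
and define $\cA^{(N)}_5 := \cA^{(N)}_{5,1}\cap\cA^{(N)}_{5,2}$ with $\cA^{(N)}_{5,1}:=\{\forall i\in\lint K+1,L\rint,\forall t\ge\cT_{i-1}:A_t\le N^{3/2-(i-2)\eta}\}$ (ensured with probability tending to $1$ via Ville's maximal inequality exactly as for $\cA^{(N)}_{4,1}$), and $\cA^{(N)}_{5,2}:=\{\forall i\in\lint K+1,L\rint : \langle A\rangle_{\cT_i\wedge\cR_i}-\langle A\rangle_{\cT_{i-1}\wedge\cR_i}\le v_i\}$, where $v_i := 4(10\log N + N^{3/2-(i-2)\eta})^2$. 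The event $\cA^{(N)}_{5,2}$ has probability tending to $1$ by Proposition \ref{prop:solskjaer}, since on $\{t\le\cR_i\}$ the jumps of $A$ are bounded by $d_i := 10\log N + N^{3/2-(i-2)\eta}$. One then argues $\cB^{N}_{(4)}\supset \cA_1\cap\cA_2\cap\cA_5\cap\cB^{N}_{(3)}$ by contradiction: if $j$ is the smallest index in $\lint K+1, L\rint$ for which $\cT_j-\cT_{j-1}>2^{-j}N^2$, then on the interval $[\cT_{j-1},\cT_{j-1}+2^{-j}N^2]\subset [t_{\delta/2},t_{\delta/2}+N^2]$ one has $A_s \ge N^{3/2-j\eta}$, and integrating the new bracket bound over the subset of this interval on which $\cC_1(s)$ holds (a fraction at least $2^{-j-1}$ of the interval, by $\cA_2$) produces a value of $\Delta_j \langle A\rangle$ that strictly exceeds $v_j$, contradicting $\cA^{(N)}_{5}$.

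The main obstacle is ensuring that the new bracket bound, which decays like $A_t/\sqrt{N\log N}$ when $A_t$ becomes tiny, still produces enough bracket growth over $2^{-j}N^2$ to dominate $v_j$ when inserted into Proposition \ref{prop:solskjaer}. This boils down to a direct verification, splitting the range $\lint K+1, L\rint$ according to whether the floor $N^{3/2-j\eta}$ of $A_t$ on $[\cT_{j-1},\cT_j)$ lies above or below the crossover threshold $(N\log N)^{-1/2}$ separating the two branches of the minimum above. In both regimes the corresponding ratio $8(a-b)/\sqrt{v_j}$ from Proposition \ref{prop:solskjaer} tends to $0$ uniformly in $j$, the essential input being that $(j-1)\eta > 5/4 - \eta$ throughout $\lint K+1, L\rint$, since $K \ge \lfloor 5/(4\eta)\rfloor$.
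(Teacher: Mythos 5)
Your proposal is correct and is essentially the paper's own argument: your first step is exactly Lemma \ref{Lemma:LowBd5} (Proposition \ref{Prop:BoundBracket} combined with the $K=0$ bound \eqref{subopt} of Lemma \ref{Lemma:Baibi} and the event $\cC_1(t)$), and your second step reproduces the proof of Lemma \ref{lem:B3} — the paper simply reuses $\cR_i$, $v_i$ and $\cA_4$, which were already defined for all $i\in\lint I+1,L\rint$ — ending with the same two-regime contradiction (the paper's $v_j\ge 2^{-j-1}N(\log N)^{-2}(1\wedge N^{2-j\eta})$), which for $\eta$ small is impossible because $(j-2)\eta>1$ when $j\ge K+1$. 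Only cosmetic repairs are needed: take $B=A_t+20\log N$ (with $A_t+10\log N$ the hypothesis $\max_i a_i\le B$ can fail under $\cA_1$, though \eqref{subopt} in fact requires no such bound), include $\cB_{(1)}^N\cap\cB_{(2)}^N$ in the intersection so that $[\cT_{j-1},\cT_{j-1}+2^{-j}N^2]\subset[t_{\delta/2},t_{\delta/2}+N^2]$ as you use, and note that the condition $(j-1)\eta>5/4-\eta$ is what makes the accumulated bracket dominate $v_j$ in the first regime, not what makes the ratio $8(a-b)v_j^{-1/2}$ small (that ratio is $\le 4N^{-\eta}$ automatically from the choice $v_j=4d_j^2$).
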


This time we use the following control for the martingale bracket, which is almost immediate to prove

\begin{lemma}\label{Lemma:LowBd5}
On the event $\cA_1\cap\cC_1(t)$ we have
$$\partial_t \langle A_{\cdot} \rangle_t \ge \frac {1 \wedge (\sqrt{N} A_t)}{N (\log N)^2} \;.$$
\end{lemma}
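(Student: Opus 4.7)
The plan is to invoke Proposition \ref{Prop:BoundBracket} and combine it with the lower bound on the gradients provided by the event $\cC_1(t)$. Setting $a:=(N\log N)^{-1/2}$, on $\cC_1(t)$ we have $\nabla X^{\wedge}_k(t_-)\ge a$ for every $k$. Factoring the minimum in Proposition \ref{Prop:BoundBracket} as $\nabla X^{\wedge}_k \min(\delta \bar X_k,\nabla X^{\wedge}_k)$, and using $\nabla X^{\wedge}_k\ge a$ in both places, we get
\begin{equation*}
\partial_t\langle A_\cdot\rangle_t\;\ge\; c\,a\,\Sigma_t,\qquad \Sigma_t:=\sum_{k=1}^{N-1}\min(\delta\bar X_k(t_-),a).
\end{equation*}

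The main step is then to show that $\Sigma_t\ge \min(a,A_t/2)$. I would split into two cases. If there exists some $k$ with $\delta\bar X_k(t_-)\ge a$, then a single term already contributes $a$ to $\Sigma_t$. Otherwise every $\delta\bar X_k(t_-)<a$, and $\Sigma_t$ equals the full sum $\sum_k\delta\bar X_k(t_-)$; using $\delta X_0=\delta X_N=0$ and a direct computation one obtains the identity
\begin{equation*}
\sum_{k=1}^{N-1}\delta\bar X_k(t_-)\;=\;A_t-\tfrac12\bigl(\delta X_1(t_-)+\delta X_{N-1}(t_-)\bigr)\;\ge\;\tfrac12 A_t,
\end{equation*}
since $\delta X_1+\delta X_{N-1}\le \sum_k\delta X_k=A_t$. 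This yields the claim.

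Combining the two bounds gives
\begin{equation*}
\partial_t\langle A_\cdot\rangle_t\;\ge\;\frac{c}{2}\,\min\!\left(\frac{2}{N\log N},\,\frac{A_t}{\sqrt{N\log N}}\right).
\end{equation*}
A straightforward case check against the target $\frac{1\wedge(\sqrt{N}A_t)}{N(\log N)^2}$ finishes the proof: when $A_t\le 2/\sqrt{N\log N}$ the active branch is $cA_t/(2\sqrt{N\log N})$, which beats $A_t/(\sqrt N(\log N)^2)$ by a factor of order $(\log N)^{3/2}$; when $A_t$ is larger, the bound $c/(N\log N)$ beats $1/(N(\log N)^2)$ by a factor of order $\log N$. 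Both inequalities hold for $N$ sufficiently large.

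The main (minor) obstacle is the boundary correction in the relation between $\sum_k\delta\bar X_k$ and $A_t$; handling it via the bound $\delta X_1+\delta X_{N-1}\le A_t$ is clean and gives the factor $1/2$. Note that the event $\cA_1$, which provides an upper bound on the gradients, does not appear to be needed for this particular estimate; it is included to keep the statement compatible with the framework of Sections~\ref{sec:A2}--\ref{sec:A3}.
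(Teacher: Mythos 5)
Your proof is correct, and it reaches the same quantitative bound as the paper, but by a slightly different (more self-contained) route. The paper's own proof is a two-line affair: it feeds the ordered gradients $a_1\le\dots\le a_{N-1}$ and the corresponding $\delta\bar X_k$'s into the rearrangement estimate of Lemma \ref{Lemma:Baibi}, which together with Proposition \ref{Prop:BoundBracket} yields $\partial_t\langle A_\cdot\rangle_t\ge c\,a_1(a_1\wedge A_t)$, and then invokes $\cC_1(t)$ to bound $a_1\ge (N\log N)^{-1/2}$. You instead use $\cC_1(t)$ first to replace every gradient by $a=(N\log N)^{-1/2}$ and then handle the remaining sum $\sum_k\min(\delta\bar X_k,a)$ by hand, via the dichotomy ``some $\delta\bar X_k\ge a$ / all $\delta\bar X_k<a$'' together with the exact identity $\sum_k\delta\bar X_k=A_t-\tfrac12(\delta X_1+\delta X_{N-1})\ge A_t/2$; this is precisely the fact the paper uses (stated without the boundary computation) in the proof of Lemma \ref{Lemma:LowBd2}, so your argument is an elementary substitute for Lemma \ref{Lemma:Baibi} in the only regime where it is needed here (its $K=0$/single-term cases), at no loss in the constants. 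Your final case check against $\frac{1\wedge(\sqrt N A_t)}{N(\log N)^2}$ is correct for $N$ large, which is the same implicit absorption of constants and powers of $\log N$ performed in the paper. Your closing remark is also accurate: $\cA_1$ plays no role in this particular estimate (in the paper's version one may take the trivial bound $B=N$ in Lemma \ref{Lemma:Baibi}), and it is kept in the statement only for consistency with the events used in Sections \ref{sec:A2}--\ref{sec:A4}.
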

\begin{proof}

Combining Proposition \ref{Prop:BoundBracket} and Lemma \ref{Lemma:Baibi}, we obtain
$$ \partial_t \langle A_\cdot\rangle_t \ge c a_1(a_1 \wedge A_t) \ge \frac c  {\sqrt{N \log N}} \Big( \frac1{\sqrt{N \log N}} \wedge A_t \Big)\;.$$
\end{proof}

\begin{proof}[Proof of Lemma \ref{lem:B4}]
%
We only need to show that 
$$\cB_{(4)}^N\supset \cA_1\cap \cA_2 \cap \cA_4 \cap \cB_{(1)}^N \cap \cB_{(2)}^N\cap \cB_{(3)}^N.$$
A contradiction can be obtained exactly as in the proof of Lemma \ref{lem:B4}. More precisely assuming that $\cB_{(4)}^N$ does not hold 
we obtain that for some $j\in \lint K+1,L\rint$
\begin{equation}
 v_j \ge 2^{-j-1} N(\log N)^{-2} (1 \wedge N^{2-j\eta})
\end{equation}
\end{proof}

\subsection{Proof of Proposition \ref{Prop:Coalesce}} \label{sec:conclude}

We  finally conclude the proof by proving
 \begin{equation}\label{final}
\lim_{N\to \infty}\bbP[ \tau \le t_{\delta/2}+N^2+ 2\log N]=1\;.
 \end{equation}

We first need to restrict ourself to an adequate event of large probability.
Using the fact that  $(A_{\cT_L+t})_{t\ge 0}$ is a supermartingale, combinining the Martingale stopping theorem and Doob's inequality we obtain that  
\begin{equation}
 \bbP\left[ \exists t\ge 0, \ A_{\cT_L+t} \ge N^{-1} \right]\le 3 N^{5/2-L\eta}.
\end{equation}
Since Lemma \ref{Lemma:Contract}, \ref{lem:B2}, \ref{lem:B3} and \ref{lem:B4} assert that $\cT_L\le t_{\delta/2}+N^2$ with large probability, we have $\lim_{N\to \infty} \bbP[\cA^{(N)}_{5,1}]=1$ where
$$ \cA^{(N)}_{5,1} := \{ A_{t_{\delta/2}+N^2} < N^{-1} \}\;.$$
We also need to make sure that  the gradients in our dynamics are not too small.
\begin{lemma}\label{Lemma:BruteForce}
Setting 
$$\cA^{(N)}_{5,2}:= \left\{ \min_{k\in \lint 1,N-1\rint, s\in [0, 2\log N]}  \nabla X^\wedge_k(t_{\delta/2}+N^2+s) > \frac{1}{\sqrt{N} \log N} \right\},$$
we have
\begin{equation}
 \lim_{N\to \infty }\bbP\left[\cA^{(N)}_{5,2}\right]=1.
\end{equation}
\end{lemma}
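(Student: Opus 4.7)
The plan is to use Proposition \ref{Prop:AbsCont} to reduce the statement to a uniform-in-time lower bound on the gradients for a \emph{stationary} copy of the process on a time window of length $2\log N$, and then to exploit the piecewise constancy of each individual gradient to turn the problem into a union bound over finitely many ``candidate'' times at which the gradient can be small.

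\textbf{Step 1 (reduction to stationarity).} Since $t_{\delta/2}+N^2\ge t_{\delta/2}$ and total variation distance to equilibrium is non-increasing along the semigroup, Proposition \ref{Prop:AbsCont} applied at level $\delta/2$ gives $\|P^{\wedge}_{t_{\delta/2}+N^2}-\pi_{N,\alpha}\|_{TV}\to 0$. I run in parallel a process $(Y(t))_{t\ge t_{\delta/2}+N^2}$ stationary under $\pi_{N,\alpha}$: using a maximal coupling at time $t_{\delta/2}+N^2$ and then the graphical construction of Proposition \ref{grandgradient} with common Poisson clocks and Beta marks, one can arrange $X^{\wedge}(t)=Y(t)$ for all $t\in[t_{\delta/2}+N^2,t_{\delta/2}+N^2+2\log N]$ with probability $1-o(1)$. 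Setting $a_N:=1/(\sqrt N\log N)$, it then suffices to show
$$ \bbP\Big(\min_{k\in\lint 1,N-1\rint,\,s\in[0,2\log N]}\nabla Y_k(s)>a_N\Big)\to 1. $$

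\textbf{Step 2 (single-gradient piecewise constancy).} Fix $k$. The function $s\mapsto \nabla Y_k(s)=Y_{k+1}(s)-Y_{k-1}(s)$ is piecewise constant, with jumps occurring exactly at the update times of sites $k-1$ or $k+1$; call these times $T_1^k<\cdots<T_{J_k}^k$ in $[0,2\log N]$, so $J_k$ is Poisson with mean at most $4\log N$. Consequently
$$ \min_{s\in[0,2\log N]} \nabla Y_k(s)=\min\Big(\nabla Y_k(0),\,\nabla Y_k(T_1^k),\,\ldots,\,\nabla Y_k(T_{J_k}^k)\Big). $$

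\textbf{Step 3 (equilibrium tail and union bound).} The estimate \eqref{tail} together with Lemma \ref{Lemma:AbsCont} (and its mirror version for indices close to $N$, obtained by symmetry) gives $\pi_{N,\alpha}(\nabla x_k\le a_N)\le C\,a_N^{2\alpha}$ for a constant $C=C(\alpha)$. Stationarity yields this bound for $\nabla Y_k(0)$. At each update time $T_j^k$, reversibility of $\pi_{N,\alpha}$ under a single-site Beta-resampling, combined with the independence of the Poisson clocks from the state (Palm calculus / Campbell's formula), implies that the post-update configuration $Y(T_j^k)$ is again marginally $\pi_{N,\alpha}$-distributed; hence
$$ \bbE\big[\#\{j\le J_k:\nabla Y_k(T_j^k)<a_N\}\big]\le 4\log N\cdot C\,a_N^{2\alpha}. $$
Markov's inequality and a union bound over $k\in\lint 1,N-1\rint$ give
$$ \bbP\Big(\min_{k,s}\nabla Y_k(s)<a_N\Big)\le N\Big(C\,a_N^{2\alpha}+4C\log N\cdot a_N^{2\alpha}\Big)=O\!\big(N^{1-\alpha}(\log N)^{1-2\alpha}\big), $$
which vanishes for every $\alpha\ge 1$.

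\textbf{Expected main obstacle.} The only genuinely subtle point is the justification of Step 3 in its Palm formulation: namely that the \emph{post-update} marginal at each of the random times $T_j^k$ is still $\pi_{N,\alpha}$-distributed despite the conditioning on ``an update happens at site $k\pm 1$ at time $T_j^k$''. This follows from the fact that in the graphical construction the Poisson clocks are independent of the configuration and reversibility makes a single resampling preserve $\pi_{N,\alpha}$; once this is spelled out, the rest is essentially the tail estimate already used throughout Section \ref{sec:tech2}.
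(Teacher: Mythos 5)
Your proposal is correct and follows essentially the same route as the paper: reduce to the stationary process via Proposition \ref{Prop:AbsCont}, observe that $\nabla X_k$ only changes at the (Poisson-many) update times of sites $k\pm1$, use that at each such time the configuration is still $\pi_{N,\alpha}$-distributed conditionally on the clocks (the heat-bath property you flag as the main obstacle is exactly the justification the paper invokes), and conclude by the tail bound \eqref{tail} together with Lemma \ref{Lemma:AbsCont} and a union bound over sites and update times.
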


\begin{proof}
First of all, by Proposition \ref{Prop:AbsCont} it suffices to control the probability of this event for the stationary process in the time interval $[0,2\log N]$, which we denote by $\bX$ for the rest of this proof. 
Recalling \eqref{tail} 
we have
\begin{equation}\label{recall}
\pi_{N,\alpha}\Big(\nabla x_k < \frac1{\sqrt N \log N}\Big) \le C N^{-1}(\log N)^{-2}\;.
\end{equation}
Now for each $k\in \lint 1,N-1 \rint$, consider the ordered set of update times $(t^{(k)}_i)_{i\ge 1}$ of the coordinate $k$ for our dynamics. We let $n_k$ be the number of updates of the site $k$ occurring in the time interval $[0,2\log N]$.

\medskip

As the dynamics is of heat-bath type, for every $i$ and $k$,  $\bX(t^{(k)}_i)$ is distributed like $\pi_{N,\alpha}$. This is also valid conditionally on the realization $(t^{(j)}_i)_{i,j}$ of the update times.
Hence considering that $\nabla X_k$ can only be altered at times $t^{(k\pm1)}_i$, from the union bound and \eqref{recall},
\begin{multline*}
 \bbP[ \exists t \in [0,2\log N], \exists k\in\lint 1, N-1\rint: \; \nabla X^\wedge_k(t) \le 1/ (\sqrt{N} \log N) \ | \ (t^{(j)}_i)_{i,j}]\\
 \le   2 C   N^{-1}(\log N)^{-2} \sum_{k=1}^{N-1} (n_{k}+1). 
\end{multline*}
Taking the expectation of the above we obtain the desired result.
\end{proof}

For every site $k\in \lint 1,N-1\rint$, let $(t^{(k)}_i)_{i=1}^{n_k}$ denote the random set of update times occurring in the interval $[t_{\delta/2}+N^2,t_{\delta/2}+N^2+2\log N]$. We set
$$ \cA^{(N)}_{5,3}:= \bigcap_{k=1}^{N-1} \{ n_k \in [1, 10\log N]\} \;.$$
The probability of $\cA^{(N)}_{5,3}$ goes to one (by a standard coupon collector argument for the lower bound while the upper bound is a direct union bound) and one can thus safely restrict oneself to the event 
$\cA_5 :=\cap_{i=1}^3 \cA_{5,i}$.\\

To conclude the proof, we are going to show that
\begin{equation}\label{showdown}
\lim_{N\to \infty} \bbP\left[ \exists k\in\lint 1,N-1  \rint, \exists i \le n_k:\;  X^{\wedge}_k(t^{(k)}_i)\ne X^{x}_k(t^{(k)}_i) \right]=0.  
\end{equation}
We are in fact going to prove an upper bound for this probability 
conditioned on both $(t^{(k)}_i)_{i=1}^{n_k}$ 
and the state of the system at the initial time $t_{\delta/2}+N^2$. We use the short-hand notation $\tilde \bbP,\,\tilde \bbE$ for this conditional probability and the corresponding expectation.\\

Say that an update at time $t^{(k)}_i$ is {\em successful} if $X^{\wedge}_k(t^{(k)}_i)= X^{x}_k(t^{(k)}_i)$. The strategy is to work recursively on the successive update times, and to use the following fact. If all the previous updates have been successful, then using Lemma \ref{Lemma:Bndq} the probability that the next update (occurring at site $k$ say) is not successful is bounded from above by (a constant times) $\delta \bar X_k$ divided by the largest gradient, this ratio being bounded by $\delta \bar X_k \sqrt{N} \log N$ on the event $\cA_{5,2}$. Since there are at most $10 \log N$ updates per site, since $\sum_k \delta \bar X_k$ is bounded by twice the area and since the area is non-increasing as long as the updates are all successful, we deduce that the probability \eqref{showdown} that  there exists an unsuccessful update is bounded by $A_{t_{\delta/2}+N^2} \sqrt{N} \log N < N^{-1/2}\log N$ on the event $\cA_{5,1}$. Thus, one can conclude. To put these heuristic observations on a firm ground, we need to introduce some notations.

Let $\tilde\tau$ be, among the set of update times $(t^{(k)}_i)_{i=1}^{n_k}$, the time of the first unsuccessful update: on the event that there are no unsuccessful updates, we set arbitrarily $\tilde\tau := t_{\delta/2} + N^2 + 2\log N$. Note that the event $\cA_{5,1}\cap\cA_{5,3}$ is measurable w.r.t.~the sigma-field generated by $(t^{(k)}_i)_{i=1}^{n_k}$ and the state of the system at the initial time $t_{\delta/2}+N^2$. Since the probability of this event goes to $1$, it suffices to show that
\begin{equation}\label{Eq:tildetau}
\lim_{N\to \infty }\P\Big( \ind_{\{\cA_{5,1}\cap\cA_{5,3}\}}\, \tilde{\P}\left(\tilde\tau < t_{\delta/2}+N^2+2\log N\right) \Big) = 0\;,
\end{equation}
in order to deduce that the merging time $\tau$ satisfies  \[\lim_{N\to \infty }\P(\tau \ge t_{\delta/2}+N^2+2\log N) = 0.\]

To prove \eqref{Eq:tildetau}, we set
$$\cA^{(i,k)}_{5,2}:= \left\{ \forall s\in [t_{\delta/2}+N^2, t^{(k)}_i),  \nabla X^\wedge_k(s) >  \frac{1}{\sqrt{N} \log N} \right\}\;.$$
Then,  almost surely
\begin{align*}
\tilde{\P}(\tilde\tau < t_{\delta/2}+N^2+2\log N) &= \tilde{\P}\big[\cup_{i,k} \{\tilde\tau = t_{i}^{(k)}\} \big]\\
&\le \tilde{\P}\big[\cup_{i,k} (\cA^{(i,k)}_{5,2})^\cc \big] + \tilde{\P}\big[\cup_{i,k} \{\tilde\tau = t_{i}^{(k)}\}\cap\cA^{(i,k)}_{5,2} \big]\\
&\le \tilde{\P}\big[(\cA_{5,2})^\cc \big] + \sum_{i,k} \tilde{\P}\big[ \{\tilde\tau = t_{i}^{(k)}\}\cap\cA^{(i,k)}_{5,2} \big]\;.
\end{align*}
The first term on the r.h.s.~goes to $0$ by Lemma \ref{Lemma:BruteForce}. Regarding the second term, we argue as follows. Recall that $\cF_t$ is the sigma-field generated by the system up to time $t$, and let $\cF_{t_-} = \sigma(\cup_{s < t} \cF_s)$. Using Lemma \ref{Lemma:Bndq} at the second line, for all $i$ and $k$ we have
\begin{align*}
\tilde{\P}\big[ \{\tilde\tau = t_{i}^{(k)}\}\cap\cA^{(i,k)}_{5,2} \big] &= \tilde{\E}\Big[\tilde{\E}\Big[\ind\left(X^{\wedge}_k(t^{(k)}_i)\ne X^{x}_k(t^{(k)}_i)\right) \,|\, \cF_{(t_{i}^{(k)})_-} \Big] \ind_{\cA^{(i,k)}_{5,2}\cap\{\tilde\tau \ge t_{i}^{(k)}\}}\Big]\\
&\le \tilde{\E}\Big[ C\frac{\delta \bar{X}_k((t^{(k)}_i)_-)}{\nabla X^\wedge_k((t^{(k)}_i)_-)} \ind_{\cA^{(i,k)}_{5,2}\cap\{\tilde\tau \ge t_{i}^{(k)}\}}\Big]\\
&\le C \sqrt N \log N\, \delta \bar{X}_k(t_{\delta/2} + N^2)\;.
\end{align*}
Consequently,
$$ \sum_{i,k} \tilde{\P}\big[ \{\tilde\tau = t_{i,k}\}\cap\cA^{(i,k)}_{5,2} \big] \le \sum_{i,k}  C\sqrt N \log N\,  \delta \bar{X}_k(t_{\delta/2} + N^2)\;,$$
and on the event $\cA_{5,1}\cap\cA_{5,3}$ this last expression is bounded by
$$C'\sqrt{N} (\log N)^2 A_{t_{\delta/2}+N^2} \le N^{-1/3}\,,$$
for some new constant $C'>0$. This concludes the proof of \eqref{Eq:tildetau}.

\section{From the top down to equilibrium}\label{Sec:AbsCont}
The goal of this section is to prove Proposition \ref{Prop:AbsCont}, that is: when started from the maximal configuration $\wedge=(N,\dots,N)$, setting $t_\delta=(1+\delta)\frac{\log N}{2\gap_N}$, one has 
\begin{align}\label{Eq:top}
\lim_{N\to\infty}\|P_{t_\delta}^\wedge-\pi_{N,\alpha}\|_{TV}= 0\,,
\end{align}
for any fixed $\delta>0$. 

Inspired by a strategy that was introduced in \cite{Lac16} in the context of the adjacent interchange process, we shall base our proof on a two-scale argument, that can be roughly described as follows. For any integer $K\geq 2$, consider the $K-1$ particles with labels $u_i:=\lfloor iN/K\rfloor$, $i=1,\dots,K-1$. These will be called the {\em special particles}. The proof of  \eqref{Eq:top} consists of  three main steps. 

\medskip
{\bf Step 1}. Starting from $\wedge$, after a time $t=t_{\delta/2}$, if $K$ is fixed and $N$ tends to $\infty$, then the joint distribution of the positions of the special particles  
\begin{align}\label{Eq:topk}
Y_i(t) = X_{u_i}(t)\,,\qquad i=1,\dots,K-1
\end{align}
is arbitrarily close to the corresponding equilibrium distribution, see Proposition \ref{prop:special} below. This step is based on a subtle use of the FKG inequality together with the control of the expected value of the variables $Y_i(t)$. 

\medskip
{\bf Step 2}.
Consider the censored dynamics obtained by freezing the positions of the special particles and letting the rest of the particles evolve as usual. We will show that if $K$ is taken proportional to $\delta^{-1}$, uniformly in the initial condition, the censored dynamics 
at time $s_\delta:=\delta\,\frac{\log N}{2\gap_N}$ has essentially reached the conditional equilibrium given by $\pi_{N,\alpha}$ conditioned on the positions of the special particles. For this step it will be sufficient to exploit an upper bound on the mixing time that is tight up to a constant factor as e.g.\ the one obtained in \cite{RW05} in the case $\ga=1$, see \eqref{Eq:roughUB} above.  

\medskip
{\bf Step 3}.
We combine the results in the previous two steps to obtain the desired conclusion. The key point is that the distance to equilibrium at time $t_\delta=t_{\delta/2}+s_{\delta/2}$ appearing in \eqref{Eq:top} satisfies 
\begin{align}\label{Eq:censor}
\|P_{t_\delta}^\wedge-\pi_{N,\alpha}\|_{TV}\leq \|P_{t_\delta,*}^{\wedge}-\pi_{N,\alpha}\|_{TV}\,,
\end{align}
where the distribution $P_{t_\delta,*}^{\wedge}$ is obtained by running the standard dynamics, starting from $\wedge$, for a time
$t_{\delta/2}$ and then by running from there the censored dynamics for a time $s_{\delta/2}$. This step requires an adaptation to our continuous setting of the so called censoring inequality of Peres and Winkler \cite{PWcensoring}.   

We start developing 
the above program with a discussion of the censoring inequality. We then move to the proof of the mentioned steps in the given order.  

\subsection{Censoring lemma}\label{sec:censor}
The censoring inequality established by Peres and Winkler \cite{PWcensoring} allows one to compare the distance to equilibrium at time $t$ for the process under consideration with the distance to equilibrium at time $t$ for a censored process in which some of the updates have been omitted according to a given censoring scheme. 
In the context of Glauber dynamics for monotone, finite state spin systems, their argument rests on the following two key properties of a monotone dynamics:  If the initial state has a distribution $\nu$ whose density w.r.t.\ the equilibrium measure $\pi$ is increasing, then for any $t\geq 0$, the distribution $\nu P_t$ of the state at time $t$ satisfies
\begin{enumerate}[1)]
\item  $\nu P_t$  has an increasing density w.r.t. $\pi$,
\item  $\nu P_t$ is stochastically lower than the distribution of the state of the censored dynamics, say $\nu P_{t,*}$, for any valid censoring scheme.
\end{enumerate}
Properties 1 and 2 then allow one to prove the censoring inequality
\begin{align}\label{Eq:censor2}
\|\nu P_{t}-\pi\|_{TV}\leq \|\nu P_{t,*}-\pi\|_{TV}.
\end{align}
We shall follow the same line of reasoning here.
However, 
a technical problem arises with respect to the usual discrete spin setting: to prove the censoring inequality \eqref{Eq:censor} we need to start with the distribution $\nu=\delta_{\wedge}$ which has no density w.r.t.\ equilibrium, while the strategy outlined above is 
crucially based on the existence of such a density. 
Notice also that $P_{t}^\wedge = \delta_\wedge P_t$ has no density w.r.t. equilibrium at any time $t\geq 0$ and so one cannot get around this problem by regularising the measure with a burn-in time. We shall need a more general version of the above properties   
which extends to a certain family of measures with a singular part. 

We start by defining the latter. For $k\in\lint1,N\rint$, define the nested sets 
\begin{equation}\label{Eq:omegakn}
\gO_{k,N} = \left\{
 x\in\gO_N: \,x_{k}= N\right\}.
\end{equation}
Thus $\gO_{1,N}=\{\wedge\}$ is the maximal configuration, while $\gO_{N,N} = \gO_N$ is the whole set of particle positions. 
Let $\pi_{k,\alpha}$ denote the probability measure supported on $\gO_{k,N}$ defined as the law of the random vector $(x_1,\dots,x_{N-1})$ of the partial sums $x_j=\sum_{i=1}^j\eta_j$ where $\eta_1,\dots,\eta_k$ are i.i.d.\ with distribution $\gG(\ga,\gl)$, for some arbitrary $\gl>0$,  conditioned to $\sum_{i=1}^k\eta_i=N$, while $\eta_{k+1},\dots,\eta_N\equiv 0$.  Notice  that this notation is consistent with our notation $\pi_{N,\alpha}$ for the equilibrium measure on $\gO_N$. 
If a probability measure $\mu$ supported on $\gO_{k,N}$ is absolutely continuous w.r.t.\ $\pi_{k,\alpha}$, we write $\frac{d\mu}{d\pi_{k,\alpha}}$ for the corresponding density, and say that $\mu$ belongs to the family $\cS_k$ if $\frac{d\mu}{d\pi_{k,\alpha}}$ is an increasing function. In particular, $\cS_1=\{\delta_\wedge\}$, while $\cS_N$ coincides with the set of distributions on $\gO_N$ with an increasing density with respect to equilibrium. 
Finally, we define the family of measures $\cS$ consisting of all probability measures $\mu$ on $\gO_N$ such that 
\begin{align}\label{Eq:familyS}
\mu=\sum_{k=1}^N \gamma_k \mu_k\,,\qquad \mu_k\in\cS_k\,,
\end{align}
for some $\gamma_k\geq 0$ with $\sum_{k=1}^N\gamma_k=1$. 
Notice that \eqref{Eq:familyS} is a decomposition into mutually singular measures, since if $1\leq j<k\leq N$, then $\mu_j(\gO_{j,N})=1$ while $\mu_k(\gO_{j,N})=0$. 
The following lemma is a key fact about the set $\cS$.
\begin{lemma}\label{lem:kfs}
If $\mu,\nu$ are two probability measures on $\gO_N$ such that $\mu\in\cS$ and $\mu\leq \nu$, then 
 \begin{align}\label{Eq:familyS2}
 \|\mu-\pi_{N,\alpha}\|_{TV}\leq \|\nu-\pi_{N,\alpha}\|_{TV}.
\end{align}
\end{lemma}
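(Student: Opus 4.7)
The plan is to exhibit, for each $\mu \in \cS$, an increasing Borel set $A \subseteq \gO_N$ that attains the total variation distance, in the sense that $\|\mu - \pi_{N,\alpha}\|_{TV} = \mu(A) - \pi_{N,\alpha}(A)$. Once such an $A$ is available, the conclusion will be immediate: the indicator $\ind_A$ is increasing, so the stochastic domination $\mu \leq \nu$ gives $\mu(A) \leq \nu(A)$, and therefore
\begin{equation*}
\|\mu - \pi_{N,\alpha}\|_{TV} = \mu(A) - \pi_{N,\alpha}(A) \leq \nu(A) - \pi_{N,\alpha}(A) \leq \|\nu - \pi_{N,\alpha}\|_{TV},
\end{equation*}
which is exactly \eqref{Eq:familyS2}.

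To build $A$, I will first observe that each $\gO_{k,N} = \{x : x_k = N\}$ is itself an increasing subset of $\gO_N$ in the coordinate order: if $x \in \gO_{k,N}$ and $y \geq x$, then $y_j \geq N$ for $j \geq k$, and the constraint $y_j \leq N$ forces $y_j = N$. Given the decomposition $\mu = \sum_{k=1}^N \gamma_k \mu_k$ with $\mu_k \in \cS_k$, each $\mu_k$ for $k < N$ is supported on $\gO_{k,N}$, which is a lower-dimensional, hence Lebesgue-null, subset of $\gO_N$. The Lebesgue decomposition of $\mu$ with respect to $\pi_{N,\alpha}$ therefore takes the form $\mu = \gamma_N \mu_N + \mu^s$, where the absolutely continuous part has density $\gamma_N \rho_N$ with $\rho_N := d\mu_N/d\pi_{N,\alpha}$ taken in its increasing version, while $\mu^s := \sum_{k<N}\gamma_k\mu_k$ is supported on $\gO_{N-1,N} = \cup_{k<N}\gO_{k,N}$. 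I will then set
\begin{equation*}
A := \gO_{N-1,N} \cup \{\gamma_N \rho_N > 1\},
\end{equation*}
which is a union of two increasing Borel sets and hence increasing.

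The verification $\mu(A) - \pi_{N,\alpha}(A) = \|\mu - \pi_{N,\alpha}\|_{TV}$ then reduces to a routine computation. Using $\mu_k(A)=1$ for $k<N$, the fact that $\mu_N\ll\pi_{N,\alpha}$ together with $\pi_{N,\alpha}(\gO_{N-1,N})=0$, and the normalization $\int \rho_N \, d\pi_{N,\alpha}=1$, one obtains
\begin{equation*}
\mu(A) - \pi_{N,\alpha}(A) = (1-\gamma_N) + \int (\gamma_N \rho_N - 1)_+ \, d\pi_{N,\alpha} = 1 - \int \min(\gamma_N \rho_N, 1) \, d\pi_{N,\alpha},
\end{equation*}
and the last quantity is precisely the standard formula for $\|\mu - \pi_{N,\alpha}\|_{TV}$ in the presence of a singular part of total mass $1-\gamma_N$. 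The only mildly delicate point is handling this singular component, which is essential because $\cS$ is designed to accommodate measures like $\delta_\wedge$ that are purely singular with respect to equilibrium; the structural fact that makes the proof go through without any complications is exactly that the supports $\gO_{k,N}$ of the singular pieces are themselves increasing subsets of $\gO_N$, and can hence be absorbed into the increasing set $A$.
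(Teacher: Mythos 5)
Your proposal is correct and follows essentially the same route as the paper: the paper also isolates the singular mass on $\gO_{N-1,N}$, shows the total variation distance is attained on the increasing set $B=\{\varphi_N\geq 1/\gamma_N\}\cup\gO_{N-1,N}$ (the same set as your $A$ up to a $\pi_{N,\alpha}$-null set), and then applies the stochastic domination $\mu\leq\nu$ to its indicator. No substantive difference to report.
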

\begin{proof}
Write $$\mu=\sum_{j=1}^N\gamma_j \mu_j=(1-\gamma_N)\mu' + \gamma_N \mu_N,$$ where $\mu'=(1-\gamma_N)^{-1}\sum_{j=1}^{N-1}\gamma_j\mu_j$ is singular w.r.t.\ $\pi_{N,\alpha}$ while $\mu_N$ is absolutely continuous w.r.t.\ $\pi_{N,\alpha}$, with an increasing density $\varphi_N$. We have
\begin{align*}
\|\mu-\pi_{N,\alpha}\|_{TV} &= \sup_{C\in \cB(\gO_{N-1,N}), A\in \cB(\gO_{N-1,N}^\cc)} \mu(A\cup C) - \pi_{N,\alpha}(A\cup C)\\
&=  \sup_{C\in \cB(\gO_{N-1,N}), A\in \cB(\gO_{N-1,N}^\cc)} (1-\gamma_N) \mu'(C) + \gamma_N \mu_N(A) - \pi_{N,\alpha}(A)\\
&= 1-\gamma_N + \sup_{ A\in \cB(\gO_{N-1,N}^\cc)} \gamma_N \mu_N(A) - \pi_{N,\alpha}(A)\;.
\end{align*}
We then define $A=\{x\in\gO_N: \,\varphi_N(x)\geq 1/\gamma_N\}$. It is easy to check that this event maximises the second term on the r.h.s.~of the last equation. Therefore, setting $B=A\cup\gO_{N-1,N}$, and observing that one has $\mu'(B)=\mu'(\gO_{N-1,N})=1$ and $\mu_N(B)=\mu_N(A)$, $\pi_{N,\alpha}(B)=\pi_{N,\alpha}(A)$, we deduce that
\begin{align*}
\|\mu-\pi_{N,\alpha}\|_{TV} &= \mu(B) - \pi_{N,\alpha}(B)\;.
\end{align*}
Since $A$ and $\gO_{N-1,N}$ are increasing, the set $B$ is also increasing. Therefore, $\mu(B)\leq \nu(B)$ and 
$$
\|\mu-\pi_{N,\alpha}\|_{TV} \leq \nu(B)-\pi_{N,\alpha}(B)\leq \|\nu-\pi_{N,\alpha}\|_{TV}.
$$
\end{proof} 
Let $Q_i:L^2(\gO_N,\pi_{N,\alpha})\mapsto L^2(\gO_N,\pi_{N,\alpha})$, $i=1,\dots,N-1$, denote the orthogonal projection onto functions that do not depend on the position of the $i$-th particle:
$$
Q_i f(x) = \pi_{N,\alpha}[f\,|\,x_j, j\neq i]\,.
$$
If $\mu$ is a probability on $\gO_N$, 
we write $\mu Q_i$ for the probability measure defined by
$$
\mu Q_i (f) = \int \mu(dx)Q_i f(x)\,.
$$
This is the distribution obtained from $\mu$ after one update at $i$.
\begin{lemma}\label{lem:muqi}
If $\mu  \in\cS$ then for any $i=1,\dots,N-1$
\begin{enumerate}[1)]
\item  $\mu Q_i\in\cS$;
\item  $\mu Q_i$ is stochastically lower than $\mu$.
\end{enumerate}
\end{lemma}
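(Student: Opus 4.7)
My plan is to reduce both claims to the case of a single extremal piece $\mu_k \in \cS_k$, and then analyse $\mu_k Q_i$ by distinguishing the three relative positions of $i$ and $k$. The reduction is immediate because stochastic domination is preserved under convex combinations, and because the forthcoming case analysis will show that each $\mu_k Q_i$ lies in $\cS_k$ or in $\cS_{k+1}$, so that pooling these images produces a decomposition of $\mu Q_i$ of the form required by \eqref{Eq:familyS} (the contributions in $\cS_{i+1}$ coming from the two adjacent indices $k=i$ and $k=i+1$ are grouped together as a convex combination, which remains in $\cS_{i+1}$). I therefore fix $\mu_k \in \cS_k$ with increasing density $\varphi_k$ with respect to $\pi_{k,\alpha}$, and handle the three cases.

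For $i > k$, every $x\in\gO_{k,N}$ satisfies $x_{i-1}=x_i=x_{i+1}=N$, so the resampling interval collapses to a point, $\mu_k Q_i = \mu_k$, and both conclusions are immediate. For $i < k$, I first check that $\pi_{k,\alpha}$ is reversible under $Q_i$ on $\gO_{k,N}$: by the Gamma--Beta conjugacy recalled in Remark \ref{rem:betarev}, the conditional law of $x_i$ given the remaining coordinates under $\pi_{k,\alpha}$ is again the Beta law on $[x_{i-1},x_{i+1}]$. By self-adjointness, $\mu_k Q_i$ has density $Q_i\varphi_k$ with respect to $\pi_{k,\alpha}$, and this is increasing because for each $u$ the map $x\mapsto x^{(i,u)}$ is coordinate-wise non-decreasing, so the integrand in the definition of $Q_i\varphi_k$ inherits monotonicity from $\varphi_k$. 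Stochastic domination follows from
\begin{equation*}
\int f\,d\mu_k - \int f\,d(\mu_k Q_i) \;=\; \int (f - Q_i f)\,\varphi_k\,d\pi_{k,\alpha},
\end{equation*}
together with a fiber-wise one-dimensional FKG argument: on each fiber $\{x_j:j\neq i\}$ both $f$ and $\varphi_k$ are non-decreasing in $x_i$, and since $Q_i f$ equals the Beta-average of $f$ on the fiber, the right-hand side above rewrites as the covariance of $f$ and $\varphi_k$ against this Beta law, which is non-negative.

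The genuinely new case, and the one I expect to be the main obstacle, is $i = k$. Since the resampled value lies strictly below $N$ almost surely, $\mu_k Q_k$ is supported on $\gO_{k+1,N}$ and the task is to show it belongs to $\cS_{k+1}$. The key computation is that of the Radon--Nikodym derivative $d(\mu_k Q_k)/d\pi_{k+1,\alpha}$: under both measures the conditional law of $x_k$ given $(x_1,\dots,x_{k-1})$ is the same $\Beta_\alpha[x_{k-1},N]$ law, so the derivative reduces to the ratio of the $(x_1,\dots,x_{k-1})$-marginal densities. Integrating the explicit product formula \eqref{Eq:pinalpha} over $x_k$ by means of a standard Beta integral, this marginal ratio turns out to be proportional to $(N-x_{k-1})^{-\alpha}$, so
\begin{equation*}
\frac{d(\mu_k Q_k)}{d\pi_{k+1,\alpha}}(x) \;\propto\; \varphi_k(x_1,\dots,x_{k-1})\,(N-x_{k-1})^{-\alpha}.
\end{equation*}
Under the standing assumption $\alpha \geq 1$ the factor $(N-x_{k-1})^{-\alpha}$ is non-decreasing in $x_{k-1}$, so its product with the increasing $\varphi_k$ is again increasing in $(x_1,\dots,x_{k-1})$, giving $\mu_k Q_k \in \cS_{k+1}$. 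Stochastic domination in this case is immediate from the one-step coupling: resampling $X_k$ according to the heat bath rule yields $X'_k \leq N = X_k$ while leaving all other coordinates fixed, so $X' \leq X$ pointwise.
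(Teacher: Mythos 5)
Your proof is correct and follows essentially the same route as the paper's: the same three-way case analysis of $i$ versus $k$, the same density computations giving $Q_i\varphi_k$ for $i<k$ and the factor $\varphi_k\,(N-x_{k-1})^{-\alpha}$ for $i=k$, and the same fiber-wise FKG argument for the stochastic domination (your conditional-covariance formulation and the one-step coupling at $i=k$ are cosmetic variants of the paper's pointwise inequality $(Q_i\varphi_k)(Q_ig)\le Q_i(\varphi_k g)$ and of its comparison of $g$ with $\bar g(x)=g(x_1,\dots,x_{k-1},N,\dots,N)$). The only nuance is that the monotonicity of $(N-x_{k-1})^{-\alpha}$ holds for every $\alpha>0$, not just $\alpha\ge 1$, which is what allows the paper to state the censoring inequality of Lemma \ref{cor:censor} for all $\alpha>0$; your appeal to $\alpha\ge1$ there is unnecessary (though harmless under the standing assumption).
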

\begin{proof}
Pick $\mu_k\in\cS_k$. If $i>k$, then $\mu_k Q_i=\mu_k$, since $x_k=N$ forces $x_j=N$ for all $j>k$.
If $i<k$, writing $\varphi_k = \frac{d\mu_k}{d\pi_{k,\alpha}}$, 
for any bounded measurable $f$ one has 
$$
\mu_k Q_i (f) = \pi_{k,\alpha}\left[\varphi_kQ_if\right]=\pi_{k,\alpha}\left[(Q_i\varphi_k) f\right],
$$
where we note that if $x_k=N$ and  $i<k$ then $Q_if(x)=\pi_{k,\alpha}[f\,|\,x_j, j\neq i]$, and therefore $Q_i$ is self-adjoint in $L^2(\gO_{k,N},\pi_{k,\alpha})$. Thus, $\mu_k Q_i$ has density $Q_i\varphi_k$ w.r.t.\ $\pi_{k,\alpha}$. Recall the notation $x^{(i,u)}$ for the configuration $x$ updated at the $i$-th particle. For any $x,y\in\gO_{k,N}$ with $x\leq y$, using $x^{(i,u)}\leq y^{(i,u)}$ for all $u\in[0,1]$, it follows that 
\begin{align*}
Q_i\varphi_k(x) &= \int_0^1 \varphi_k\left(x^{(i,u)}\right)\rho_\ga(u)du\\&\leq \int_0^1 \varphi_k\left(y^{(i,u)}\right)\rho_\ga(u)du=Q_i\varphi_k(y)\,.
\end{align*}
In other words $Q_i\varphi_k$ is increasing, and $\mu_k Q_i\in \cS_k$ if $i<k$.
When $i=k$,  observe that if $x_{k+1}=N$, then $ Q_k (f) = \pi_{k+1,\alpha}\left[f\,|\,x_j,j<k\right]$. Therefore, if $\psi_{k,\alpha}$ denotes the density of the marginal of $\pi_{k,\alpha}$ on $(x_1,\dots,x_{k-1})$ w.r.t.\ the marginal of $\pi_{k+1,\alpha}$ on the same variables, 
\begin{align*}
\mu_k Q_k (f) &= \pi_{k,\alpha}\left[\varphi_k\pi_{k+1,\alpha}\left[f\,|\,x_j,j<k\right]\right] \\&
= \pi_{k+1,\alpha}\left[\psi_{k,\alpha}\varphi_k\pi_{k+1,\alpha}\left[f\,|\,x_j,j<k\right]\right]\\&
= \pi_{k+1,\alpha}\left[\pi_{k+1,\alpha}\left[\psi_{k,\alpha}\varphi_k\,|\,x_j,j<k\right] f\right]\\&
= \pi_{k+1,\alpha}\left[\psi_{k,\alpha}\varphi_kf\right].
\end{align*}
This shows that $\mu_k Q_k$ is supported on $\gO_{k+1,N}$ and has density $\psi_{k,\alpha}\varphi_k$ w.r.t.\ $  \pi_{k+1,\alpha}$. A direct computation shows that 
$$
\psi_{k,\alpha} = C_{\ga,k,N}(N-x_{k-1})^{-\ga}\,,
$$
for some positive constant $C_{\ga,k,N}$. In particular, $\psi_{k,\alpha}$ is increasing for any $\ga>0$. It follows that if $\mu_k\in\cS_k$, then $\mu_kQ_k\in\cS_{k+1}$, for all $k=1,\dots,N-1$.
Taking a generic $\mu\in\cS$, by linearity the above implies that $\mu Q_i\in\cS$ for any $ i=1,\dots,N-1$.

To prove the stochastic domination $\mu Q_i\leq \mu$, for $\mu\in\cS$, it is sufficient to show that $\mu_k Q_i\leq \mu_k$, for $\mu_k\in\cS_k$, for all $k,i$. 
Pick $\mu_k\in\cS_k$ for some $k=1,\dots,N$ and an increasing function $g$ on $\gO_N$. We are going to show that $\mu_k Q_i (g)\leq \mu_k (g)$ for any $i=1,\dots,N-1$. If $i>k$ then $\mu_kQ_i=\mu_k$ and there is nothing to prove. If $i<k$, as above we may write
$$
\mu_k Q_i (g) = \pi_{k,\alpha}\left[(Q_i\varphi_k) g\right] = \pi_{k,\alpha}\left[(Q_i\varphi_k) (Q_ig)\right].
$$ 
Since $\varphi_k$ is also increasing, the FKG inequality on $\bbR$, which is valid for any probability measure, implies that $(Q_i\varphi_k) (Q_ig)\leq Q_i(\varphi_k g)$ pointwise. Therefore
$$
\mu_k Q_i (g) \leq \pi_{k,\alpha}\left[Q_i(\varphi_k g)\right] = \pi_{k,\alpha}\left[\varphi_k g\right] = \mu_k (g).
$$ 
Finally, if $i=k$, then as before we have
$$
\mu_k Q_k (g) = \pi_{k+1,\alpha}\left[\psi_{k,\alpha}\varphi_kg\right].
$$
On the other hand, defining the function $\bar g(x):=g(x_1,\dots,x_{k-1},N,\dots,N)$, one has
$$
\mu_k (g) = \pi_{k+1,\alpha}\left[\psi_{k,\alpha}\varphi_k \bar g\right].
$$
Thus, the conclusion $\mu_k Q_k (g) \leq \mu_k (g)$ follows from the fact that $g\leq \bar g$. 
\end{proof}
In the next lemma we consider the effect of a sequence of updates on a measure $\mu\in\cS$ and compare it with the effect of another sequence obtained from the first by removing some of the updates.  

\begin{lemma}\label{lem:muscheme}
Pick $n\in\bbN$ and fix a sequence $z:=(z_1,\dots,z_n)\in\lint1,N-1\rint^n$.
For any $\mu\in\cS$, if $\mu^z$ denotes the new measure 
\begin{align}\label{Eq:muz}
\mu^z=\mu\, Q_{z_1}\cdots Q_{z_n}\,,
\end{align}
then $\mu^z\in\cS$. Moreover, if $z'$ denotes a sequence obtained from $z$ by removing some of  the entries, then $\mu^z\leq \mu^{z'}$ and
\begin{align}\label{Eq:sched}
\|\mu^z-\pi_{N,\alpha}\|_{TV}\leq \|\mu^{z'}-\pi_{N,\alpha}\|_{TV}
\end{align}
\end{lemma}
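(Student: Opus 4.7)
The plan is to establish the three claims by induction on $n$, using Lemma \ref{lem:muqi} as the engine and Lemma \ref{lem:kfs} as the final step. Let us state explicitly a monotonicity property of the one-step operators $Q_i$ which is the missing ingredient: if $\mu\leq \nu$ are two probability measures on $\Omega_N$, then $\mu Q_i\leq \nu Q_i$ for every $i\in\lint 1,N-1\rint$. Indeed, it was observed in the proof of Lemma \ref{lem:muqi} that $Q_i$ sends increasing functions to increasing functions (because $x\leq y$ implies $x^{(i,u)}\leq y^{(i,u)}$ pointwise in $u$); hence for any increasing $f$ one has $\mu Q_i(f)=\mu(Q_if)\leq \nu(Q_if)=\nu Q_i(f)$.

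The first claim, namely $\mu^z\in\cS$, follows by a direct induction on $n$: the base case is the hypothesis $\mu\in\cS$, and each inductive step is provided by item $1)$ of Lemma \ref{lem:muqi}. For the stochastic domination, it is enough to treat the case where $z'$ is obtained from $z$ by deleting a single index, say $z_k$, since the general case then follows by iterating this elementary reduction. Write $\mu_{k-1}:=\mu\, Q_{z_1}\cdots Q_{z_{k-1}}$, which lies in $\cS$ by the first claim. Item $2)$ of Lemma \ref{lem:muqi} gives $\mu_{k-1}Q_{z_k}\leq \mu_{k-1}$; applying the monotonicity property of $Q_{z_{k+1}},\dots,Q_{z_n}$ recalled above, this inequality is preserved by right-multiplication by $Q_{z_{k+1}}\cdots Q_{z_n}$, which yields $\mu^z\leq \mu^{z'}$.

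The inequality \eqref{Eq:sched} is then immediate from Lemma \ref{lem:kfs} applied to $\mu^z\in\cS$ and $\nu=\mu^{z'}$, since we have just shown that $\mu^z\leq \mu^{z'}$.

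There is essentially no genuine obstacle: the only point which deserves to be isolated is the monotonicity of $\mu\mapsto\mu Q_i$ with respect to stochastic ordering, which is an elementary consequence of the fact that $Q_i$ preserves increasing functions (equivalently, of the grand monotone coupling of Proposition \ref{grandgradient} restricted to a single update). Everything else is a formal combination of Lemmas \ref{lem:muqi} and \ref{lem:kfs}.
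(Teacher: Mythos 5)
Your proof is correct and follows essentially the same route as the paper: reduce to the removal of a single update, apply Lemma \ref{lem:muqi} to get the one-step domination, propagate it through the remaining updates by the order-preservation of each $Q_i$ (the paper cites Proposition \ref{grandgradient} for this, you derive it from $Q_i$ preserving increasing functions — the same fact), and conclude with Lemma \ref{lem:kfs}. Making the monotonicity of $\mu\mapsto\mu Q_i$ explicit is a nice touch, but it is not a different argument.
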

\begin{proof}
Lemma \ref{lem:muqi} shows that $\mu^z\in \cS$ for any $\mu\in\cS$ and any sequence $z$. For the second part of the lemma, by a telescoping argument it is sufficient to consider the case where $z$ and $z'$ differ by the removal of a single update, say $z_j$, so that 
$$
z=(z_1,\dots,z_{j-1},z_j,z_{j+1},\dots,z_n)\,,\quad z'=(z_1,\dots,z_{j-1},z_{j+1},\dots,z_n)
$$
Let $\mu_1 = \mu Q_{z_1}\cdots Q_{z_{j}}$, and $\mu_2 = \mu Q_{z_1}\cdots Q_{z_{j-1}}$. Then $\mu_1=\mu_2Q_{z_{j}}$ and thus, by Lemma \ref{lem:muqi} one has  $\mu_1\leq \mu_2$.
Moreover,  
$$
\mu^z = \mu_1Q_{z_{j+1}}\cdots Q_{z_{n}} \leq \mu_2Q_{z_{j+1}}\cdots Q_{z_{n}} = \mu^{z'}\,, 
$$ 
where the inequality follows from the fact that each update preserves the monotonicity, see Proposition \ref{grandgradient}. 
The conclusion follows from Lemma \ref{lem:kfs}.
\end{proof}

We can now state and prove the censoring inequality in our setup. A {\em censoring scheme} $\cC$ is defined as a c\`adl\`ag map
$$
\cC:[0,\infty)\mapsto \cP(\{1,\dots,N-1\}),
$$
where $\cP(A)$ denotes the set of all subset of a set $A$. The subset $\cC(s)$, at any time $s\geq 0$, represents the set of particles whose update is to be suppressed at that time. More precisely, given a censoring scheme $\cC$, and an initial condition $x\in\gO_N$, we write $P_{t,\cC}^x$ for the law of the random variable obtained by starting at $x$ and applying the standard graphical construction (see Proposition \ref{grandgradient}) with the proviso
that if the particle with label $j$ rings at time $s$, then the update is performed if and only if $j\notin\cC(s)$.  In particular, the uncensored evolution $P^x_t$ corresponds to $P^x_{t,\cC}$ when $\cC(s)\equiv\eset$. Given a distribution $\mu$ on $\gO_N$, we write $$\mu P_{t,\cC}=\int  P^x_{t,\cC}\,\mu(dx).$$ 
  
\begin{lemma}\label{cor:censor}
For any $\ga>0$, if $\mu\in\cS$, and $\cC$ is a censoring scheme, then for all $t\geq 0$:
\begin{enumerate}[1)]
\item $\mu P_t\in \cS$ and $\mu P_{t,\cC}\in \cS$,
\item  $\mu P_{t,\cC}$ is stochastically higher than $\mu P_t$.
\end{enumerate}
Moreover, for all $t\geq 0$:  
\begin{equation}\label{Eq:censura}
\|\mu P_t-\pi_{N,\alpha}\|_{TV}\leq \|\mu P_{t,\cC}-\pi_{N,\alpha}\|_{TV}.
\end{equation}
\end{lemma}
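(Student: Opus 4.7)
The plan is to reduce this continuous-time statement to the discrete update-schedule statement already established in Lemma \ref{lem:muscheme}, via a conditioning argument on the Poisson clocks of the graphical construction from Proposition \ref{grandgradient}. To each site $j\in\lint 1,N-1\rint$ associate an independent Poisson process of rate one on $[0,t]$; let $\cN$ denote the joint law of these clocks. For a realisation $\omega$, list the update times in $[0,t]$ in increasing order $t_1(\omega)<\dots<t_{n(\omega)}(\omega)$ with labels $z(\omega):=(z_1(\omega),\dots,z_{n(\omega)}(\omega))\in\lint 1,N-1\rint^{n(\omega)}$. Since the censoring scheme $\cC$ is deterministic, the updates effectively performed by the censored dynamics form a (deterministic, $\omega$-measurable) sub-sequence $z'(\omega)$ of $z(\omega)$, obtained by discarding the indices $i$ with $z_i(\omega)\in\cC(t_i(\omega))$.

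With this notation, and using the operator notation of \eqref{Eq:muz}, the conditional distributions at time $t$ are $\mu^{z(\omega)}$ for the uncensored dynamics and $\mu^{z'(\omega)}$ for the censored one, and since $n(\omega)<\infty$ almost surely this conditioning is well defined. Integrating,
$$ \mu P_t \;=\; \int \mu^{z(\omega)}\, d\cN(\omega), \qquad \mu P_{t,\cC} \;=\; \int \mu^{z'(\omega)}\, d\cN(\omega). $$
Lemma \ref{lem:muscheme} applied pointwise in $\omega$ gives simultaneously $\mu^{z(\omega)}\in\cS$, $\mu^{z'(\omega)}\in\cS$ and $\mu^{z(\omega)}\leq \mu^{z'(\omega)}$. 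To derive parts 1) and 2) of the lemma I then need to check that the family $\cS$ is stable under mixtures. This reduces to convexity of each class $\cS_k$: a non-negative linear combination of measures with increasing density w.r.t.\ $\pi_{k,\alpha}$ still has an increasing density, and since the components $\mu_k$ entering \eqref{Eq:familyS} are supported on the disjoint level sets $\gO_{k,N}\setminus\gO_{k-1,N}$, the decomposition \eqref{Eq:familyS} of any mixture is obtained by splitting each $\mu^{z(\omega)}$ along these sets and integrating the corresponding components separately. Stochastic domination survives integration against a common probability measure $\cN$, whence $\mu P_t \leq \mu P_{t,\cC}$.

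The censoring inequality \eqref{Eq:censura} is then an immediate application of Lemma \ref{lem:kfs} to the pair $(\mu P_t,\mu P_{t,\cC})$, the hypotheses of which are exactly the two facts just established: $\mu P_t\in\cS$ and $\mu P_t\leq \mu P_{t,\cC}$. The only point that requires some care in this scheme is the passage from pointwise-in-$\omega$ properties to their integrated counterparts, which hinges on the closure of $\cS$ under mixtures; this is a direct consequence of the convexity of each $\cS_k$, so I do not expect a genuine obstacle.
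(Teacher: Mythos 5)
Your argument is correct and follows essentially the same route as the paper: condition on the realisation of the Poisson clocks, identify the conditional laws of the uncensored and censored evolutions with $\mu^{z}$ and $\mu^{z'}$, invoke Lemma \ref{lem:muscheme}, integrate, and conclude \eqref{Eq:censura} via Lemma \ref{lem:kfs}. The only difference is that you spell out the closure of $\cS$ under mixtures, a point the paper passes over silently when "taking the expectation over $\cT_t$", and your justification of it (convexity of each $\cS_k$ plus mutual singularity of the components) is sound.
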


\begin{proof}
It is sufficient to prove 1) and 2) above, since the conclusion \eqref{Eq:censura} is then a consequence of Lemma \ref{lem:kfs}. To prove 1) and 2) note that by conditioning on the realization $\cT_t$ of the Poisson clocks $\cT^{(j)}$, $j\in\lint 1,N-1\rint $ up to time $t$ in the graphical construction, one has that  
the uncensored and the censored evolution are measures of the form $\mu^z$ and $\mu^{z'}$ respectively; see \eqref{Eq:muz}. By Lemma \ref{lem:muscheme} one has $\mu^z,\mu^{z'}\in\cS$ and $\mu^{z}\leq \mu^{z'}$. Taking the expectation over $\cT_t$ shows that $\mu P_t\in \cS$ and $\mu P_{t,\cC}\in \cS$, and that $\mu P_t\leq \mu P_{t,\cC}$. 

 \end{proof}

\subsection{Relaxation of the special particles}\label{sec:special}
Here we show that special particles have reached equilibrium by time $t_{\delta/2}$; see Proposition \ref{prop:special}.
The key to this result will be Proposition \ref{prop:muW} below.
Recall the notation $\cS_N$ introduced in Section \ref{sec:censor} for the set of probability measures on $\gO_N$ with an  increasing density w.r.t.\ $\pi_{N,\alpha}$. Given a probability $\mu$ on $\gO_N$, we write $\bar \mu$ for the marginal of $\mu$ on the special particle positions $y:=(y_1,\dots,y_{K-1})$. 

\begin{lemma}
\label{lem:incmarg}
If $\mu\in\cS_N$ then $\bar \mu$ is absolutely continuous w.r.t.\ $\bar \pi_{N,\alpha}$ and the corresponding density is increasing on $\bbR^{K-1}$.
\end{lemma}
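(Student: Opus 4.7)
The plan is to reduce the statement to a stochastic monotonicity property of the conditional measure $\mu_y := \pi_{N,\alpha}(\cdot \mid x_{u_i} = y_i,\, i = 1,\ldots,K-1)$, and then to establish that monotonicity via a block-wise scaling coupling.

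First I would write $\varphi := d\mu/d\pi_{N,\alpha}$, which is increasing by hypothesis. Disintegrating $\pi_{N,\alpha}$ along the projection onto the special coordinates $(x_{u_1},\ldots,x_{u_{K-1}})$ shows that $\bar\mu$ is absolutely continuous with respect to $\bar\pi_{N,\alpha}$, with density $\bar\varphi(y) = \mu_y(\varphi)$. Hence it suffices to prove that for every $y \leq y'$ in the coordinate order \eqref{coord} one has $\mu_y(\varphi) \leq \mu_{y'}(\varphi)$, and this would follow from a monotone coupling $(X,X')$ of $\mu_y$ and $\mu_{y'}$ with $X \leq X'$ almost surely, combined with monotonicity of $\varphi$.

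To build such a coupling, I would exploit the product form $\prod_k (x_k - x_{k-1})^{\alpha - 1}$ of the density of $\pi_{N,\alpha}$: conditionally on $y$, the positions in distinct blocks $\lint u_{i-1}+1, u_i - 1 \rint$ (with the conventions $u_0 = 0$, $u_K = N$, $y_0 = 0$, $y_K = N$) are independent, so it is enough to couple the block laws separately. Within a block with endpoints $(a, b)$, the conditional density is proportional to $\prod_{k=u_{i-1}}^{u_i - 1}(x_{k+1} - x_k)^{\alpha - 1}$ on the obvious simplex. Rescaling $z_k := (x_k - a)/(b - a)$ produces a law for $(z_{u_{i-1}+1},\ldots,z_{u_i-1})$ that is independent of $(a,b)$. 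Coupling the block laws associated to $(a,b)$ and $(a',b')$ with $a \leq a'$, $b \leq b'$ by using the same $z_k$'s then yields
$$
x_k' - x_k = (a' - a)(1 - z_k) + (b' - b)\,z_k \geq 0,
$$
since $z_k \in [0,1]$. Concatenating these couplings over all blocks produces the desired global monotone coupling.

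I do not foresee any serious obstacle: the Dirichlet-type scaling invariance within each block is the crucial observation, and the extremal blocks ($i=1$ and $i=K$) are handled automatically since $y_0' - y_0 = 0$, respectively $y_K' - y_K = 0$, makes one of the two nonnegative terms in the display above vanish. Note incidentally that the argument does not use $\alpha \geq 1$; the log-concavity required elsewhere in the paper plays no role in this lemma.
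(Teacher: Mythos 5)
Your proof is correct, and it takes a genuinely different route to the key step than the paper does. Both arguments begin identically: disintegrate to get $\bar\varphi(y)=\pi_{N,\alpha}(\varphi\,|\,y)$ and reduce the lemma to the stochastic domination $\pi_{N,\alpha}(\cdot\,|\,y)\leq\pi_{N,\alpha}(\cdot\,|\,y')$ for $y\leq y'$, combined with monotonicity of $\varphi$. The paper then proves this domination \emph{dynamically}: it runs the censored dynamics (all updates of the special particles suppressed) from the highest configuration compatible with the higher boundary data and the lowest configuration compatible with the lower one, invokes the order-preserving graphical construction of Proposition \ref{grandgradient}, and lets $t\to\infty$ so that the two laws converge weakly to the respective conditional equilibria. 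You instead build a \emph{static, explicit} monotone coupling: conditional independence of the blocks under $\pi_{N,\alpha}(\cdot\,|\,y)$ plus the Dirichlet-type scaling invariance within each block lets you use the same rescaled variables $z_k$ for both boundary data, and the identity $x_k'-x_k=(a'-a)(1-z_k)+(b'-b)z_k\geq 0$ gives the pointwise ordering block by block. What each buys: your construction is self-contained and avoids the (implicit in the paper) appeal to convergence of the censored dynamics to its conditional equilibrium, at the price of relying on the exact product/scaling structure of $\pi_{N,\alpha}$; the paper's argument is softer and would carry over to other monotone heat-bath dynamics where no such scaling is available. Your closing remark is also accurate, and applies to both proofs: neither uses $\alpha\geq 1$, since Proposition \ref{grandgradient} holds for all $\alpha>0$.
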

\begin{proof}
Let $\varphi = d\mu/d\pi_{N,\alpha}$. The density of $\bar \mu$ w.r.t.\ $\bar\pi_{N,\alpha}$ is given by the conditional expectation
$$
\bar\varphi(y) = \pi_{N,\alpha}(\varphi |y).
$$
To prove that it is increasing, we have to show that $\pi_{N,\alpha}(\varphi |y)\geq \pi_{N,\alpha}(\varphi |y')$ whenever $y\geq y'$. The latter domination can be seen as follows. Let $x$ be the highest configuration of particle positions such that $y_i=x_{u_i}$, $i=1,\dots,K-1$, and let $x'$   be the lowest configuration of particle positions such that $y'_i=x'_{u_i}$, $i=1,\dots,K-1$. Clearly, $x\geq x'$. Then use $(x,x')$ as initial conditions in the graphical construction (Proposition \ref{grandgradient}) for the censored dynamics where all updates of the special particles are suppressed.
As time goes to infinity the two distributions  converge weakly to  $\pi_{N,\alpha}(\cdot |y), \pi_{N,\alpha}(\cdot |y')$ respectively. Since $x\geq x'$ and the graphical construction preserves the order, this shows that $  \pi_{N,\alpha}(\cdot |y)\geq \pi_{N,\alpha}(\cdot |y')$.
\end{proof}
We  use the following notation for the centered height of the special particles:
$$
w_i = x_{u_i} - u_i\,,\quad W= \sum_{i=1}^{K-1}w_i,
$$
and write $\mu(W)= \bar \mu(W)$ for the expected value of $W$. Note that at equilibrium, for $N$ large, the vector $(w_1,\dots,w_{K-1})$ behaves roughly as a normal vector, and the fluctuations of $W$ are of order $\sqrt N$ for each fixed $K$.   
The results below are valid for all $\ga\geq 1$. 

\begin{proposition}\label{prop:muW}
For any $\gep>0$, $K\in\bbN$, there exists $\eta=\eta(K,\gep)>0$ such that for all $N\geq 2$, $\mu\in\cS_N$ one has:
\begin{equation}\label{Eq:contr1}
\mu(W)\leq\eta \sqrt N\;\;\;\Rightarrow\;\;\;\|\bar \mu -\bar \pi_{N,\alpha}\|_{TV}\leq \gep.
\end{equation}
\end{proposition}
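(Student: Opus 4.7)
The plan is to argue by contradiction, building on the CLT for the marginal of $\bar\pi_{N,\alpha}$ on the special particles together with the strict FKG inequality. Suppose the claim fails: for some $\gep>0$ and $K\ge 2$ there exist $N_n\to\infty$ and $\mu_n\in\cS_{N_n}$ with $\bar\mu_n(W)\le\eta_n\sqrt{N_n}$, $\eta_n\to 0$, yet $\|\bar\mu_n-\bar\pi_{N_n,\alpha}\|_{TV}\ge\gep$. By Lemma \ref{lem:incmarg} the marginal density $\bar\varphi_n:=d\bar\mu_n/d\bar\pi_{N_n,\alpha}$ is coordinate-wise increasing; hence FKG gives $\bar\mu_n(f)\ge\bar\pi_{N_n,\alpha}(f)$ for every increasing $f$, and in particular $0\le\bar\mu_n(w_i)\le\bar\mu_n(W)\le\eta_n\sqrt{N_n}$ for every $i$.

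Rescaling via $z_i:=(y_i-u_i)/\sqrt{N_n}$, Lemma \ref{Lemma:AbsCont} combined with the multivariate CLT for partial sums of i.i.d.\ $\Gamma(\alpha,\alpha)$ increments yields that the pushforward $\tilde\pi_n$ converges weakly to $\pi_\infty:=\mathcal{N}(0,\Sigma)$, with $\Sigma$ non-degenerate and having strictly positive entries. Markov's inequality controls the upper tails of $\tilde\mu_n$ while stochastic domination controls the lower tails; these give tightness of $\tilde\mu_n$ as well as uniform integrability of $L(z):=\sum_i z_i$ under $\tilde\mu_n$. Extracting $\tilde\mu_n\to\mu_\infty$ along a subsequence, passing $\tilde\mu_n(f)\ge\tilde\pi_n(f)$ to the limit for every bounded increasing continuous $f$ shows that $\mu_\infty$ stochastically dominates $\pi_\infty$, while uniform integrability together with $\bar\mu_n(W)/\sqrt{N_n}\to 0$ gives $\mu_\infty(L)=\pi_\infty(L)=0$. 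By Strassen's theorem there is a monotone coupling $(Y,X)$ of $\mu_\infty,\pi_\infty$ with $Y\ge X$ coordinate-wise a.s.; then $\bbE[L(Y)-L(X)]=0$ and $L(Y)\ge L(X)$ pointwise, so since $L$ is strictly increasing in each coordinate we get $Y=X$ a.s., and therefore $\mu_\infty=\pi_\infty$.

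It remains to convert this weak statement into the TV contradiction. Total variation being invariant under the rescaling, it suffices to show $\|\tilde\mu_n-\tilde\pi_n\|_{TV}\to 0$. A local CLT for gamma partial sums, again via Lemma \ref{Lemma:AbsCont}, gives $\tilde\pi_n\to\pi_\infty$ in TV as measures on $\bbR^{K-1}$, reducing the task to proving $\tilde\mu_n\to\pi_\infty$ in TV. I would exploit the coordinate-wise monotonicity of $\bar\varphi_n$: on any small rectangle the Lebesgue density of $\tilde\mu_n$ equals $\bar\varphi_n$ times the locally nearly constant density $d\tilde\pi_n/d\mathrm{Leb}$, so monotonicity sandwiches the Lebesgue density of $\tilde\mu_n$ between neighbouring grid values; the weak convergence $\tilde\mu_n\to\pi_\infty$ then forces a.e.\ pointwise convergence of Lebesgue densities, whence Scheff\'e's lemma upgrades this to TV convergence. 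I expect this last step---the upgrade from weak to TV---to be the principal technical obstacle, the key ingredient being that the monotone likelihood ratio structure, combined with the smoothness of the reference measure coming from the local CLT, rules out pathological concentration of the density ratios.
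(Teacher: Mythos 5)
Your route is genuinely different from the paper's. The paper argues directly and quantitatively: with $A_i=\{w_i\ge\gl\sqrt N\}$, $A=\cap_i A_i$, $B=\cup_i A_i$, it shows via FKG applied to $\pi_{N,\alpha}(\cdot\,|\,A_i)$ and $\pi_{N,\alpha}(\cdot\,|\,A_i^\cc)$ that $\mu(A)\ge(1+a)\pi_{N,\alpha}(A)$ forces $\mu(W)\ge a\gl\sqrt N\,\pi_{N,\alpha}(A)$, and via Lemma \ref{lem:piapib} together with the increasing marginal density (Lemma \ref{lem:incmarg}) that $\mu(A)\le(1+a)\pi_{N,\alpha}(A)$ forces $\|\bar\mu-\bar\pi_{N,\alpha}\|_{TV}\le a+\pi_{N,\alpha}(B)$; uniform-in-$N$ two-sided bounds on $\pi_{N,\alpha}(A_i)$ then give an explicit $\eta=\gep\gl\gd_1/4$ valid for every $N\ge2$. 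Your argument is qualitative (contradiction, compactness, Strassen, local CLT, Scheff\'e), and its core is sound: for the increasing density $g_n=d\tilde\mu_n/d\tilde\pi_n$ the rectangle sandwich $g_n(a)\,\tilde\pi_n(R)\le\tilde\mu_n(R)\le g_n(b)\,\tilde\pi_n(R)$ gives, once the weak limit of $\tilde\mu_n$ is identified as $\pi_\infty$, the pointwise convergence $g_n\to1$ directly (take $R$ with lower, respectively upper, corner at the point in question), and Scheff\'e then upgrades to total variation; so the step you single out as the main obstacle does go through, and more easily than you fear. What you lose relative to the paper is uniformity in $N$ and an explicit $\eta$; what you gain is that no choice of events $A_i$ and no equilibrium estimates beyond the (local) CLT are needed.

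Two concrete loose ends need repair. First, the negation of the statement only provides, for each $\eta=1/n$, some $N_n\ge2$ and $\mu_n\in\cS_{N_n}$; you assumed $N_n\to\infty$. If $N_n$ stays bounded you must run a separate (easier) fixed-$N$ argument: $\gO_{N_0}$ is compact, so tightness and integrability are free and no CLT is required, and the same domination-plus-mean-constraint and sandwich/Scheff\'e steps identify the limit as $\bar\pi_{N_0,\alpha}$ and upgrade to TV. Without this, your proof only yields the claim for $N$ large, which happens to suffice for Proposition \ref{prop:special} but is not the statement as written. Second, uniform integrability of $L$ under $\tilde\mu_n$ does not follow from Markov's inequality plus a bounded first moment of $L^+$; fortunately you do not need it. It is enough to show $\mu_\infty(L)\le 0$: by truncation and Fatou, $\mu_\infty(L^+)\le\liminf_n\tilde\mu_n(L^+)\le\liminf_n\bigl(\eta_n+\tilde\mu_n(L^-)\bigr)$, while $L^-$ is a decreasing function, so stochastic domination by $\tilde\pi_n$ (whose second moments are uniformly bounded) gives uniform integrability of $L^-$ and $\tilde\mu_n(L^-)\to\mu_\infty(L^-)$; combined with the stochastic domination of $\pi_\infty$ by $\mu_\infty$ this yields $\mu_\infty(L)=0$, after which your Strassen argument is correct. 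Finally, note that Lemma \ref{Lemma:AbsCont} is only a one-sided domination bound, so for the CLT and local CLT for $\bar\pi_{N,\alpha}$ you should instead write the marginal density of the special particles as a ratio of Gamma densities and invoke the local limit theorem directly, as is done in the proof of that lemma.
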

\begin{proof}
We follow \cite[Section 5]{Lac16}, where a similar statement was proved in the context of random permutations. Given a constant $\gl>0$, define the events
\begin{align*}
A_i = \left\{x:w_i\geq \gl\sqrt N\right\}\,,\qquad A = \bigcap_{i=1}^{K-1}A_i\,,\qquad B = \bigcup_{i=1}^{K-1}A_i\,. 
\end{align*}
Let us first show that for any $\mu\in\cS_N$, $a>0$
\begin{equation}\label{Eq:contr2}
\mu(A)\geq (1+a) \pi_{N,\alpha}(A)\;\;\;\Rightarrow\;\;\;\mu(W)\geq a\gl\sqrt N\pi_{N,\alpha}(A).
\end{equation}
Let $\varphi$ denote the density $d\mu/d\pi_{N,\alpha}$. The sets $A_i$ and $A_i^\cc$ are stable under the operations $\wedge$ and $\vee$ introduced in \eqref{minimax}, and the  FKG inequality applied to $\pi_{N,\alpha}(\cdot|A_i)$ shows that 
\begin{align*}
\mu(w_i; A_i) &:= \mu(w_i \ind_{A_i})\\
&= \pi_{N,\alpha}(A_i)\pi_{N,\alpha}(\varphi w_i| A_i) \\&\geq \pi_{N,\alpha}(\varphi ; A_i) \pi_{N,\alpha}(w_i| A_i) = \mu(A_i) \pi_{N,\alpha}(w_i| A_i).
\end{align*}
Similarly, the FKG inequality for $\pi_{N,\alpha}(\cdot|A_i^\cc)$ shows that $$\mu(w_i; A^\cc_i)\geq \mu(A^\cc_i) \pi_{N,\alpha}(w_i| A^\cc_i).$$
Therefore, using $\pi_{N,\alpha}(w_i)=0$:
\begin{gather*}
\mu(w_i) \geq (\mu(A_i)- \pi_{N,\alpha}(A_i)) \pi_{N,\alpha}(w_i| A_i)+(\mu(A^\cc_i)- \pi_{N,\alpha}(A^\cc_i)) \pi_{N,\alpha}(w_i| A^\cc_i).
\end{gather*}
Since $A_i$ and $w_i$ are increasing and since $\pi_{N,\alpha}(w_i) = 0$, one has $(\mu(A^\cc_i)- \pi_{N,\alpha}(A^\cc_i)) \pi_{N,\alpha}(w_i| A^\cc_i)\geq 0$ and therefore
\begin{equation}\label{Eq:contr3}
\mu(w_i) \geq (\mu(A_i)- \pi_{N,\alpha}(A_i)) \pi_{N,\alpha}(w_i| A_i)\geq (\mu(A_i)- \pi_{N,\alpha}(A_i))\gl\sqrt N.
\end{equation}
Consider the function $$\psi = \left(\sum_{i=1}^{K-1} {\bf 1}_{A_i}\right) - {\bf 1}_{A}.$$
Then $\psi$ is increasing, and applying FKG we obtain
\begin{align*}
\sum_{i=1}^{K-1}(\mu(A_i)- \pi_{N,\alpha}(A_i)) &= \mu(A)- \pi_{N,\alpha}(A) + \pi_{N,\alpha}\left((\varphi-1)\psi \right) 
\\&\geq \mu(A)- \pi_{N,\alpha}(A).
\end{align*}
Summing in \eqref{Eq:contr3}, 
\begin{gather*}
\mu(W) \geq  (\mu(A)- \pi_{N,\alpha}(A))\gl\sqrt N.
\end{gather*}
This proves \eqref{Eq:contr2}.
Next, let us show that for any $\mu\in\cS_N$, $a>0$
\begin{equation}\label{Eq:contr4}
\mu(A)\leq (1+a) \pi_{N,\alpha}(A)\;\;\;\Rightarrow\;\;\;\|\bar\mu-\bar \pi_{N,\alpha}\|_{TV}\leq a+\pi_{N,\alpha}(B).
\end{equation}
The sets $A,B$ satisfy the assumptions of  Lemma \ref{lem:piapib}. Therefore, 
$$
\frac{\mu(A)}{\pi_{N,\alpha}(A)}=\pi_{N,\alpha}(\varphi | A) \geq \pi_{N,\alpha}(\varphi | B)=\frac{\mu(B)}{\pi_{N,\alpha}(B)} .
$$
If $\mu(A)\leq (1+a) \pi_{N,\alpha}(A)$, then 
\begin{equation}\label{Eq:contr5}
\mu(B)\leq (1+a) \pi_{N,\alpha}(B).
\end{equation}
From Lemma \ref{lem:incmarg} we know that $\bar\varphi=d\bar\mu/d\bar \pi_{N,\alpha}$ is increasing. If $x,x'$ are two particle configurations such that $x'\in B^\cc$ and $x\in A$, then $y_i=x_{u_i}\geq x'_{u_i}=y'_i$, for all $i=1,\dots,K-1$.
Since $A,B$ are measurable w.r.t.\ the $y$ variables, we write $\bar A,\bar B$ for the corresponding subsets of $\bbR^{K-1}$, so that $x\in A \iff y\in \bar A$ and $x\in B\iff y\in \bar B$. Thus, we have
$$\bar\varphi(y')\leq \bar\varphi(y)\,,\qquad  y'\in \bar B^\cc, \;y\in \bar A.
$$ 
Integrating over $y$  w.r.t.\ $\bar \pi_{N,\alpha}(\cdot | \bar A)$  in the above inequality one finds 
$$
\bar\varphi(y')\leq \frac{\bar\mu(\bar A)}{\bar \pi_{N,\alpha}(\bar A)}=\frac{\mu(A)}{\pi_{N,\alpha}(A)}\leq 1+a\,,\qquad  y'\in \bar B^\cc,
$$
where we have used the assumption $\mu(A)\leq (1+a)\pi_{N,\alpha}(A)$. In conclusion, 
\begin{align*}
\|\bar\mu-\bar \pi_{N,\alpha}\|_{TV} &= \int\left(\bar\varphi(y)-1\right)_+\bar \pi_{N,\alpha}(dy)\\&\leq  \int_{\bar B}\bar\varphi(y)\bar \pi_{N,\alpha}(dy)+ \int_{\bar B^\cc}\left(\bar\varphi(y')-1\right)_+\bar \pi_{N,\alpha}(dy')\\&
\leq \bar\mu(\bar B) + a \,\bar \pi_{N,\alpha}(\bar B^\cc) = \mu(B) + a \, \pi_{N,\alpha}(B^\cc) .
\end{align*}
Using \eqref{Eq:contr5} we obtain \eqref{Eq:contr4}.

Finally, we need a lower bound on the probability  $\pi_{N,\alpha}(A)$ and an upper bound on the probability $\pi_{N,\alpha}(B)$. At equilibrium $w_i$ is approximately normal with mean zero and variance $iN/K$, and therefore $$b_1(\gl,K)\leq \pi_{N,\alpha}(A_i)\leq b_2(\gl,K)\, $$ 
for all $i=1,\dots,K-1$, $N\geq 2$, for some constants $b_2(\gl,K)\to 0$ as $\gl\to\infty$ and $b_1(\gl,K)>0$ for all $\gl>0$. 
As for the lower bound on $\pi_{N,\alpha}(A)$ observe that by the FKG inequality one has
$$
\pi_{N,\alpha}(A)\geq \prod_{i=1}^{K-1}\pi_{N,\alpha}(A_i)\geq b_1(\gl,K)^{K-1}.
$$
On the other hand, 
$$
\pi_{N,\alpha}(B)\leq \sum_{i=1}^{K-1}\pi_{N,\alpha}(A_i)\leq (K-1)b_2(\gl,K).
$$
Then, for any $\gep>0$, any fixed $K$, taking $\gl$ large enough, we find that there exists a constant $\gd_1=\gd_1(\gep,K)>0$ such that 
\begin{equation}\label{Eq:contr6}
\gd_1\leq \pi_{N,\alpha}(A)\leq \pi_{N,\alpha}(B)\leq \frac\gep2.
\end{equation}
Once we have  \eqref{Eq:contr2}, \eqref{Eq:contr4} and \eqref{Eq:contr6} we can conclude as follows.
Suppose that $\mu(W)\leq \eta\sqrt N$. If $a = 2\eta/\gl\gd_1$, then by \eqref{Eq:contr2} and \eqref{Eq:contr6} one must have $\mu(A)\leq (1+a) \pi_{N,\alpha}(A)$. 
Therefore by \eqref{Eq:contr4} and \eqref{Eq:contr6}
$$
\|\bar\mu-\bar \pi_{N,\alpha}\|_{TV}\leq \frac{2\eta}{\gl\gd_1}+\frac\gep2.
$$
The desired conclusion follows by taking $\eta=\gep\gl\gd_1/4$.
\end{proof}
Next, we address the problem of controlling the expected value of $W$ at time $t$ when started from the maximal configuration. 
 \begin{proposition}\label{prop:Wt}
For any $k=1,\dots,N-1$, any $t\geq 0$:
$$
\bbE\left[X^\wedge_k(t)\right]\leq k + 2Ne^{-\gap_Nt}.
$$
In particular, if $\mu_t=\gd_\wedge P_t$, then for all $t\geq 0$: 
\begin{equation}\label{Eq:contrW1}
\mu_t(W)\leq 2KN e^{-\gap_Nt}.
\end{equation}
\end{proposition}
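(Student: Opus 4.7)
The plan is to recognize that $a(t,k) := \bbE[X_k^\wedge(t)]$ satisfies a discrete heat equation with fixed boundary data, and then to extract pointwise exponential decay for its deviation from the stationary profile $k \mapsto k$.

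From \eqref{genlinear} and linearity of expectation, for $k \in \lint 1,N-1\rint$ one has $\partial_t a(t,k) = \tfrac{1}{2}\Delta a(t,k)$, with $a(t,0)=0$, $a(t,N) = N$ and $a(0,k) = N$. The affine function $k\mapsto k$ is stationary for this equation and carries the same boundary values, so $b(t,k) := a(t,k) - k$ solves $\partial_t b = \tfrac{1}{2}\Delta b$ with homogeneous Dirichlet boundary conditions $b(t,0) = b(t,N) = 0$ and nonnegative initial data $b(0,k) = N - k \in [0, N]$. The second assertion of the proposition then follows immediately by linearity: $\mu_t(W) = \sum_{i=1}^{K-1} b(t, u_i) \leq (K-1) \cdot 2Ne^{-\gap_N t} \leq 2KN e^{-\gap_N t}$.

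To prove the pointwise bound $b(t,k) \leq 2Ne^{-\gap_N t}$, I would treat two time regimes. For $\gap_N t \leq \log 2$, the discrete maximum principle (at an interior spatial maximum of $b(t,\cdot)$, $\Delta b \leq 0$ forces $\partial_t b \leq 0$) yields $b(t,k) \leq \|b(0,\cdot)\|_\infty \leq N \leq 2Ne^{-\gap_N t}$, since $e^{-\gap_N t} \geq 1/2$ in this range. For $\gap_N t > \log 2$, I would expand $b(t,\cdot)$ in the $L^2$-orthonormal Dirichlet eigenbasis $\phi_j(k) = \sqrt{2/N}\sin(\pi j k/N)$ with eigenvalues $\lambda_N^{(j)}$, using the computation $\sum_{k=1}^{N-1}(N-k)\sin(\pi jk/N) = (N/2)\cot(\pi j/(2N))$ to obtain the explicit formula
\begin{equation*}
b(t,k) \;=\; \sum_{j=1}^{N-1}\cot\!\left(\frac{\pi j}{2N}\right) e^{-\lambda_N^{(j)} t} \sin\!\left(\frac{\pi j k}{N}\right).
\end{equation*}
The leading term $j=1$ is at most $\cot(\pi/(2N))e^{-\gap_N t} \leq (2N/\pi)e^{-\gap_N t}$, using $\cot(x) \leq 1/x$ and $|\sin| \leq 1$. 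The contribution of modes $j\geq 2$ decays at the strictly faster rate $\lambda_N^{(2)} \geq 3\gap_N$ (valid for $N\geq 3$), and this extra factor $e^{-(\lambda_N^{(2)}-\gap_N)t} \leq e^{-2\log 2}$ in the relevant regime, combined with an $L^2$ control of the remainder, is used to show that the modes $j\geq 2$ contribute at most $(2-2/\pi)N e^{-\gap_N t}$.

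The main obstacle is producing a sharp $L^\infty$ bound on the high-mode remainder with the stated constant: a naive triangle inequality gives $\sum_j |c_j|  = O(N\log N)$ and straightforward $L^2 \to L^\infty$ passage costs a factor $\sqrt{N}$. Recovering the linear constant $2N$ thus requires a genuine smoothing argument, e.g.\ invoking an ultracontractive estimate of the form $P_t^{(D)}(k,k')\leq C e^{-\gap_N t}/\sqrt{t}$ after a short warm-up, which restores the $\sqrt N$-improvement while preserving the exponential decay from the spectral gap.
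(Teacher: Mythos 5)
Your setup (the discrete heat equation for $a(t,k)$, the Dirichlet sine basis, the split at $e^{-\gap_N t}=1/2$, and the reduction of \eqref{Eq:contrW1} to the pointwise bound) is the same as the paper's, and your exact coefficient identity $\sum_{k=1}^{N-1}(N-k)\sin(\pi jk/N)=\tfrac N2\cot(\pi j/(2N))$ is correct. But the proof is not complete: the asserted bound $(2-2/\pi)Ne^{-\gap_N t}$ for the modes $j\ge 2$ is never established, and your final paragraph defers to an ``ultracontractive estimate'' that is neither stated precisely nor proved. As written, this is a genuine gap, not a routine detail. (Also, in the regime $\gap_N t\le\log 2$ you do not need a maximum principle: $X_k^\wedge(t)\le N$ deterministically, so $b(t,k)\le N-k\le N$.)

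Moreover, the obstacle you describe is a misdiagnosis caused by discarding the time decay factors before summing over modes (``$\sum_j|c_j|=O(N\log N)$''). Keeping them, the elementary eigenvalue comparison $\lambda_N^{(j)}\ge j\,\gap_N$ for $1\le j\le N-1$ turns the mode sum into a geometric series: in the regime $e^{-\gap_N t}\le 1/2$,
\[
|b(t,k)|\;\le\;\sum_{j=1}^{N-1}\cot\Big(\frac{\pi j}{2N}\Big)e^{-\lambda_N^{(j)}t}
\;\le\;\frac{2N}{\pi}\sum_{j\ge 1}\frac1j\,e^{-j\gap_N t}
\;\le\;\frac{2N}{\pi}\,\frac{e^{-\gap_N t}}{1-e^{-\gap_N t}}
\;\le\;\frac{4N}{\pi}\,e^{-\gap_N t},
\]
which is even better than the stated $2Ne^{-\gap_N t}$; no smoothing argument or $L^2\to L^\infty$ passage is needed. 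This is exactly how the paper's proof goes, except that it does not even require your exact coefficients: it uses the crude Cauchy--Schwarz bound $|a_j(0)|\le N^{3/2}/\sqrt2$ together with $\|\varphi_j\|_\infty\le\sqrt{2/N}$, so that each mode contributes at most $Ne^{-\lambda_N^{(j)}t}$, and then the same inequality $\lambda_N^{(j)}\ge j\gap_N$ yields $v_k(t)\le Ne^{-\gap_N t}/(1-e^{-\gap_N t})\le 2Ne^{-\gap_N t}$. Incorporating this one observation would close your gap and make your argument essentially identical to (indeed slightly sharper than) the paper's.
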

\begin{proof}
Defining 
$$
\varphi_j(k) = \sqrt{\frac2N}\sin\left(\frac{ jk \pi}{N}\right)\,,
$$
one has for all  $j,k=1,\dots,N-1$:
\begin{equation}\label{Eq:diagD}
\Delta \varphi_j (k) = \varphi_j (k+1) + \varphi_j (k-1)-2\varphi_j (k) = -2\gl_j \varphi_j (k)\,,
\end{equation}
where 
\begin{equation}\label{Eq:diagD2}
\gl_j = 1-\cos\left(\frac{ j \pi}{N}\right)\,.
\end{equation}
Set $v(t)=(v_1(t),\dots,v_{N-1}(t))$, where $$v_k(t)=\bbE\left[X^\wedge_k(t)\right] - k.$$
Expanding the vector $v(t)$ in the orthonormal basis $\varphi_j$, $j=1,\dots,N-1$, one finds 
$v_k(t) = \sum_{j=1}^{N-1} a_j(t)  \varphi_j(k)$, where 
$a_j(t)
=\sum_{k=1}^{N-1}\varphi_j(k)v_k(t)$. Using \eqref{genlinear} one finds
$$
a_j(t) = a_j(0)e^{-\gl_j t}\,,\qquad a_j(0) =\sum_{k=1}^{N-1}
\varphi_j(k)(N-k).
$$
In particular, $|a_j(0)|\leq N^{3/2}/ \sqrt 2$, and
$$
v_k(t)\leq N\sum_{j=1}^{N-1}e^{-\gl_j t}\,.
$$
Using $\gl_j \geq j\gl_1$ it follows that
$$
v_k(t)\leq \frac{Ne^{-\gl_1 t}}{1-e^{-\gl_1 t}}.
$$ 
If $t$ is such that $e^{-\gl_1 t}\leq 1/2$ then this implies $v_k(t)\leq 2Ne^{-\gl_1 t}$. On the other hand if 
$e^{-\gl_1 t}\geq 1/2$ then clearly $v_k(t)\leq N\leq 2Ne^{-\gl_1 t}$. Since $\gl_1=\gap_N$, this proves the desired upper bound. 
\end{proof}

Next we want to use the bound in Proposition \ref{prop:Wt}  to obtain, via Proposition \ref{prop:muW}, the desired control on the convergence of special particles. However, again a technical problem arises due to the fact that $\mu_t= \gd_\wedge P_t$ has no density w.r.t.\ equilibrium. We overcome this by showing that the singular part of $ \mu_t$ has very small mass if $t$ is large. 

\begin{lemma}\label{lem:mutn}
For any $t\geq 0$, the measure $\mu_t= \gd_\wedge P_t$ satisfies 
 \begin{equation}\label{Eq:decomp}
\mu_t = (1-\gamma_{t,N})\mu'_t + \gamma_{t,N}\mu_{t,N}\,,
\end{equation}
where $\mu_{t,N}\in\cS_N$,  and $\gamma_{t,N}\geq 1- Ne^{-{t/N}}$, for some probability measure $\mu'_t$. 
\end{lemma}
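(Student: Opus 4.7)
The plan is to condition on the realization of the Poisson clocks in the graphical construction (Proposition \ref{grandgradient}) and to identify a high-probability event on which the corresponding conditional law already lies in $\cS_N$. Given a realization $\omega$ of the rings of $(\cT^{(j)})_{j\in\lint 1,N-1\rint}$ on $[0,t]$, producing a finite sequence of update positions $z(\omega)=(z_1,\dots,z_n)$, integrating out the Beta resampling variables shows that the conditional distribution of $\bX^\wedge(t)$ given $\omega$ is
$$\mu_\omega := \delta_\wedge\, Q_{z_1}\cdots Q_{z_n}\;,$$
so that $\mu_t=\bbE[\mu_\omega]$. By Lemma \ref{lem:muscheme}, $\mu_\omega\in\cS$ for every $\omega$.

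The key structural observation, read off from the proof of Lemma \ref{lem:muqi}, is that the operator $Q_i$ acts as the identity on $\cS_k$ when $i>k$, sends $\cS_k$ into itself when $i<k$, and maps $\cS_k$ into $\cS_{k+1}$ when $i=k$. Starting from $\delta_\wedge\in\cS_1$ and applying these rules inductively along the sequence $z(\omega)$, one obtains $\mu_\omega\in\cS_N$ as soon as $z(\omega)$ contains $(1,2,\dots,N-1)$ as an ordered (though not necessarily consecutive) subsequence. Call this event $\cE$.

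To lower bound $\bbP(\cE)$, I would partition $[0,t]$ into $N$ consecutive subintervals $I_1,\dots,I_N$ of length $t/N$ and observe that $\cE$ contains the event that, for every $j\in\lint 1,N-1\rint$, the clock at position $j$ rings at least once during $I_j$. Since the $N-1$ clocks are independent rate-$1$ Poisson processes, the probability that clock $j$ does not ring during $I_j$ is $e^{-t/N}$, and a union bound yields
$$\bbP(\cE^c)\leq (N-1)\,e^{-t/N}\leq N\,e^{-t/N}\;.$$

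Setting $\gamma_{t,N}:=\bbP(\cE)$ and defining $\mu_{t,N}:=\bbE[\mu_\omega\mid \cE]$ and $\mu'_t:=\bbE[\mu_\omega\mid \cE^c]$ immediately produces the decomposition $\mu_t=(1-\gamma_{t,N})\mu'_t+\gamma_{t,N}\mu_{t,N}$. Since $\cS_N$ is stable under convex combinations (a mixture of increasing densities w.r.t.\ $\pi_{N,\alpha}$ still has an increasing density), $\mu_{t,N}\in\cS_N$. The only mild subtlety I expect is keeping track of the fact that $\mu_\omega$ depends only on $\omega$ while the event $\cE$ is $\sigma(\omega)$-measurable, so that conditioning on $\cE$ commutes with the averaging over the Beta resampling variables.
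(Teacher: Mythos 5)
Your proof is correct and follows essentially the same route as the paper: condition on the Poisson clocks, note (via the mapping properties of $Q_i$ from Lemma \ref{lem:muqi}) that a realization containing the sweep $(1,2,\dots,N-1)$ as a subsequence lands the conditional law in $\cS_N$, and bound the probability of the complementary event by splitting $[0,t]$ into subintervals. The only (harmless) presentational difference is that you build the decomposition directly by conditioning on $\cE$ versus $\cE^c$, whereas the paper first invokes Lemma \ref{cor:censor} to write $\mu_t\in\cS$ and then bounds the coefficient $\gamma_{t,N}$ by $\bbP(E_t)$.
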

\begin{proof}
We know from Lemma \ref{cor:censor} that $\mu_t\in\cS$ for all $t\geq 0$. Thus
$$
\mu_t = (1-\gamma_{t,N})\mu'_t + \gamma_{t,N}\mu_{t,N},
$$
with $\mu_{t,N}\in\cS_N$ for some coefficients $ \gamma_{t,N}$, and some singular measure $\mu'_t$.
It remains to show that $\gamma_{t,N}\geq 1- Ne^{-{t/N}}$.
By conditioning on the realization $\cT_t$ of the Poisson clocks $\cT^{(j)}$, $j\in\lint 1,N-1\rint $ up to time $t$ in the graphical construction, one has that the distribution of the particles at time $t$ is of  the form $\mu^z$, for some sequence of updates $z$; see \eqref{Eq:muz}. In the proof of Lemma \ref{lem:muqi} we have seen that if $\mu\in \cS_k$, then $\mu Q_k\in \cS_{k+1}$, and therefore if the sequence $z$ contains the full sweep $(1,2,\dots,N-1)$ as a subsequence, then $\mu^z\in\cS_N$. Let $E_t$ denote the event that $z$ contains $(1,2,\dots,N-1)$ as a subsequence. Since $\cS_N$ is stable under convex combinations, 
taking the expectation over $\cT_t$ one finds that $ \gamma_{t,N} \geq\bbP(E_t)$. A rough lower bound  on the latter can be obtained by dividing the interval $[0,t]$ in $N-1$ intervals and by requiring that for each $i$ the clock of particle with label $i$ rings during the $i$-th time interval.    This shows that $\bbP(E_t)\geq 1- Ne^{-{t/N}}$.
\end{proof}

We are ready to accomplish the first and most delicate step in the program outlined at the beginning of this section.   
\begin{proposition}\label{prop:special}
Fix $\ga\geq 1$ and $K\in\bbN$. Let $\mu_t= \gd_\wedge P_t$ and let $\bar \mu_t$ denote the marginal of $\mu_t$ on the special particle positions $Y_i(t), i=1,\dots,K-1$. If $\bar\pi_{N,\alpha}$ denotes the corresponding equilibrium distribution, for any fixed $\gd>0$, with $t_\gd = (1+\gd)\frac{\log N}{2\gap_N}$ one has
\begin{equation}\label{Eq:top1}
\lim_{N\to\infty}\|\bar \mu_{t_\gd} - \bar\pi_{N,\alpha}\|_{TV} = 0. 
\end{equation}
\end{proposition}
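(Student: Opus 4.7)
The plan is to combine the three ingredients already prepared: the expectation bound of Proposition \ref{prop:Wt}, the quantitative FKG-based criterion of Proposition \ref{prop:muW}, and the singular-part control of Lemma \ref{lem:mutn}. The main obstruction is that $\mu_t = \gd_\wedge P_t$ is not absolutely continuous with respect to $\pi_{N,\alpha}$ — so Proposition \ref{prop:muW} cannot be applied directly — but Lemma \ref{lem:mutn} tells us exactly how to decompose $\mu_t$ into a tiny singular remainder plus an element of $\cS_N$, and the strategy is to transfer the expectation bound to the absolutely continuous component before invoking the FKG criterion on it.

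First I invoke Lemma \ref{lem:mutn} at time $t=t_\gd$, writing $\mu_{t_\gd} = (1-\gga_N)\mu'_{t_\gd} + \gga_N\,\mu_{t_\gd,N}$ with $\mu_{t_\gd,N}\in\cS_N$ and $1-\gga_N \leq N e^{-t_\gd/N}$. Since $t_\gd$ is of order $N^2\log N$ while $\gap_N\sim \pi^2/(2N^2)$, the ratio $t_\gd/N$ grows much faster than any power of $\log N$, and so $1-\gga_N$ vanishes super-polynomially. In particular $\|\bar\mu_{t_\gd}-\bar\mu_{t_\gd,N}\|_{TV}\leq 1-\gga_N\to 0$, reducing the problem to showing that $\bar\mu_{t_\gd,N}$ converges to $\bar\pi_{N,\alpha}$.

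Next I push the expectation bound through the decomposition. Proposition \ref{prop:Wt} gives $\mu_{t_\gd}(W) \leq 2KN e^{-\gap_N t_\gd} = 2K N^{(1-\gd)/2}$, by the definition of $t_\gd$. Using the crude deterministic bound $|W|\leq KN$ on the whole configuration space, the singular contribution $(1-\gga_N)\mu'_{t_\gd}(W)$ is at most $K N^2 e^{-t_\gd/N}$, which is super-polynomially small. Rearranging the decomposition then yields $\mu_{t_\gd,N}(W) \leq 3K N^{(1-\gd)/2}$ for all $N$ large. Since $\gd>0$ is fixed this quantity is $o(\sqrt{N})$, so for each fixed $K$ and any $\gep>0$ the threshold $\eta(K,\gep)\sqrt{N}$ of Proposition \ref{prop:muW} is eventually beaten.

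Finally, Proposition \ref{prop:muW} applied to $\mu_{t_\gd,N}\in\cS_N$ gives $\|\bar\mu_{t_\gd,N}-\bar\pi_{N,\alpha}\|_{TV}\leq \gep$ for all $N$ large, and combining this with the first step via the triangle inequality yields $\limsup_{N\to\infty}\|\bar\mu_{t_\gd}-\bar\pi_{N,\alpha}\|_{TV}\leq \gep$; since $\gep>0$ is arbitrary the limit is zero. The one substantive point to verify carefully is the transfer step: although only a crude $O(N)$ bound is available for $\mu'_{t_\gd}(W)$, the super-fast decay $N e^{-t_\gd/N}$ of the singular mass nevertheless keeps the singular correction to $\mu_{t_\gd,N}(W)$ well below the leading term $N^{(1-\gd)/2}$, which is why the sharp rate recorded in Lemma \ref{lem:mutn} is precisely what we need.
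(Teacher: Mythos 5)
Your proposal is correct and follows essentially the same route as the paper: decompose $\mu_{t_\gd}$ via Lemma \ref{lem:mutn}, transfer the bound $\mu_{t_\gd}(W)\le 2KN^{(1-\gd)/2}$ of Proposition \ref{prop:Wt} to the component $\mu_{t_\gd,N}\in\cS_N$, and conclude with Proposition \ref{prop:muW}. The only (harmless) difference is that you control the singular contribution by the crude deterministic bound $|W|\le KN$ times the super-polynomially small mass $Ne^{-t_\gd/N}$, whereas the paper implicitly uses $\mu'_{t_\gd}(W)\ge 0$ to write $\mu_{t_\gd,N}(W)\le 2\mu_{t_\gd}(W)$ — your variant is, if anything, slightly more self-contained.
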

\begin{proof}
From Lemma \ref{lem:mutn} it follows that
$$
\|\bar \mu_{t_\gd} - \bar\pi_{N,\alpha}\|_{TV}\leq Ne^{-t_\gd/N} + \|\bar \mu_{t_\gd,N} - \bar\pi_{N,\alpha}\|_{TV}
$$
where $\mu_{t_\gd,N} \in\cS_N$. Since $Ne^{-t_\gd/N}\to 0$, $N\to\infty$, from Proposition \ref{prop:muW}, \eqref{Eq:top1} follows if we show that 
\begin{equation}\label{Eq:top2}
\lim_{N\to\infty}N^{-\frac12}\mu_{t_\gd,N}(W) = 0. 
\end{equation}
By Proposition \ref{prop:Wt} we know that 
$$
0\leq \mu_{t_\gd}(W)  
 \leq 2KN^{\frac12(1-\gd)}\,.
$$ 
On the other hand, Lemma \ref{lem:mutn} shows that 
\begin{align*}
0\leq \mu_{t_\gd,N}(W) & =-\gamma_{t_\gd,N}^{-1}(1-\gamma_{t_\gd,N})\mu'_{t_\gd}(W) + \gamma_{t_\gd,N}^{-1}\mu_{t_\gd}(W)\\& 
 \leq 2\mu_{t_\gd}(W)=O\left(N^{\frac12(1-\gd)}\right)\,.
\end{align*}
\end{proof}
\subsection{Relaxation of the censored dynamics}\label{sec:censoreddyn}
Here we establish the second step in the proof of Proposition \ref{Prop:AbsCont}.
Consider the censored process obtained by suppressing all updates of the special particles. In other words, we use the censoring scheme $\cC$ such that $\cC(s)=\{u_1,\dots,u_{K-1}\}$, $s\geq 0$.
\begin{proposition}\label{prop:special2}
Fix $\ga> 0$. Let $P_{t,\cC}^x=\gd_x P_{t,\cC}$ and let $\pi_{N,\alpha}(\cdot|y)$ denote the equilibrium distribution given the special particle positions $y_i=x_{u_i}, i=1,\dots,K-1$. For any $\gd>0$ small enough,  setting $K=\lfloor \gd^{-1}\rfloor$, and $s_\gd = \gd\frac{\log N}{2\gap_N}$, one has, for $N$ sufficiently large
\begin{equation}\label{Eq:top10}
\sup_{x\in\gO_N}\| P_{s_\gd,\cC}^x- \pi_{N,\alpha}(\cdot|y)\|_{TV} \leq  \delta\;.
\end{equation}
\end{proposition}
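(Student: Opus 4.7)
The strategy is to decompose the censored dynamics into independent sub-chains and apply the rough upper bound \eqref{Eq:roughUB} (proved for general $\alpha>0$ in Appendix~\ref{App:roughUB}) to each.

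\textbf{Step 1 (Decomposition).} Write $u_0:=0$ and $u_K:=N$, so that the censored dynamics freezes the positions $y_j=x_{u_j}$ for $j=1,\dots,K-1$. For each $j\in\lint 1,K\rint$, the positions of the particles with indices in $\lint u_{j-1}+1,u_j-1\rint$ evolve only through updates whose left/right neighbours are in $\lint u_{j-1},u_j\rint$, and therefore the $K$ sub-processes are independent. On the other hand, the product structure of the density \eqref{Eq:pinalpha} shows that, conditionally on $y=(y_1,\dots,y_{K-1})$, the measure $\pi_{N,\alpha}(\cdot\,|\,y)$ factorises as the product over $j\in\lint 1,K\rint$ of the conditional laws on the sub-intervals $[y_{j-1},y_j]$. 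Each such conditional law is, after the affine rescaling that maps $[y_{j-1},y_j]$ to $[0,N_j]$ with $N_j:=u_j-u_{j-1}$, exactly the equilibrium $\pi_{N_j,\alpha}$ of the adjacent walk on $N_j$ particles. Moreover the $j$-th censored sub-dynamics, after the same rescaling, is precisely the adjacent walk with generator $\cL_{N_j,\alpha}$ (up to an overall time change by the squared rescaling factor, which is irrelevant for the total variation distance).

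\textbf{Step 2 (Reduction to the rough upper bound).} Thus, by the triangular inequality and the tensor structure,
\begin{equation*}
\| P_{s_\gd,\cC}^x- \pi_{N,\alpha}(\cdot|y)\|_{TV} \leq \sum_{j=1}^K d_{N_j,\alpha}(s_\gd)\,,
\end{equation*}
where $d_{N',\alpha}(t)$ denotes the worst-case total variation distance to equilibrium after time $t$ for the adjacent walk on $N'$ particles with parameter $\alpha$. Note that $N_j\leq N/K+1\leq 2N\gd$ for all $j$, provided $\gd$ is small enough.

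\textbf{Step 3 (Submultiplicativity boost).} By \eqref{Eq:roughUB} there exists $C=C(\alpha)>0$ with $d_{N',\alpha}(C N'^2\log N')\leq 1/4$ for all $N'\geq 2$. Standard submultiplicativity for reversible chains, $d(s+t)\leq 2\,d(s)d(t)$, gives $d_{N',\alpha}(k\cdot C N'^2\log N')\leq 2^{-k-1}$. Consequently, there exists $C'=C'(\alpha)>0$ such that for every $\gep'\in(0,1)$,
\begin{equation*}
d_{N',\alpha}\bigl(C'\, N'^2\,\log N'\,\log(1/\gep')\bigr)\leq \gep'\,.
\end{equation*}
Applying this with $\gep'=\gd/K=\gd^2$ (up to rounding) and $N'\leq 2N\gd$ yields $d_{N_j,\alpha}(T_\gd)\leq \gd/K$ with
\begin{equation*}
T_\gd \;:=\; C'\,(2N\gd)^2\,\log(2N\gd)\,\log(K/\gd) \;\leq\; C''\,N^2\gd^2\log N\,\log(1/\gd)\,.
\end{equation*}

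\textbf{Step 4 (Conclusion).} Since $s_\gd = \gd\,\tfrac{\log N}{2\gap_N}\sim \gd N^2\log N/\pi^2$ as $N\to\infty$, for any $\gd$ smaller than some $\gd_0(\alpha)$ we have $T_\gd\leq s_\gd$ for all $N$ large enough, and therefore by monotonicity of $d_{N_j,\alpha}$ in time and the bound of Step~2,
\begin{equation*}
\|P_{s_\gd,\cC}^x-\pi_{N,\alpha}(\cdot|y)\|_{TV}\leq K\cdot (\gd/K)=\gd\,,
\end{equation*}
uniformly in $x\in\gO_N$. The smallness condition on $\gd$ is harmless since the proposition is only needed for $\gd$ small.

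\emph{Main obstacle.} The only delicate point is to verify cleanly that the conditional measure $\pi_{N,\alpha}(\cdot|y)$ really factorises into the rescaled equilibria $\pi_{N_j,\alpha}$, which rests on the fact that the density \eqref{Eq:pinalpha} depends on $x$ only through the increments $x_i-x_{i-1}$; once this is established everything else is standard.
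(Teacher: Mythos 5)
Your proposal is correct and takes essentially the same route as the paper: decompose the censored dynamics into $K$ independent adjacent walks on blocks of size roughly $N/K\approx \gd N$, bound the total variation distance of the product by the sum over blocks, and feed in the rough upper bound \eqref{eq:roughUB} (the paper applies that bound directly at accuracy $\gep=\gd/K$, whereas you boost the $1/4$-mixing bound by submultiplicativity, which only costs a harmless $\log(1/\gd)$ factor absorbed by $\gd\log(1/\gd)\to 0$). One small correction: after the affine rescaling the block dynamics is \emph{exactly} the adjacent walk with generator $\cL_{N_j,\alpha}$ with no time change whatsoever (the Beta resampling is scale invariant), so your parenthetical about a ``time change by the squared rescaling factor'' should be deleted --- had such a $y$-dependent time change really been present, it would not have been irrelevant for the mixing estimate, since it would alter the amount of time each block has to relax.
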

\begin{proof}


By construction, the censored process corresponds to the product of $K$ independent adjacent walks each on the $n$-simplex, with $n:=\lfloor N/K\rfloor$. 
The bound \eqref{eq:roughUB} implies that when $N$ is sufficiently large, the mixing time of a system of size $n$ satisfies $T_n(\gep)\leq C n^2\log n$ for any given $\gep\in (0,1)$ if $n$ is large enough. Therefore,  
$$T_n( K^{-1} \delta)\le C \frac{N^2}{K^2}\log (N/K) \le  s_{\gd}\;,
$$
for $\delta > 0$ small enough. Thus if $d_N(t)$ is the distance defined in  
\eqref{defdn}, then 
$$
\sup_{x\in\gO_N}\| P_{s_\gd,\cC}^x- \pi_{N,\alpha}(\cdot|y)\|_{TV} \leq Kd_{n}(s_\gd)\le \delta\;,
$$
as required.
%
\end{proof}

\subsection{Proof of Proposition \ref{Prop:AbsCont}}\label{sec:proofAbsCont}
With the previous results at hand it is relatively simple to conclude the proof of the desired estimate. We formulate the result as follows. Recall the notation $ t_\delta=(1+\gd)\frac{\log N}{2\gap_N}$. 
\begin{proposition}\label{Prop:AbsCont*}
Fix $\ga\geq 1$. For any $\gd>0$, 
$$ \lim_{N\to\infty}\| P_{t_\gd}^\wedge - \pi_{N,\alpha} \|_{TV} =0\;.$$
\end{proposition}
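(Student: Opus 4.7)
The plan is to assemble the three ingredients already built earlier in the section: the censoring inequality (Lemma \ref{cor:censor}), the equilibration of the special particles (Proposition \ref{prop:special}), and the relaxation of the censored product dynamics (Proposition \ref{prop:special2}). In order to obtain a limit equal to zero, rather than the fixed bound $\gd/2$ that a naive split $t_\gd=t_{\gd/2}+s_{\gd/2}$ would produce, I would keep an auxiliary small parameter $a\in(0,\gd)$ and only let $a\downarrow 0$ after $N\to\infty$.

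Concretely I would split the time budget as $t_\gd = t_{\gd-a}+s_a$, where $s_a = a\,\tfrac{\log N}{2\gap_N}$, and choose $K=\lfloor a^{-1}\rfloor$ special particles. The first step is to apply the censoring inequality to the scheme $\cC$ that allows every update on $[0,t_{\gd-a}]$ and blocks every update of the $K-1$ special sites $u_1,\dots,u_{K-1}$ during $[t_{\gd-a},t_\gd]$. Since $\delta_\wedge\in\cS_1\subset\cS$, Lemma \ref{cor:censor} yields
$$
\|P^\wedge_{t_\gd}-\pi_{N,\alpha}\|_{TV}\;\leq\;\|\delta_\wedge P_{t_\gd,\cC}-\pi_{N,\alpha}\|_{TV}.
$$

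The second step is to decompose the right-hand side by conditioning on the vector $y=(x_{u_1},\dots,x_{u_{K-1}})$ of special-particle positions. Because these coordinates are frozen on $[t_{\gd-a},t_\gd]$, the $y$-marginal of $\delta_\wedge P_{t_\gd,\cC}$ is exactly the $y$-marginal $\bar\mu_{t_{\gd-a}}$ of $\delta_\wedge P_{t_{\gd-a}}$, while the conditional law given $y$ is the image of the corresponding conditional $\nu_y$ of $\delta_\wedge P_{t_{\gd-a}}$ under the censored semigroup for time $s_a$, which factorises into $K$ independent adjacent walks on the sub-simplices cut out by $y$. Using the standard marginal/conditional bound
$$
\|\mu-\pi_{N,\alpha}\|_{TV}\;\leq\;\|\bar\mu-\bar\pi_{N,\alpha}\|_{TV}\;+\;\sup_y\|\mu(\cdot\mid y)-\pi_{N,\alpha}(\cdot\mid y)\|_{TV},
$$
I would control the first term by Proposition \ref{prop:special} applied with the fixed integer $K=\lfloor a^{-1}\rfloor$ and with $\gd-a$ in place of $\gd$, which gives $o(1)$ as $N\to\infty$, and the second term uniformly in $y$ by Proposition \ref{prop:special2} applied with parameter $a$, combined with the convexity of TV in the initial law (this lets one replace each pointwise starting point $\delta_x$ by the mixture $\nu_y$ at no cost, since $\nu_y$ is supported on configurations whose special-particle vector is precisely $y$).

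Assembling the two steps yields $\limsup_N\|P^\wedge_{t_\gd}-\pi_{N,\alpha}\|_{TV}\leq a$ for every admissible $a\in(0,\gd)$, and sending $a\downarrow 0$ completes the argument. Since the genuinely delicate work has already been absorbed into the three ingredients just recalled (FKG plus Wilson-type moment bounds behind Proposition \ref{prop:special}, and the extension of the censoring inequality to measures with a singular part carried out in Section \ref{sec:censor}), I do not expect any substantial obstacle in this final assembly; the only point of vigilance is to keep $K=\lfloor a^{-1}\rfloor$ fixed before sending $N\to\infty$, so that Proposition \ref{prop:special} is applied in its regime of validity and so that $s_a$ remains of the right order for Proposition \ref{prop:special2} to apply.
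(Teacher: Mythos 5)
Your proposal is correct and follows essentially the same route as the paper: start from $\delta_\wedge\in\cS$, censor the special particles after an initial uncensored phase via Lemma \ref{cor:censor}, and bound the total variation distance by the marginal error for the special particles (Proposition \ref{prop:special}) plus the uniform conditional error of the censored product dynamics (Proposition \ref{prop:special2}). The only difference is cosmetic bookkeeping: the paper fixes the split $t_\gd=t_{\gd/2}+s_{\gd/2}$ with $K=\lfloor\gd^{-1}\rfloor$, obtains $\limsup_N\le 2\gd$, and then uses monotonicity of the TV distance in time to send the parameter to zero, whereas you keep $\gd$ fixed and introduce an auxiliary parameter $a$ with the split $t_\gd=t_{\gd-a}+s_a$ — both are valid.
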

\begin{proof}
Set $K=\lfloor \gd^{-1}\rfloor$ and let $\cC'$ denote the censoring scheme defined by
$\cC'(s)=\eset$ for $s\in[0, t_{\gd/2})$ and $\cC'(s)=\{u_1,\dots,u_{K-1}\}$ for $s\geq t_{\gd/2}$.
Let also $P^\wedge_{t,*}=\gd_\wedge P_{t,\cC'}$ denote the corresponding censored process. From Lemma \ref{cor:censor} we have
$$
\| P_{t_\gd}^\wedge - \pi_{N,\alpha} \|_{TV} \leq \| P_{t_\gd,*}^\wedge - \pi_{N,\alpha} \|_{TV}.
$$
A coupling of $P_{t_\gd,*}^\wedge$ and $\pi_{N,\alpha}$ can be achieved as follows. Call $\mu_t=\gd_\wedge P_t$. 
Use the optimal coupling attaining the total variation distance $\|\bar \mu_{t_{\gd/2}} - \bar\pi_{N,\alpha}\|_{TV}$
to couple the special particles at time $t_{\gd/2}$. If the special particles are coupled at time $t_{\gd/2}$
and if $x$ is the configuration at that time, then couple the remaining particles at time $t_\gd =t_{\gd/2}+s_{\gd/2}$ with the optimal coupling attaining the total variation distance $\| P_{s_{\gd/2},\cC}^x- \pi_{N,\alpha}(\cdot|y)\|_{TV}$, where $\cC$ is as in Proposition \ref{prop:special2}. This shows that
$$
\| P_{t_\gd}^\wedge - \pi_{N,\alpha} \|_{TV} \leq \|\bar \mu_{t_{\gd/2}} - \bar\pi_{N,\alpha}\|_{TV} + \sup_{x\in\gO_N}\| P_{s_{\gd/2},\cC}^x- \pi_{N,\alpha}(\cdot|y)\|_{TV}.
$$
From Proposition \ref{prop:special} and Proposition \ref{prop:special2}, 
$$
\limsup_{N\to\infty}\| P_{t_\gd}^\wedge - \pi_{N,\alpha} \|_{TV} \leq 2\gd.
$$
The distance $\| P_{t_\gd}^\wedge - \pi_{N,\alpha} \|_{TV}$ is decreasing as a function of $\gd$, and therefore we may take $\gd\to 0$ in the right hand side above to conclude. 
\end{proof}
\begin{remark}\label{rem:mono}
Proposition \ref{Prop:AbsCont*} can be strengthened to obtain that for any $\gd>0$, 
$$ \lim_{N\to\infty}\| \mu P_{t_\gd}- \pi_{N,\alpha} \|_{TV} =0\;.$$
for an arbitrary $\mu\in \cS$. Indeed, for any $\mu\in\cS$ one has $\mu P_t\in\cS$ and $\mu P_t\leq P^\wedge_t$  and therefore the claim follows from Lemma \ref{lem:kfs}.  However, at this point we cannot infer that the same holds for arbitrary initial distribution without the requirement that it belongs to $\cS$.
\end{remark}

\appendix

\section{An extension of the Randall-Winkler upper bound
}\label{App:roughUB}
As already discussed in the previous sections, one of the ingredients of our main results is the upper bound \eqref{Eq:roughUB} on the mixing time that captures the right order of magnitude up to a multiplicative constant, which in the case $\alpha=1$ was obtained by Randall and Winkler~\cite{RW05b}. In this section, we explain how one can extend that bound  to the Beta-resampling case with $\alpha \ge 1$. 
We are going to show more precisely that for any $\gep>0$, if  $N$ is sufficiently large, then  
\begin{equation}\label{eq:roughUB}
T_{N,\alpha}(\gep) \le 5 (\log N) \gap_N^{-1}.
\end{equation}

Let us fix $x\in \gO_N$ and construct a coupling of the two processes $\bX^x$ and 
$\bX^{\eq}$, 
where $\bX^{x}$ has initial state $x$ and 
$\bX^{\eq}$ has initial distribution $\pi_{N,\ga}$. Clearly, 
\begin{equation}
\| P^x_t-\pi_{N,\ga}\|_{TV}\le  \bbP[ \bX^x(t)\ne \bX^{\eq}(t)].
\end{equation}
Our manner of coupling the two processes is time dependent and is as follows: 
\begin{itemize}
                                                                               \item [(A)] For $t\le t_1:= 4 (\log N) \gap_N^{-1}$, we couple the two trajectories using the construction presented in  the proof of Proposition \ref{grandgradient}.
\item [(B)] For $t> t_1$, we couple the two trajectories according to the construction described in Section \ref{thecoupling}, namely we use a coupling that maximizes the probability of sticking at  each update.  
\end{itemize}
We first observe that at time $t_1$ the two processes have come close to one another: 
\begin{equation}\label{soclose}
 \bbE \left[\sum_{k=1}^{N-1}|X_k^x(t_1)- X^{\eq}_k(t_1)| \right] \le N^{-2}.
\end{equation}
To see this, referring to the proof of Proposition \ref{grandgradient}, we consider the two processes $(\bX^{\wedge}(t))_{t\in[0,t_1]}$,
$(\bX^{\vee}(t))_{t\in[0,t_1]}$ obtained by using the same clock processes $\cT$ and update variables $U$ as in the construction of 
$\bX^x(t)$ and  $\bX^{\eq}(t)$. Since this construction is order preserving, we necessarily have 
$$|X_k^x(t_1)- X^{\eq}_k(t_1)|\le X_k^{\wedge}(t_1)-X^{\vee}_k(t_1),$$
and \eqref{soclose} can be proved for $X_k^{\wedge}(t_1)-X^{\vee}_k(t_1)$ repeating the computation leading to \eqref{usingCS}.

\medskip

Now for $t>t_1$, we remark that in contrast with Randall-Winkler's case $\alpha=1$, the step B
described above does not descend from a monotone grand coupling, and thus it is not sufficient to couple $X^{\wedge}_k(t)$ and $X^{\vee}_k(t)$ by time $5\log N\gap_N^{-1}$ to prove the desired mixing time bound. 

Instead, we need to couple $\bX^x(t)$ and  $\bX^{\eq}(t)$ for arbitrary $x$. 
To this end we are going to repeat the recursive argument of Section \ref{sec:conclude} with $X^{\wedge}_k(t)$ replaced by $X^{\eq}_k(t)$. In particular, when we perform the update at a site $k$, 
we have to replace the interval $I^{\wedge}$ (see Section \ref{thecoupling}) by $I^{\eq}$, which we define as  
the resampling interval of $X_k^{\eq}$.
As a consequence of this modification, the supports of the densities $\rho_1$ and $\rho_3$ presented in \eqref{lesrhos} are not necessarily  intervals.  
Define the events 
\begin{gather*} \bar\cA^{(N)}_{1} := \left\{ \sum_{k=1}^{N-1}|X_k^x(t_1)- X^{\eq}_k(t_1)| < N^{-1} \right\},\\ 
\bar \cA^{(N)}_{2}:= \left\{ \min_{k\in \lint 1,N-1\rint, s\in [0, 2\log N]}  \nabla X^{\eq}_k(t_1+s) > \frac{1}{\sqrt{N} \log N} \right\}.
\end{gather*}
From \eqref{soclose} and Markov's inequality it follows that $\bbP(\bar\cA^{(N)}_{1})\to 1$, and from the proof of Lemma \ref{Lemma:BruteForce} it follows that $\bbP(\bar\cA^{(N)}_{2})\to 1$.

As in Section \ref{sec:conclude} we consider the probability of an unsuccessful update in the time interval $[t_1,t_1+2\log N]$. Let $\bar\cA^{(N)}_{3}$ denote the event that each site is updated at least once and no more than $10\log N$ times in this time interval. The probability of $\bar\cA^{(N)}_{3}$ tends to 1 (cf. Section \ref{sec:conclude}). 

Using the upper bound \eqref{I1I2} and recalling Remark \ref{extension}, we see that conditioned on the realization of the update times $(t_i^{(k)})_{i,k}$ in the time interval $[t_1,t_1+2\log N]$, on the event $\bar\cA^{(N)}_{1}\cap \bar\cA^{(N)}_{3}$, the probability that there exists at least one unsuccessful update within $[t_1,t_1+2\log N]$ is bounded above by $C\,N^{-1/2}(\log N)^2+o(1)$. This shows that the probability that $\bX^x(t_1+2\log N)\ne \bX^{\eq}(t_1+2\log N)$ vanishes as $N\to\infty$, which implies \eqref{eq:roughUB}. Note that the constant $5$ in this bound is not optimal. 

Finally, we remark that the proof given above can be extended to the case $\alpha\in(0,1)$, with $5$ replaced by a constant which depends on $\alpha$. However, this requires to prove an inequality that replaces the upper bound of \eqref{I1I2}, namely, that when $\alpha\in (0,1)$ there exists $C_\alpha>0$ such that for any pair of intervals
\begin{equation}\label{alphale1}
 \| \Beta_{\alpha}[l_1,r_1]- \Beta_{\alpha}[l_2,r_2] \|_{TV}\le  C_{\alpha}\left(\frac{\max(|l_2-l_1|, |r_2-r_1|)}{\max(r_1-l_1,r_2-l_2)}\right)^{1/\alpha}.
\end{equation}
We leave the details of the adaptation as well as the proof of \eqref{alphale1} to the interested reader.

\subsection*{Acknowledgements}
H.L. is grateful to Werner Krauth for bringing to his knowledge the work of Randall and Winkler and for a stimulating discussion on the subject. This work was initiated during a stay of P.C. and C.L. at IMPA, and continued during a stay of P.C. and H.L. at Dauphine: we are grateful for hospitality and support provided by these two institutions. H.L.\ also acknowledges support from a productivity grant of CNPq and a JCNE grant from FAPERj. C.L.\ acknowledges support from the grant SINGULAR ANR-16-CE40-0020-01.

\bibliographystyle{Martin}
\bibliography{library}

\end{document}